\definecolor{webgreen}{rgb}{0,.5,0}
\definecolor{webbrown}{rgb}{.8,0,0}
\definecolor{emphcolor}{rgb}{0.5,0.95,0.95}
\DeclarePairedDelimiterX{\innp}[1]{\langle}{\rangle}{#1}
\DeclarePairedDelimiterX{\floor}[1]{\lfloor}{\rfloor}{#1}
\DeclarePairedDelimiterX{\ceil}[1]{\lceil}{\rceil}{#1}
\let\abs\relax\DeclarePairedDelimiterX{\abs}[1]{\lvert}{\rvert}{#1}
\let\norm\relax\DeclarePairedDelimiterX{\norm}[1]{\lVert}{\rVert}{#1}
\DeclareMathOperator{\diag}{diag}
\newcommand{\Id}{\mathsf{I}}
\DeclareMathOperator{\coal}{Coal}
\DeclareMathOperator{\muta}{Dice}
\newcommand {\E}{\mathbb{E}}
\numberwithin{equation}{section}
\newtheorem{theorem}{Theorem}[section]
\newtheorem{proposition}{Proposition}[section]
\newtheorem{remark}{Remark}[section]
\newtheorem{lemma}{Lemma}[section]
\newtheorem{definition}{Definition}[section]
\numberwithin{remark}{section} 
\numberwithin{proposition}{section}
\numberwithin{corollary}{section}
\newcommand{\red}{\textcolor[rgb]{1.00,0.00,0.00}}
\newcommand{\cyan}{\textcolor[rgb]{0.00,0.71,0.92}}
\def\@MRExtract#1 #2!{#1}     
\newcommand{\MR}[1]{
  \xdef\@MRSTRIP{\@MRExtract#1 !}%
  \href{https://mathscinet.ams.org/mathscinet-getitem?mr=\@MRSTRIP}{MR\@MRSTRIP}}
\newcommand{\EMAIL}[1]{\href{mailto:#1}{#1}}
\title{Partially exchangeable Markov chains and characterisation of multitype Lambda-coalescents}
\author{%
    Adri\'an Gonz\'alez Casanova\footnote{School of Mathematics and Statistical Sciences, Arizona State University, Tempe, USA. \EMAIL{agonz591@asu.edu}}
    \and
    Noemi Kurt\footnote{Institut f\"ur Mathematik, Goethe-Universit\"at Frankfurt, Robert-Mayer-Str. 10, 60325 Frankfurt am Main, Germany. \EMAIL{kurt@math.uni-frankfurt.de}}
    \and
    Imanol Nu{\~n}ez\footnote{Department of Probability and Statistics, Centro de Investigaci\'on en Matem\'aticas A.C. Calle Jalisco s/n. C.P. 36240, Guanajuato, Mexico. \EMAIL{imanol.nunez@cimat.mx, jluis.garmendia@cimat.mx}}
    \and
    Jos\'e-Luis P\'erez\footnotemark[3]
}
\date{\today}
\begin{document}

\maketitle

\begin{abstract}
	In this paper, we study consistent and partially exchangeable sequences of Markov chains on a finite state space. We provide a characterisation of the admissible transition rates via  a decomposition into individual and coordinated motion of particles. As a consequence, we find a characterisation of multitype Lambda-coalescents with multiple switches. Moreover, we provide convergence and duality results for the corresponding process of limiting relative frequencies that we call the de Finetti measure process, and discuss a number of examples from the recent literature.
\end{abstract}

\section{Introduction}

Since the seminal work of de Finetti on \emph{exchangeability} (see \cite{definettiTheoryProbabilityCritical2017}), culminating in the celebrated de Finetti representation theorem, the concept of exchangeability has garnered significant attention, both in probability (e.g. \cite{aldousExchangeabilityRelatedTopics1985}) and in Bayesian statistics (e.g. \cite{phadiaPriorProcessesTheir2016}).
In essence, the de Finetti representation theorem states that any (countable) collection of random variables whose distribution is invariant under permutation of finitely many of their indices is conditionally independent given some underlying random measure, which we refer to as the \emph{de Finetti measure}.

While originally formulated for sequences of binary random variables, the representation theorem has since been extended to random variables taking values in Polish spaces---topological spaces that are separable and completely metrizable.
Moreover, the core concept of exchangeability has been generalized in various directions, including the notion of \emph{partial exchangeability} (which was already considered by de Finetti).
We refer the reader to \cite{aldousExchangeabilityRelatedTopics1985,kallenbergProbabilisticSymmetriesInvariance2005} for thorough accounts on this topic.

The representation theorem mentioned in the previous paragraph is often stated in an abstract sense.
As observed by Diaconis and Freedman in \cite{diaconis11FinettisGeneralizations2023,diaconisRecentProgressFinettis1988}, from a Bayesian perspective, such an abstract representation does not allow one to meaningfully specify a prior distribution on the probability measures arising in it.
Thus, one might impose certain restrictions to reduce the space of admissible probability measures.
Nonetheless, properties of specific collections of exchangeable random variables, such as continuous-time Markov chains, remain underexplored.
In this paper, we study a particular system of Markov chains arranged in a partially exchangeable sequence, evolving on a finite state space.
This sequence satisfies a consistency condition: any finite subcollection of chains is again a Markov chain.
One of our main results is a characterization of all such sequences.

We caution the reader, as partially exchangeable sequences of Markov chains share some similarities with random walks that have exchangeable increments as studied by Freedman in \cite{diaconisFinettisTheoremMarkov1980,freedmanFinettisTheoremContinuous1996}.
However, the frameworks differ significantly: Freedman's results concern mixtures of Markov chains yielding a single process, while we analyze sequences of Markov chains.

This type of structure also arises in population genetics, particularly in the study of genealogical processes.
Specifically, random exchangeable partitions play a central role in coalescent theory, underpinning models such as the $\Lambda$-coalescent and the $\Xi$-coalescent \cite{berestyckiRecentProgressCoalescent2009}.
Recently, with the introduction of multitype coalescent processes such as the multitype $\Lambda$-coalescent \cite{johnstonMultitypeLcoalescents2023}, a class of $\Xi$-coalescents in \cite{mohleMultitypeCanningsModels2024} and another in \cite{daipraMultitypeXcoalescentsStructured2025} respectively, and the multitype coalescent in \cite{flammGenealogyMultitypeCannings2025}, partial exchangeability has emerged as a key concept in population genetics.

The framework of partially exchangeable sequences of Markov chains provides a formal model for coordinated mutation, where individuals of various types may undergo simultaneous mutations, with the distribution of the resulting type of each individual determined by their initial type.
On one hand, this extends the by now classical $\Lambda$-coalescent models to the case of coordinated mutation, but more generally gives insight into the interplay of consistency, exchangeability and coordination.
See also \cite{johnstonMultitypeLcoalescents2023}, \cite{gonzalezcasanovaMultitypeLcoalescentsContinuous2024}, \cite{gonzalezcasanovaParticleSystemsCoordination2021}, \cite{flammGenealogyMultitypeCannings2025} and \cite{daipraMultitypeXcoalescentsStructured2025}.

We call the processes we investigate here \emph{dice processes}, because we construct them as a collection of $n\in\mathbb{N}$ Markov chains that evolve on a finite state space of cardinality $d\in \mathbb{N},$ where we imagine many $d$-faceted dice that determine the jumps of the processes according to certain rules.
The precise set of admissible rules in this setup leading to partially exchangeable, consistent processes is characterized in our first main result.

We note that related results have appeared in \cite{carinciConsistentParticleSystems2021}, where a similar class of processes is studied from the perspective of configuration processes, with consistency defined over randomly chosen subsystems.
In contrast, our framework operates at the level of coordinate processes, emphasizing partial exchangeability alongside consistency, allowing deterministic selection of subsystems.
This highlights the broader relevance of the structural principles underlying dice processes.

Once we have characterized dice processes as partially exchangeable and consistent processes, we will use them to model coordinated mutation in genealogical processes.
This leads to an extension of the multitype-$\Lambda$-coalescent introduced in \cite{johnstonMultitypeLcoalescents2023} that now includes genealogies with multiple (or coordinated) switching, such as the seedbank coalescent with multiple switching studied in \cite{Blathetal2020}.
Namely, in our second main result we characterize the multitype coalescents in a \emph{Pitman sense}.
That is, we characterize multitype coalescent processes that are consistent, partially exchangeable, and such that \emph{no other event may take place when a merger occurs}.

It is worth noting that coordinated mutation is implicitly present in the multitype coalescent process presented in \cite{flammGenealogyMultitypeCannings2025}.
They give integral representations for rates governing various kinds of coalescent mechanisms---where multiple coalescent events may occur simultaneously.
In contrast, we provide an integral representation of the mechanism governing coordinated mutation, where no coalescence occurs.

Beyond the setting of genealogies, dice processes also offer a natural framework for modeling the microscopic evolution of mass in certain interacting particle systems.
In particular, we construct exchangeable dice processes whose de Finetti measure processes evolve according to a class of dynamics we refer to as \emph{stochastic exchange models}, adopting the terminology of \cite{kimSpectralGapKMP2025}.
These include the \emph{averaging process} and its various extensions, which have recently drawn considerable attention.

The averaging process was introduced in \cite{aldousLectureAveragingProcess2012} as a simple, yet at the time largely unexplored, example of an interacting particle system evolving on a continuous state space---specifically, the space of probability measures on a finite set.
It is a pure jump Markov process in which, at each jump time, two coordinates are selected at random and the mass assigned to them is redistributed equally.
By construction, the uniform distribution is invariant under the dynamics.

The convergence behavior of the averaging process has been studied in various settings.
In \cite{chatterjeePhaseTransitionRepeated2022}, it was shown that the process exhibits the cut-off phenomenon when the interacting coordinates are selected uniformly at random.
This result was extended in \cite{quattropaniMixingAveragingProcess2023}, where the redistribution follows fixed non-uniform weights.
Further generalizations appear in \cite{caputoRepeatedBlockAverages2024}, where more than two coordinates may be selected and the mass is averaged proportionally to the number of participants; and in \cite{kimSpectralGapKMP2025} and \cite{caputoUniversalCutoffPhenomenon2025}, where the redistribution weights are chosen randomly at each interaction.
An open line of inquiry, which we do not pursue here, is if we can obtain similar results about the convergence to the stationary measures using the dice process.

These examples illustrate the flexibility of the dice process framework, which accommodates not only genealogical dynamics involving coordinated mutation, but also a variety of particle systems governed by certain redistribution rules.
As such, dice processes provide a unified approach to modeling partial exchangeability in both population and particle systems.

\section{Exchangeable and partially exchangeable processes: Model and main results}

\subsection{Notation, exchangeability and consistency}

We start by fixing some notation, and introducing the concepts of exchangeability and consistency.
The sets of non-negative real numbers and non-negative integers are denoted by $\mathbb{R}_+ := [0, \infty)$ and $\mathbb{N}_0 := \mathbb{N} \cup \{0\}$, respectively.
For $n \in \mathbb{N}_0$, the index sets $[n] := \{1, \ldots, n\}$ and $[n]_0 := [n] \cup \{0\}$ are used.

Given $m < n \leq \infty$ and $\eta \in [d]^n$, define $R_m \eta$ as the projection of $\eta$ on $[d]^m$, meaning that $R_m \eta = (\eta_1, \ldots, \eta_m)$.
We may identify $\eta \in [d]^\infty$ with the sequence $(R_m \eta)_{m \in \mathbb{N}} \in \prod_{m \in \mathbb{N}} [d]^m$.
Giving $[d]^m$ the discrete topology and $[d]^\infty$ the topology inherited from the product $\prod_{m \in \mathbb{N}} [d]^m$, $[d]^\infty$ becomes compact and metrizable.
Additionally, we also consider the projection $\pi_m \eta := \eta_m$ for $m \leq n$ that is,  $R_m \eta=(\pi_1 \eta,...,\pi_m\eta)$.
Finally, for a generic sequence $x = (x_i)_{i \in \mathbb{N}}$ we define $x_{\sigma}$ for a finite permutation $\sigma : \mathbb{N} \to \mathbb{N}$ as $x_{\sigma} := (x_{\sigma(i)})_{i \in \mathbb{N}}$.

In this paper we will be interested in sequences $(X^{(n)})_{n \in \mathbb{N}}$ of random processes, with $X^{(n)}=(X^{(n)}(t))_{t\geq 0}$ taking values in  $[d]^n$ for each $n \in \mathbb{N}$, that are \emph{consistent, partially exchangeable,} and furthermore fulfill that \emph{$X^{(n)}$ is a Markov chain on $[d]^n$} for each $n \in \mathbb{N}$.
For fixed $n$ we will write $X_i^{(n)} = \pi_i X^{(n)}$ for the projections on the coordinates, $i\in [n]$.

We first define the concept of exchangeability and partial exchangeability, starting with exchangeability for random variables and random processes.

\begin{definition}
	A sequence $Y = (Y_i)_{i \in \mathbb{N}}$ of random variables with values in a Polish space $E$ is \emph{exchangeable} if and only if $Y_\sigma = (Y_{\sigma(i)})_{i \in \mathbb{N}}$ has the same law as $Y$ for every finite permutation $\sigma : \mathbb{N} \to \mathbb{N}$.
\end{definition}

The concept of exchangeability translates immediately for sequences of random processes with values on a Polish space $E$.

\begin{definition}\label{def:consistent_process}
	A sequence $X = \bigl((X_i(t))_{t \geq 0}\bigr)_{i \in \mathbb{N}}$ of random processes $(X_i(t))_{t\geq 0}$ with Polish state space $E$ is \emph{exchangeable} if and only if $X_\sigma = (X_{\sigma(i)})_{i \in \mathbb{N}}$ has the same distribution as $X$ for every finite permutation $\sigma : \mathbb{N} \to \mathbb{N}$.
\end{definition}

Additionally we will also consider a version of \emph{partial exchangeability}.
To this end, we will assume that $E = [d]$ and define, for every $t \geq 0$ and $i \in [d]$, the random set of indices
\begin{equation} \label{eq:defIniPartition}
	A_i(t) = \{j \in \mathbb{N} : X_j(t) = i\} \,.
\end{equation}
Note that $\mathbf{A}(t) := \{A_1(t), \ldots, A_d(t)\}$ is a partition of $\mathbb{N}$.
For a partition $\mathbf{A} := \{A_1, \ldots, A_d\}$ of $\mathbb{N}$ define $\mathrm{Perm}_{\mathbf{A}}$ as the set of finite permutations $\sigma : \mathbb{N} \to \mathbb{N}$ such that $\sigma(A_i) = A_i$ for every $i \in [d]$.
We start by stating the definition of partial exchangeability we will be considering for random variables.

\begin{definition}
	Suppose $Y = (Y_i)_{i \in \mathbb{N}}$ is a sequence of random variables with values in $[d]$.
	We say that $Y$ is partially exchangeable with respect to a partition $\mathbf{A} = \{A_1, \ldots, A_d\}$ of $\mathbb{N}$, if $Y_\sigma$ has the same law as $Y$ for every $\sigma \in \mathrm{Perm}_{\mathbf{A}}$.
\end{definition}

\begin{definition}
	A sequence $X = \bigl((X_i(t))_{t \geq 0}\bigr)_{i \in \mathbb{N}}$ of random processes $(X_i(t))_{t\geq 0}$ with values in $[d]$ is \emph{partially exchangeable}, with respect to the initial configuration, if and only if $X_{\sigma}$ has the same distribution as $X$ for every $\sigma \in \mathrm{Perm}_{\mathbf{A(0)}}$.
\end{definition}

Before passing on to the concept of consistency, a brief remark on terminology is in order.
Throughout this work, we use the term partial exchangeability for sequences of processes to indicate the following:
given an initial configuration $x \in [d]^\infty$, where $x_i$ denotes the starting state of the $i$-th process, the law of the sequence is invariant under permutations that preserve the partition of $\mathbb{N}$ induced by $x$.
This usage aligns with the notion of partial exchangeability commonly found in the Bayesian nonparametrics literature, and which also appears under the names \emph{internal exchangeability} (see Corollary 3.9 of \cite{aldousExchangeabilityRelatedTopics1985}) and \emph{separate exchangeability} (see Corollary 1.7 of \cite{kallenbergFoundationsModernProbability2021}).
The distinction lies in the setting: while the aforementioned works consider arrays of random elements $(X_{ij})_{i \in [d], j \in \mathbb{N}}$ whose laws are invariant under permutations of indices within rows, i.e. $(X_{ij})_{i \in [d], j \in \mathbb{N}}$ has the same law as $(X_{i\sigma_i(j)})_{i \in [d], j \in \mathbb{N}}$ for any choice of finite permutations $\sigma_1, \ldots, \sigma_d : \mathbb{N} \to \mathbb{N}$, we consider sequences of stochastic processes indexed by $\mathbb{N}$.

Finally we define the notion of consistency.

\begin{definition}\label{consistency}
	Consider a collection of processes $(X^{(n)})_{n \in \mathbb{N}}$ such that $X^{(n)} = (X^{(n)}(t))_{t \geq 0}$ takes values in $E^n$, where $E$ is a Polish space $E$.
	We say that $(X^{(n)})_{n \in \mathbb{N}}$ is \emph{consistent} if and only if for each $n\in \mathbb{N}$ and for every $m < n$ and every $x^{(n)} \in E^n$, $R_m X^{(n)}$ given $X^{(n)}(0) = x^{(n)}$ has the same law as $X^{(m)}$ given $X^{(m)} (0) = R_m x^{(n)}$.
\end{definition}

In the next section we will introduce the main mathematical model of our paper, the \emph{dice process}, and state the main result concerning it that relates it to a specific class of partially exchangeable sequence of Markov chains on a finite state space.
Afterwards, we will make a brief review of multitype $\Lambda$-coalescents, introduced in \cite{johnstonMultitypeLcoalescents2023} and proven to be in close relation to multitype continuous state branching processes in \cite{gonzalezcasanovaMultitypeLcoalescentsContinuous2024}.
Then we will introduce a new class of multitype coalescents, which we call \emph{multitype $\Lambda$-coalescent with multiple switching} in accordance to \cite{gonzalezcasanovaMultitypeLcoalescentsContinuous2024}, in which, additionally to the coalescence modelled by the multitype $\Lambda$-coalescent, simultaneous type switching is allowed.

\subsection{Dice processes and the class of consistent and partially exchangeable Markov chains}
\label{subsec:diceProcesses}

The aim of this section is to give a characterization of the class of partially exchangeable sequences of Markov chains $X^{(\infty)}$ such that $R_n X^{(\infty)}$ is a Markov chain on the finite state space $[d]^n$ for each $n \in \mathbb{N}$.
In order to do this, we first introduce the class of what we call \emph{dice processes}, which turn out to be the partially exchangeable sequence of processes with this property.
Write $\Delta_{d-1} := \{x \in \mathbb{R}_+^d : \abs{x} = 1\}$ for the $d$-dimensional simplex and $\Delta_{d-1}^d$ for the set of stochastic matrices of size $d \times d$.

\begin{definition}\label{def:dice_process}
	Given an array $A = (a_{ij})_{i \in [d], j \in [d] \setminus \{i\}}$ of non-negative numbers and a measure $\nu$ on $\Delta_{d-1}^d$ such that the integrability condition
	\begin{equation}
		\label{eq:nuIntegrability}
		\int_{\Delta_{d-1}^d} \sum_{i = 1}^d (1 - u_{ii}) \nu(dU) < \infty
	\end{equation}
	holds, we define the \emph{$n$-dice process} (with parameters $A$ and $\nu$) as the $[d]^n$-valued Markov process such that the transition rates are given by
	\[
		\tilde{\gamma}_{x, y}^{(n)} = a_{ij} + \int_{\Delta_{d-1}^d} u_{ij} \prod_{m \in [n] \setminus \{l\}} u_{x_m x_m} \nu(dU)
	\]
	if $x, y \in [d]^n$ are such that $x_l = i$ and $y = x + (j - i) e_l$, for some $i,j\in[d]$ and $l\in [n]$, and
	\[
		\tilde{\gamma}_{x, y}^{(n)} = \int_{\Delta_{d-1}^d} \prod_{m = 1}^n u_{x_m y_m} \nu(dU)
	\]
	for all other $x, y \in [d]^n$ such that $x \neq y$.
\end{definition}

We call this a ``dice process'' because we imagine $\nu$ as a measure sampling a ``dice'' $U$ in the set of stochastic matrices $\Delta^d_{d-1}$ which provides us with certain probabilities $u_{ij}, i,j\in[d]$.
This provides an interpretation of dice processes as a system of moving particles: Imagine $n$ labelled particles that move on the complete graph with $d$ vertices, denoted by $K_d$, jumping from vertex $i$ to vertex $j$.
The motion occurs either for only one particle at a time at rate $a_{ij}$,
or \emph{coordinated} by the collection $U=(u_{ij})_{i, j \in [d]} \in \Delta_{d-1}^d$ of  ``randomly sampled dice'',  each  with $d$ faces, where for dice $i\in[d]$ the probability of landing on face $j\in[d]$ is $u_{ij}$.
Once the dice is sampled, each particle in vertex $i \in [d]$ rolls the corresponding type $i$ dice (the $i$-th row of $U$) independently of others, and moves to the vertex $j$ indicated by the outcome of the roll.
In the dice process, $x$ corresponds to a particle configuration, with $x_l\in [d]$ being the position of particle $l\in[n]$.

\begin{lemma}\label{rem:ex_dice_infinity}
	For every $A$ and $\nu$  satisfying the conditions of Definition \ref{def:dice_process}, there exists  a $[d]^\infty$-valued process $X^{(\infty)}$ such that for each $n \in \mathbb{N}$, $X^{(n)}=R_n X^{(\infty)}$ is an $n$-dice process with parameters $A$ and $\nu$.
\end{lemma}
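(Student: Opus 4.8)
The plan is to construct $X^{(\infty)}$ explicitly through a single graphical (Poissonian) representation that drives all coordinates at once, and then to read off from that construction that each finite projection $R_n X^{(\infty)}$ is a Markov chain with exactly the rates of Definition \ref{def:dice_process}. The advantage of using one shared source of randomness is that consistency across different $n$ becomes automatic: $R_n X^{(\infty)}$ will depend only on the randomness attached to coordinates $1,\dots,n$ together with the shared coordinated events, so the law of the first $m$ coordinates of the $n$-system agrees with the $m$-system by construction. Concretely, for the individual motion I would take, for each coordinate $l\in\mathbb{N}$ and each ordered pair $(i,j)$ with $i\neq j$, independent Poisson processes $N^{l}_{ij}$ on $\mathbb{R}_+$ of rate $a_{ij}$; at a point of $N^l_{ij}$, coordinate $l$ jumps from $i$ to $j$ if it is currently in state $i$. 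For the coordinated motion I would take a Poisson point process $\mathcal{N}$ on $\mathbb{R}_+\times\Delta_{d-1}^d$ with intensity $dt\,\nu(dU)$, equipped at each atom $(t,U)$ with an independent family $(V_l)_{l\in\mathbb{N}}$ of i.i.d.\ uniforms on $[0,1]$; at such an atom a coordinate in state $i$ is sent to the state $j$ determined from $V_l$ and the $i$-th row of $U$ by the inverse-distribution rule, so that it lands on $j$ with probability $u_{ij}$, independently across coordinates.

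The first substantive step, and the \emph{main obstacle}, is to make this construction well-defined when $\nu$ is an infinite measure: in any bounded time window $\mathcal{N}$ then has infinitely many atoms, and one cannot simply order the coordinated events. The integrability condition \eqref{eq:nuIntegrability} is precisely what resolves this. For a fixed coordinate $l$ sitting in state $i$, the rate at which a coordinated atom actually moves it equals $\int_{\Delta_{d-1}^d}(1-u_{ii})\,\nu(dU)$, which is finite and bounded uniformly in $i$ by \eqref{eq:nuIntegrability}. Hence the counting process of genuine moves of coordinate $l$ has a finite compensator on any interval $[0,T]$, and a first-moment argument shows that almost surely only finitely many coordinated atoms move coordinate $l$ on compact time sets. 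To avoid the mild circularity in this bound (the state just before an atom depends on earlier atoms), I would carry it out by the $\varepsilon$-split $\nu=\nu|_{\{\sum_i(1-u_{ii})>\varepsilon\}}+\nu|_{\{\sum_i(1-u_{ii})\le\varepsilon\}}$: the first part is a finite measure by Markov's inequality, hence has finitely many atoms and is handled directly, while the tail contribution to the jumps of each coordinate has expected mass $\le T\int_{\{\sum_i(1-u_{ii})\le\varepsilon\}}\sum_i(1-u_{ii})\,\nu(dU)\to 0$, so summing over $\varepsilon\downarrow 0$ leaves finitely many jumps per coordinate in total.

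Once each coordinate performs only finitely many jumps on compact time sets, each is a well-defined càdlàg pure-jump process; since $[d]^\infty$ carries the product topology, coordinate-wise càdlàg paths assemble into a càdlàg $[d]^\infty$-valued path, so $X^{(\infty)}$ is well-defined. The remaining step is to identify the law of $R_n X^{(\infty)}$. Because $R_n X^{(\infty)}$ is a deterministic functional of the Poisson data attached to coordinates $1,\dots,n$ together with the shared process $\mathcal{N}$, the independence and stationarity of these Poisson objects make it a time-homogeneous Markov chain on the finite set $[d]^n$, whose rates I would compute directly. A single-coordinate transition $x\mapsto x+(j-i)e_l$ arises either from an individual jump, at rate $a_{ij}$, or from a coordinated atom at which coordinate $l$ moves to $j$ while coordinates $m\in[n]\setminus\{l\}$ stay put, contributing $\int_{\Delta_{d-1}^d} u_{ij}\prod_{m\in[n]\setminus\{l\}}u_{x_mx_m}\,\nu(dU)$; together these reproduce the first rate of Definition \ref{def:dice_process}. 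Any transition $x\to y$ changing at least two of the first $n$ coordinates can only come from one coordinated atom and contributes $\int_{\Delta_{d-1}^d}\prod_{m=1}^n u_{x_my_m}\,\nu(dU)$, the second rate.

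Finally, to confirm that $R_n X^{(\infty)}$ is genuinely the $n$-dice process I would note that all these rates are finite: the bound $u_{ij}\le 1-u_{ii}$ controls each individual rate integral, and $1-\prod_{m=1}^n u_{x_mx_m}\le\sum_{m=1}^n(1-u_{x_mx_m})$ bounds the total coordinated jump rate out of any state by $\sum_{m=1}^n\int_{\Delta_{d-1}^d}(1-u_{x_mx_m})\,\nu(dU)$, which is finite by \eqref{eq:nuIntegrability} (and the state space is finite, so there is no explosion). Since the identical construction yields the $n$-dice rates for every $n$ and the projections are compatible by design, this simultaneously establishes the Markov property, the correct rates, and consistency, completing the proof. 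An alternative route would bypass the explicit paths and instead verify that $\sum_{y:\,R_my=z}\tilde\gamma^{(n)}_{x,y}$ depends on $x$ only through $R_mx$ and equals $\tilde\gamma^{(m)}_{R_mx,z}$, giving a consistent projective family of finite-state semigroups to which the Kolmogorov extension theorem applies; I prefer the graphical construction because it delivers càdlàg regularity and consistency at the same time.
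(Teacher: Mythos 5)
Your proposal is correct, but it takes a genuinely different route from the paper's own proof. The paper argues projectively: it rewrites the rates in the form \eqref{aux_1}, so that $\tilde{\gamma}^{(n)}_{x,y}$ depends only on the transition counts $\#\{m : x_m = i,\ y_m = j\}$, observes that marginalising out a dropped coordinate (using row-stochasticity, $\sum_j u_{ij}=1$) shows the projection of an $n$-dice process is an $m$-dice process with the same parameters, and then invokes the Kolmogorov extension theorem to produce $X^{(\infty)}$ abstractly --- this is precisely the ``alternative route'' you mention in your closing sentence. You instead realise $X^{(\infty)}$ through a graphical Poissonian construction, which the paper does sketch in Remark~\ref{rem:GC} but does not use to prove this lemma. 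The trade-offs are real: your route must confront the infinite-intensity problem head on (when $\nu$ is infinite there are infinitely many coordinated atoms in any time window), a difficulty the abstract route never sees; in exchange it yields a concrete c\`adl\`ag realization, a single coupling of all the $n$-systems on one probability space, and makes the compatibility of the projections automatic rather than something encoded in consistency of finite-dimensional laws.

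One remark on your treatment of the circularity: the $\varepsilon$-split is not what resolves it --- what resolves it is the union-over-states bound you in fact write down, namely that an atom $(t,U)$ can move coordinate $l$ with probability at most $\sum_{i}(1-u_{ii})$ \emph{whatever} its current state. With that bound, the set of atoms at which $V_l$ would move coordinate $l$ from \emph{some} state $i\in[d]$ is a thinning of $\mathcal{N}$ with intensity at most $\int_{\Delta_{d-1}^d}\sum_{i}(1-u_{ii})\,\nu(dU)<\infty$ by \eqref{eq:nuIntegrability}, hence almost surely locally finite; ordering these atoms together with the individual jump times and processing them recursively defines each coordinate's path with no circularity at all, and the $\varepsilon$-split can be dispensed with. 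This is a streamlining of your argument, not a repair: your sketch, read charitably, already contains the needed uniform-in-state first-moment bound, so I regard the proof as correct.
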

\begin{proof}
	Note that the integrals in Definition \ref{def:dice_process} can be written as
	\[
		\int_{\Delta_{d-1}^d} \prod_{i = 1}^d \prod_{j = 1}^d u_{ij}^{ \# \{ m \in [n] : x_m = i, y_m = j \} } \nu(dU) \,,
	\]
	and as a consequence we can rewrite the rates of the $n$-dice process as
	\begin{align}\label{aux_1}
		\tilde{\gamma}_{x, y}^{(n)} = \sum_{i = 1}^d \sum_{j \in [d] \setminus \{i\}} a_{ij} 1_{\{ \exists l \in [n] : y = x + (j - i) e_l \}} + \int_{\Delta_{d-1}^d} \prod_{i = 1}^d \prod_{j = 1}^d u_{ij}^{\#\{m \in [n] : x_m = i, y_m = j\}} \nu(dU)
	\end{align}
	for every $x, y \in [d]^n$ such that $x \neq y$.  Thus the rates $\tilde{\gamma}_{x, y}^{(n)}$ only depend on the number of particles at any sites of the graph $K_d.$
	Therefore,  for any $n\in\mathbb{N},$ if we take an $n$-dice process $X^{(n)}$ with parameters $A$ and $\nu$,  its projection $R_m X^{(n)}$ on $[d]^m$, with $m < n$, is an $m$-dice process with parameters $A$ and $\nu$.
	Thus, by Kolmogorov's extension theorem, see Theorem 8.23 in \cite{kallenbergFoundationsModernProbability2021}, there exists a $[d]^\infty$-valued process $X^{(\infty)}$ with the properties we claim.
\end{proof}

\begin{definition}\label{def:dice_infinity}
	We call the process $X^{(\infty)}$ from Lemma \ref{rem:ex_dice_infinity} the \emph{dice process} with parameters $A$ and $\nu$.  We identify $X^{(\infty)}$ with the sequence $(X^{(n)})_{n\in\mathbb{N}}$.
\end{definition}

While the definition of a dice process is rather simple, it turns out that it covers all of the class of partially exchangeable sequences of Markov chains that are consistently Markovian, which is one of our main results.

\begin{theorem} \label{thm:pExDP}
	Let $Y^{(\infty)}$ be a $[d]^\infty$-valued process such that $R_n Y^{(\infty)}$ is a $[d]^n$-valued Markov chain for each $n \in \mathbb{N}$.
	Then $Y^{(\infty)}$ is partially exchangeable if and only if it is a dice process.
	Moreover, if the latter holds, $Y^{(\infty)}$ is exchangeable if and only if $Y^{(\infty)}(0)$ is exchangeable.
\end{theorem}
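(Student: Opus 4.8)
The plan is to prove the two directions of the equivalence separately, and then handle the final exchangeability claim as a corollary of the structure we uncover. For the \emph{if} direction (dice process $\Rightarrow$ partially exchangeable) the work is essentially already done: the rewriting of the rates in \eqref{aux_1} shows that $\tilde\gamma^{(n)}_{x,y}$ depends on $(x,y)$ only through the occupation counts $\#\{m\in[n]: x_m=i,\, y_m=j\}$ for each ordered pair $(i,j)$. I would first verify that for any $\sigma\in\mathrm{Perm}_{\mathbf{A}(0)}$ the rate matrix is invariant under simultaneously relabelling $x\mapsto x_\sigma$ and $y\mapsto y_\sigma$, because such a relabelling permutes the summands within each count but leaves the counts themselves unchanged. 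Since the initial-configuration permutation preserves the partition and the rates are count-symmetric, the generator of $R_nY^{(\infty)}$ commutes with the action of $\mathrm{Perm}_{\mathbf{A}(0)}$, and partial exchangeability of the whole process follows from partial exchangeability of the initial law together with uniqueness of the Markov semigroup.

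The \emph{only if} direction is the substantive one and where I expect the main obstacle. Here I would start from an arbitrary consistent, partially exchangeable family $(R_nY^{(\infty)})_n$ of Markov chains and try to reconstruct the parameters $A$ and $\nu$ from the rates. The strategy is to exploit consistency to reduce the analysis to a few small cases: partial exchangeability forces the single-coordinate jump rates $x\mapsto x+(j-i)e_l$ to depend only on the types $i=x_l$ and $j$ and on the occupation vector of the remaining coordinates, and consistency constrains how these rates change as $n$ grows. The key algebraic step is to separate the part of the rate that is \emph{linear} in the number of participating coordinates (the individual motion, giving the $a_{ij}$) from the genuinely \emph{coordinated} part in which several coordinates move according to a common rule. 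I would then argue that the coordinated part, viewed as a function of the occupation counts, is a \emph{completely monotone} / moment-type functional, so that a Hausdorff–de Finetti style moment problem applies: the consistency across all $n$ is exactly the compatibility condition guaranteeing that the rates arise as mixed moments $\int \prod_{i,j} u_{ij}^{\#\{\cdots\}}\,\nu(dU)$ of a single measure $\nu$ on $\Delta_{d-1}^d$. The integrability condition \eqref{eq:nuIntegrability} should emerge from requiring the total jump rate out of any finite configuration to be finite.

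The hardest part will be making the moment-representation rigorous, i.e.\ showing that the family of coordinated rates indexed by occupation vectors is precisely the class of mixed moments of some finite measure on the stochastic matrices, and that the representing measure is unique (so that $A$ and $\nu$ are genuinely determined). This requires identifying the correct product structure across the $d$ rows of $U$ and invoking a multivariate moment theorem on the compact set $\Delta_{d-1}^d$; the compactness and metrizability of $[d]^\infty$ noted earlier ensures the de Finetti measure exists, and I would lean on the classical multidimensional de Finetti/Hausdorff representation to extract $\nu$, being careful to peel off the diagonal ``nobody moves'' contribution and the single-coordinate contribution before applying it.

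Finally, for the last sentence, once $Y^{(\infty)}$ is known to be a dice process I would observe that its rates are invariant under \emph{all} finite permutations (not merely those in $\mathrm{Perm}_{\mathbf{A}(0)}$), again because they depend only on occupation counts. Hence the law of the process at any time is a fixed equivariant functional of the initial law, so full exchangeability of the trajectory is equivalent to full exchangeability of $Y^{(\infty)}(0)$; this direction is immediate and requires only the same commutation argument as above applied to arbitrary $\sigma$.
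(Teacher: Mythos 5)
There is a genuine gap in your ``only if'' direction, and it sits exactly at the step you treat as automatic: the claim that partial exchangeability ``forces the single-coordinate jump rates to depend only on the types and on the occupation vector''. Partial exchangeability, as defined in this paper, is invariance only under permutations in $\mathrm{Perm}_{\mathbf{A}(0)}$, i.e.\ permutations preserving the partition induced by the \emph{initial} configuration. Such permutations let you compare the rates out of a fixed configuration $x$ with themselves (within-configuration symmetry), but they give you no way to compare the rates out of $x$ with the rates out of $x_\sigma$ for an arbitrary finite permutation $\sigma$ --- for instance, with $d=2$, nothing yet relates the rate matrix of the chain started at $(1,2)$ to that of the chain started at $(2,1)$, and consistency does not help either, since it only removes trailing coordinates. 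But the full count-symmetry \eqref{eq:pexRates}, which is what you need before any moment-problem machinery can be applied (otherwise the rates are not even well-defined functions of $(b,K)$), requires precisely this cross-configuration comparison. The paper closes this hole with a separate, substantive argument: it starts the process from the universal configuration $\tilde{x}=(1,2,\ldots,d,1,2,\ldots)$, arranges the coordinate processes into an array of $D(\mathbb{R}_+,[d])$-valued random elements, applies de Finetti's theorem for partially exchangeable arrays (Corollary 3.9 of \cite{aldousExchangeabilityRelatedTopics1985}) \emph{at the level of path space} to obtain the mixture representation \eqref{eq:pExofY}, and from it deduces the identity \eqref{eq:pExPermutationInitialValues} relating $\mathbb{P}_x$ and $\mathbb{P}_{x_\sigma}$ for arbitrary $\sigma$; only then does the first-jump argument \eqref{eq:jumpPartial} yield \eqref{eq:pexRates}. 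Your proposal contains no counterpart of this path-space de Finetti step, and without it the reduction to moments cannot be set up.

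Two smaller remarks. First, once count-symmetry is granted, your moment-theoretic plan is essentially the paper's Lemma \ref{lem:bkRatesExplicit} in different clothing: the paper normalizes the rates into probability arrays, builds partially exchangeable $[d]$-valued arrays from them, and applies de Finetti again, which is the probabilistic form of the multivariate Hausdorff representation you invoke; combined with Proposition \ref{th:cerwCharact} this finishes the direction. However, your plan to ``peel off the single-coordinate contribution \emph{before} applying'' the representation is not implementable: the split of a rate into $a_{ij}$ plus a coordinated integral is not visible at the level of any individual rate, and in the paper the individual-motion constants emerge \emph{a posteriori} as the atoms $a_{ij}=\nu_{ij}(\{\Id\})$ of the representing measures at the identity matrix. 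Second, your ``if'' direction and your treatment of the final exchangeability claim match the paper's arguments (Remark \ref{rmk:pExchOfDice1} and the Markov-property computation in Section \ref{subsec:proofCharacterizationDiceProcesses}) and are fine.
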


Compare this theorem with Theorem 3.2 of \cite{carinciConsistentParticleSystems2021}, where Carinci et al. give sufficient conditions for an \emph{exchangeable} sequence of Markov chains to be consistent.
Their conditions are expressed through a compatible configuration process: for each $n \in \mathbb{N}$ and $t \geq 0$, let
\[
	\eta_n(t) := \sum_{i=1}^n \delta_{\pi_i Y^{(\infty)}(t)},
\]
where $\pi_i Y^{(\infty)}(t)$ denotes the position of particle $i$ at time $t$.
Consistency then requires that, for $n \geq 2$, removing a particle uniformly at random from $\eta_{n+1} := (\eta_{n+1}(t))_{t \geq 0}$ yields a process with the same distribution as $\eta_n$.
In contrast, our dice process framework operates at the level of coordinate processes, where consistency means that $R_n Y^{(\infty)}$ has the same distribution as the process obtained by removing the last coordinate of $R_{n+1} Y^{(\infty)}$.
If $Y^{(\infty)}(0)$ is exchangeable, then by Theorem \ref{thm:pExDP}, $Y^{(\infty)}$ is itself exchangable, and removing a specific coordinate would be equivalent in law to removing one uniformly at random.
Hence, a structural distinction between the two approaches lies in our setting, which accommodates partially exchangeable sequences of Markov chains.

We will prove Theorem \ref{thm:pExDP} below in Section \ref{sec:diceProcess}. The proof will depend on a characterisation of the rates in terms of what we call $(b,K)$-changes, which will be introduced in the next section.  Some implications are quite obvious from the definition, we collect them in the following remark.

\begin{remark} \label{rmk:pExchOfDice1} \label{rmk:almostExchOfDice1}
	\begin{itemize}
		\item[(1.)]  Let $X^{(\infty)}$ be a dice process with parameters $A$ and $\nu$ started from $x \in [d]^\infty$.
		      Set $A_i^x := \{n \in \mathbb{N} : x_n = i\}$ for each $i \in [d]$ and let $\sigma : \mathbb{N} \to \mathbb{N}$ be a finite permutation such that $\sigma(A_i^x) = A_i^x$ for all $i \in [d]$.
		      By \eqref{aux_1}, $X_\sigma^{(n)} = R_n X_\sigma^{(\infty)}$ is an $n$-dice process started at $R_n x$ for each $n \in \mathbb{N}$, so $X_\sigma^{(\infty)}$ is a dice process, with parameters $A$ and $\nu$, started at $x$.
		      This is, $X^{(\infty)}$ and $X_\sigma^{(\infty)}$ have the same law, so a dice process is partially exchangable.
		\item[(2.)] Let $X^{(\infty)}$ be a dice process with parameters $A$ and $\nu$ started from $x \in [d]^\infty$.
		      Let $\sigma : \mathbb{N} \to \mathbb{N}$ be a finite permutation.
		      By \eqref{aux_1}, $X_\sigma^{(n)} = R_n X_\sigma^{(\infty)}$ is an $n$-dice process started at $R_n x_\sigma$ for each $n \in \mathbb{N}$, so $X_\sigma^{(\infty)}$ is a dice process started at $x_\sigma$ with parameters $A$ and $\nu$.
		\item[(3.)]  If $X^{(\infty)}$ is a dice process, then it is a sequence of partially exchangeable Markov chains.
		      In particular, if we consider $X_1 := \eta_1 X^{(\infty)}$, then it is a Markov chain over the set $[d]$ whose rate of transition from state $i$ to state $j$, with $i \neq j$, is
		      \[
			      a_{ij} + \int_{\Delta_{d-1}^d} u_{ij} \nu(dU) \,.
		      \]
		\item[(4.)]
		      Let $X^{(\infty)}$ be a dice process with $a_{ij} = 0$ for all $i \neq j$, and let $\nu$ be a finite measure.
		      Denote by $Z_n$ the skeleton chain of $\eta_n X^{(\infty)}$.
		      Then, for each $n \in \mathbb{N}$, $Z_n$ is a discrete-time Markov chain in an independent and identically distributed random environment, sampled according to $\nu(dU) / \nu(\Delta_{d-1}^d)$.
		      Moreover, the random environment is shared by all processes $Z_n$.
		\item[(5.)]  Let $Y$ be a Markov chain on $[d]$ such that the rate of transition from state $i$ to state $j$, with $i \neq j$, is given by $\alpha_{ij}$.
		      The collection of parameters $(A, \nu)$ such that a dice process $X^{(\infty)}$ satisfies that $X_1 = \eta_1 X^{(\infty)}$ has the same distribution as $Y$ is convex.
	\end{itemize}
\end{remark}

\begin{remark}[Graphical construction]\label{rem:GC}
	It is rather straightforward to provide a graphical construction of dice processes, which also makes formula \eqref{aux_1} even more transparent. Let us consider the set $\mathbb{R}_+ \times (\mathbb{N} \times [d])$.  For each $l \in \mathbb{N}$, let $N_l$ be a Poisson point process on $\mathbb{R}_+ \times [d]^2$ with intensity $\lambda \otimes \alpha$, where
	\[
		\alpha(d\xi) = \sum_{i \in [d]} \sum_{j \in [d] \setminus \{i\}} a_{ij} \delta_{(i,j)}(d\xi) \,
	\]
	and $\lambda$ is the Lebesgue measure.
	Additionally consider another Poisson point process $N_c$ on $\mathbb{R}_+ \times [d]^{\mathbb{N} \times [d]}$ with intensity measure $\lambda \otimes L$, where
	\[
		L(d\xi) = \int_{\Delta_{d-1}^d} \nu(dU) \prod_{n = 1}^\infty \prod_{i = 1}^d \mathbf{P}_{u_i}(d\xi_{n,i}) \,.
	\]
	In the previous expression, under $\mathbf{P}_{u_i}$, $\xi$ is a random variable with values in $[d]$ such that $\mathbf{P}_{u_i}(\xi = j) = u_{ij}$ for each $i, j \in [d]$.
	The process $N_l$ corresponds to events where only particle $l$ moves,
	while $N_c$ takes care of the coordinated events.

	We assume that all of the Poisson point processes are independent.  The construction of the graph is as follows:
	\begin{itemize}
		\item If $(t, (i, j)) \in N_l$ for some $l \in \mathbb{N}$, draw an arrow from $(t, l, i)$ to $(t, l, j)$.
		\item If $(t, \xi) \in N_c$, then for each $l \in \mathbb{N}$ and $i \in [d]$ draw an arrow from $(t, l, i)$ to $(t, l, \xi_{l, i})$ if $i \neq \xi_{l, i}$.
	\end{itemize}
	We start with a initial configuration $\eta(0) = (\eta_l(0))_{l \in \mathbb{N}}$, where $\eta_l(0) \in [d]$ for each $l$ means that there is a particle in $(0, l, \eta_l(0))$ and no particles in $(0, l, i)$ for $i \in [d] \setminus \{\eta_l(0)\}$.
	From this starting configuration we let the particles evolve in time following the arrows that were drawn over the graph.
	For each time $t \geq 0$, $\eta_l(t)$ represents the position of the $l$-th particle at time $t$.
	An example is presented in Figure \ref{fig:graph}, where the graph corresponding to one random walk with $d=6$ states is given.
	In the same example it becomes clear why reversing the process does not yield a Markov chain: Due to the coordinated jumps, there is in general no unique backward arrow at a large event.

	\begin{figure}[htb]
		\centering
		\includegraphics{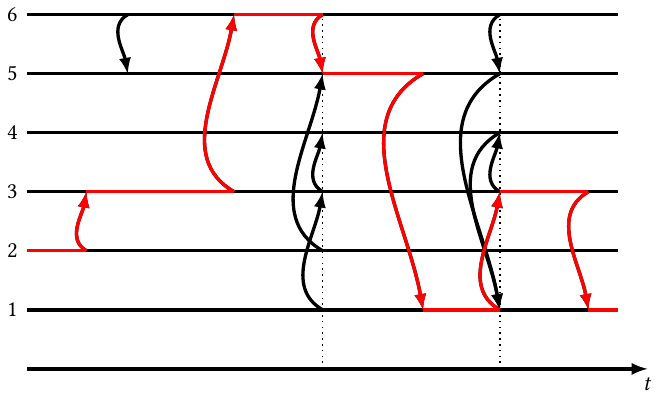}
		\caption{Example of a graph for an arbitrary particle. The dotted line correspond to times where coordination occurs. The red line corresponds to the path of a particle that started at position $2$.}
		\label{fig:graph}
	\end{figure}

\end{remark}

\subsection{Consistency condition for the rates of dice processes}\label{subsect:particle_interpretation}

We will now have a closer look at the rates of a dice process, and rewrite them in a way that will be suitable to state the condition under which they are consistent.

We will need some more notation.
For $n \in \mathbb{N}_0$, the set $\mathcal{S}_{d-1}(n) := n \Delta_{d-1} \cap \mathbb{N}_0^d = \{k \in \mathbb{N}_0^d : k_1+....+k_d = n\}$ consists of all vectors in $\mathbb{N}_0^d$ whose components sum to $n$.
For $n \in \mathbb{N}_0^d$, the set $\mathcal{S}_{d-1,d}(n) := \mathcal{S}_{d-1}(n_1) \times \cdots \times \mathcal{S}_{d-1}(n_d)$ is the Cartesian product of these sets.
A typical element of the latter is written as a matrix $K = (k_{ij})_{i,j=1}^d$, where $k_i := (k_{i1}, \ldots, k_{id}) \in \mathcal{S}_{d-1}(n_i)$ for $i \in [d]$.

The diagonal matrix $\diag(n)$ for $n \in \mathbb{N}_0^d$ is defined by $k_{ij} = n_i \delta_{ij}$ for all $i, j \in [d]$.
For $x \in \mathbb{R}^d$ and $n \in \mathbb{N}_0^d$, we define $x^n := \prod_{i=1}^d x_i^{n_i}$.
The multinomial coefficient for $n \in \mathbb{N}_0$ and $k \in \mathcal{S}_{d-1}(n)$ is given by $\binom{n}{k} := \frac{n!}{k_1! \cdots k_d!}$.

Let us now look again at the rates of the dice process. Let a particle configuration for the dice process be given by a vector $x \in [d]^n$, where $x_l\in[d]$ denotes the position of the $l$-th particle.
Suppose that the $n$ particles move from a configuration $x \in [d]^n$ to a configuration $y \in [d]^n$ at a rate $\tilde{\gamma}_{x, y}$ that only depends on the number of particles that move from vertex $i$ to vertex $j$ for each $i, j \in [d]$.
This is, if $x', y' \in [d]^n$ are other vectors such that
\begin{equation} \label{eq:equalChanges}
	\#\{l \in [n] : x_l = i, y_l = j\}
	= \#\{l \in [n] : x_l' = i, y_l' = j\} \quad\text{ for all } i, j \in [d] \,,
\end{equation}
then
\begin{equation} \label{eq:pexRates}
	\tilde{\gamma}_{x, y} = \tilde{\gamma}_{x', y'} \,.
\end{equation}
Consider $b \in \mathcal{S}_{d-1}(n)$ and $K \in \mathcal{S}_{d-1,d}(b) \setminus \{\diag (b)\}$.
In the context of the particles, we can understand $b_i$ as the number of particles that are in vertex $i$ right before a jump, and $k_{ij}$ as the number of particles that moved from vertex $i$ to vertex $j$.
For a given $b$ and $K$, if $x, y \in [d]^n$ are such that
\begin{equation} \label{eq:bkDef}
	b_i = \#\{l \in [n]: x_l = i\}
	\quad\text{and}\quad
	k_{ij} = \#\{l \in [n] : x_l = i, y_l = j\} \,,
\end{equation}
let us put
\[
	\gamma_{b, K} = \tilde{\gamma}_{x, y} \,.
\]
By \eqref{eq:equalChanges},  $\gamma_{b, K}$ is well defined.
Therefore, we say that \emph{a $(b, K)$-change occurred} in the particle system if and only if it moved from a state $x \in [d]^n$ to a state $y \in [d]^n$ such that \eqref{eq:bkDef} holds.
Observe that for each choice of $b$ and $K$, there are
\[
	\frac{n!}{\prod_{i = 1}^d \prod_{j = 1}^d k_{ij}!}
\]
different choices of $x$ and $y$ that satisfy \eqref{eq:bkDef},
while for each $x$ and a given $K$, there are
\[
	\prod_{i = 1}^d \frac{b_i!}{\prod_{j = 1}^d k_{ij}!}
\]
different choices of $y$ such that a move from $x$ to $y$ is a $(b, K)$-change.
By the previous description, if we define $X_i^{(n)}(t)$ as the location of particle $i$ at time $t$, then $X^{(n)} := (X_1^{(n)}, \ldots, X_n^{(n)})$ is a Markov chain on $[d]^n$.

\begin{remark} \label{rem:bkChangesDice}
	As we saw in the proof of Lemma  \ref{rem:ex_dice_infinity}, the rates of a dice process agree for configurations such that \eqref{eq:pexRates} is satisfied.
	Therefore,  for a dice process, $R_n X^{(\infty)}$ only has $(b, K)$-changes for each $n \in \mathbb{N}$.
\end{remark}

We now turn to show that if a process $X^{(\infty)}$ is such that $R_n X^{(\infty)}$ is a Markov chain on $[d]^n$ that only has $(b, K)$-changes, then it must be a dice process.
To proceed we will now suppose that for each $n \in \mathbb{N}$, $X^{(n)} = (X_i^{(n)})_{i \in [n]}$ is a $[d]^n$-valued Markov chain with only $(b, K)$-changes determined by collection of rates
\[
	\gamma_{b, K} \quad\text{with }
	b \in \mathcal{S}_{d-1}(n) \text{ and }
	K \in \mathcal{S}_{d-1,d}(b) \setminus \{\diag(b)\} \,.
\]

\begin{definition} In the setup of this subsection, we say that the collection of rates
	\begin{equation} \label{eq:gammaRates}
		\gamma = \bigl( \gamma_{b, K} : b \in \mathbb{N}_0^d \setminus \{0\}, K \in \mathcal{S}_{d-1,d}(b) \setminus \{\diag(b)\} \bigr)
	\end{equation}
	is \emph{consistent} if and only if the sequence of processes $(X^{(n)})_{n \in \mathbb{N}}$ is consistent in the sense of Definition  \ref{consistency}.
\end{definition}

The following lemma characterizes, in terms of an algebraic relation, the transition rates of a consistent sequence of Markov chains whose rates are given by the collection $\gamma$ defined in \eqref{eq:gammaRates}.
This result mimics Lemma 18 in \cite{pitmanCoalescentsMultipleCollisions1999}, and it is an immediate consequence of Theorem 4 of \cite{burkeMarkovianFunctionMarkov1958}.  Its proof is provided in Section \ref{sec:diceProcess}.
Let $E=(E_{i,j})_{i,j\in [d]^2}$ be the elementary matrix with a $1$ at position $(i, j) \in [d]^2$ and zeros elsewhere, that is, $E_{ij} := (\delta_{il} \delta_{j\ell})_{l,\ell\in [d]^2}$.

\begin{lemma} \label{lem:consistencyArray}
	Let $\gamma = (\gamma_{b, K} : b \in \mathbb{N}_0^d \setminus \{0\}, K \in \mathcal{S}_{d-1,d}(b) \setminus \{\diag(b)\})$ be an array of nonnegative real numbers.
	The array of rates $\gamma$ is consistent if and only if
	\begin{equation} \label{eq:consistencyEq}
		\gamma_{b, K} = \sum_{l = 1}^d \gamma_{b + e_j, K + E_{jl}}
		\quad\text{for all } j \in [d]\,.
	\end{equation}
\end{lemma}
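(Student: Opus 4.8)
The plan is to reduce the consistency of the whole collection $(X^{(n)})_{n \in \mathbb{N}}$ to a single-step statement and then read off \eqref{eq:consistencyEq} from the classical lumpability criterion. Since $R_m = R_m \circ R_{n-1}$ as maps on $[d]^n$ for every $m < n$, consistency of the full collection follows by downward induction once I know that, for each $n$, the process $R_n X^{(n+1)}$ obtained by deleting the last coordinate has the same law as $X^{(n)}$ (with matching initial condition). Thus it suffices to characterise when this single-step consistency holds for all $n$.

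Next I would recognise $R_n \colon [d]^{n+1} \to [d]^n$ as a lumping map: its fibre over $x \in [d]^n$ is $R_n^{-1}(x) = \{(x,i) : i \in [d]\}$, the set of $(n+1)$-configurations agreeing with $x$ on the first $n$ coordinates. By the strong lumpability criterion (Theorem 4 of \cite{burkeMarkovianFunctionMarkov1958}), $R_n X^{(n+1)}$ is a Markov chain exactly when, for every $x \neq y$ in $[d]^n$ and every $j \in [d]$, the total rate
\[
	\sum_{l \in [d]} \tilde{\gamma}^{(n+1)}_{(x,j),(y,l)}
\]
of jumping from the state $(x,j)$ into the fibre $R_n^{-1}(y)$ does not depend on $j$; and in that case this common value is precisely the transition rate of the lumped chain from $x$ to $y$. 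Note that the only transitions from $(x,j)$ landing in $R_n^{-1}(y)$ are those to the states $(y,l)$, so the sum above captures all of them.

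The key computation is to express these lifted rates in the $(b,K)$-encoding. If $b$ is the type-count of $x$ and $K$ the change matrix of $x \to y$ (so $K \in \mathcal{S}_{d-1,d}(b) \setminus \{\diag(b)\}$ because $x \neq y$), then appending a last particle at site $j$ that moves to site $l$ changes the before-count to $b + e_j$ and the change matrix to $K + E_{jl}$; hence $\tilde{\gamma}^{(n+1)}_{(x,j),(y,l)} = \gamma_{b+e_j,\,K+E_{jl}}$, a well-defined $(b,K)$-change rate since $K + E_{jl} \in \mathcal{S}_{d-1,d}(b+e_j)$ and is off-diagonal (as $K$ already is). Consequently, $R_n X^{(n+1)}$ is Markov with the same rates as $X^{(n)}$ — that is, single-step consistency holds — if and only if $\sum_{l} \gamma_{b+e_j, K+E_{jl}}$ is independent of $j$ and equals the rate $\gamma_{b,K}$ of $X^{(n)}$. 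These two requirements are captured simultaneously by the single identity $\gamma_{b,K} = \sum_{l=1}^d \gamma_{b+e_j, K+E_{jl}}$ for all $j$, which, ranging over all $n$ and all admissible $b,K$, is exactly \eqref{eq:consistencyEq}.

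The main obstacle I anticipate is bookkeeping rather than conceptual: one must check that the correspondence $(x,j)\to(y,l) \leftrightarrow (b+e_j,\,K+E_{jl})$ is correct and non-degenerate, and, crucially, that it is the \emph{same law as $X^{(n)}$} requirement of Definition \ref{consistency} — not merely Markovianity of the projection — that upgrades bare lumpability (independence of $j$) to the sharper statement that the common value equals $\gamma_{b,K}$. For the forward direction this is immediate, since matching the initial jump rate out of $x$ forces $\sum_l \gamma_{b+e_j,K+E_{jl}} = \gamma_{b,K}$ for every $j$; for the converse I would invoke the lumpability theorem to promote the pointwise identity to equality of the full laws, and then close the argument with the downward induction noted at the outset.
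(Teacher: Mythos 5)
Your proposal is correct and follows essentially the same route as the paper: reduce to the single-step statement that $R_n X^{(n+1)}$ has the law of $X^{(n)}$ via the factorisation $R_m = R_m R_{m+1}\cdots R_{n-1}$ and induction, then apply the Burke--Rosenblatt criterion (Theorem 4 of \cite{burkeMarkovianFunctionMarkov1958}) with exactly the same bookkeeping identifying the lifted rates as $\gamma_{b+e_j, K+E_{jl}}$. If anything, your write-up is more explicit than the paper's own proof in separating bare lumpability (independence of $j$) from the matching-of-rates requirement that forces the common value to equal $\gamma_{b,K}$.
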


The following lemma gives an explicit representation of the rates $\gamma_{b, K}$ whenever the array $\gamma$ is consistent. This representation transforms the jumping particle/dice process point of view into the rates of a $(b,K)$ transition of particle configurations, which only depend on the number of particles with transit from one vertex to another.

\begin{lemma} \label{lem:bkRatesExplicit}
	Let $\gamma$ be as in Lemma \ref{lem:consistencyArray}.
	Then $\gamma$ is consistent if and only if there exist a measure $\nu$ on $\Delta_{d-1}^d$ and an array of constants $(a_{ij} : i \in [d], j \in [d] \setminus \{i\})$ such that
	\begin{equation} \label{eq:nuConstraint}
		\int_{\Delta_{d-1}^d} \sum_{i = 1}^d (1 - u_{ii}) \nu(dU) < \infty \,,
	\end{equation}
	and
	\begin{equation} \label{eq:transitionCharact}
		\gamma_{b, K} = \int_{\Delta_{d - 1}^d} \prod_{i = 1}^d \prod_{j = 1}^d u_{ij}^{k_{ij}} \nu(dU) + \sum_{i = 1}^d \sum_{j \in [d] \setminus \{i\}} a_{ij} 1_{\{K = \diag(b) - E_{ii} + E_{ij}\}} \,.
	\end{equation}
\end{lemma}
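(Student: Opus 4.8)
\emph{Strategy.} By Lemma \ref{lem:consistencyArray} I may replace ``consistent'' by the algebraic relation \eqref{eq:consistencyEq} throughout. Since $b$ is nothing but the vector of row sums of $K$, I abbreviate $\gamma_K:=\gamma_{b,K}$, and I write $N(K):=\sum_{i\neq j}k_{ij}$ for the number of off-diagonal moves recorded by $K$; thus $K=\diag(b)$ corresponds to $N(K)=0$, the single moves in \eqref{eq:transitionCharact} are exactly the $K$ with $N(K)=1$, and the domain of $\gamma$ is the set of off-diagonal matrices. The \emph{sufficiency} direction is a direct check that \eqref{eq:transitionCharact} implies \eqref{eq:consistencyEq}: replacing $K$ by $K+E_{jl}$ multiplies the integrand by $u_{jl}$, so summing over $l$ and using $\sum_{l}u_{jl}=1$ (stochasticity of $U$) reproduces the integral term; for the single-move term a short count shows that among the matrices $K+E_{jl}$, $l\in[d]$, only $l=j$ (which raises a diagonal entry and leaves $N$ fixed) can again be a single move, while $l\neq j$ gives $N=2$, so the sum collapses to the same $a$-term. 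The integrability \eqref{eq:nuConstraint} is assumed here, being identical to \eqref{eq:nuIntegrability}.

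\emph{Necessity, building the measure.} Assume \eqref{eq:consistencyEq}. The first task is to manufacture an honest measure, taming the expected blow-up of $\nu$ at the identity matrix $\Id$. For every $K\in\mathbb{N}_0^{d\times d}$, including diagonal ones, set
\[
	M^\sharp_K:=\sum_{p\neq q}\gamma_{\,K+E_{pq}} ,
\]
which is legitimate because each $K+E_{pq}$ is off-diagonal. Applying \eqref{eq:consistencyEq} to each summand gives the row-wise moment relations $M^\sharp_K=\sum_{l=1}^d M^\sharp_{K+E_{jl}}$ for all $j$, while $M^\sharp_K\ge0$ and $M^\sharp_0=\sum_{p\neq q}\gamma_{e_p,E_{pq}}<\infty$. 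By the multivariate Hausdorff moment theorem on the product of simplices $\Delta_{d-1}^d$ (the row-wise relations encode the constraints $\sum_l u_{jl}=1$, and nonnegativity yields tightness of the Bernstein approximations, hence also uniqueness) there is a unique finite measure $\tilde\nu$ with $M^\sharp_K=\int\prod_{i,j}u_{ij}^{k_{ij}}\tilde\nu(dU)$. Splitting off the atom as $\tilde\nu=\tilde\nu|_{\{U\neq\Id\}}+c_0\,\delta_{\Id}$, I then \emph{define}
\[
	\nu(dU):=\Big(\textstyle\sum_{i=1}^d(1-u_{ii})\Big)^{-1}\tilde\nu|_{\{U\neq\Id\}}(dU),
\]
a well-defined (possibly infinite) measure since $\sum_i(1-u_{ii})>0$ off $\Id$; the equality $\int\sum_i(1-u_{ii})\,\nu(dU)=\tilde\nu(\{U\neq\Id\})\le M^\sharp_0<\infty$ is exactly \eqref{eq:nuConstraint}.

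\emph{Necessity, matching the rates.} Put $N_K:=\int\prod u_{ij}^{k_{ij}}\nu(dU)$ for off-diagonal $K$; such an integrand is dominated by $\sum_{i\neq j}u_{ij}$, so $N_K<\infty$, and $\sum_l u_{jl}=1$ gives $N_K=\sum_l N_{K+E_{jl}}$. Because $\prod u^{k}$ vanishes at $\Id$ when $N(K)\ge1$, for off-diagonal $K$ one computes $\sum_{p\neq q}N_{K+E_{pq}}=\int\sum_i(1-u_{ii})\prod u^{k}\,\nu=\int\prod u^{k}\,\tilde\nu=M^\sharp_K=\sum_{p\neq q}\gamma_{K+E_{pq}}$. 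Thus the residual $R_K:=\gamma_K-N_K$ satisfies both \eqref{eq:consistencyEq} and $\sum_{p\neq q}R_{K+E_{pq}}=0$. Summing the $d$ instances of \eqref{eq:consistencyEq} for $R$ and using the second identity to annihilate the off-diagonal contributions leaves the discrete harmonicity $\sum_{j}R_{K+E_{jj}}=d\,R_K$: for each fixed off-diagonal shape, $R$ as a function of the background equals the average of its $d$ forward shifts. Since \eqref{eq:consistencyEq} forces $\gamma_{K+E_{jj}}\le\gamma_K$ and $N_K\to0$ as all $b_i\to\infty$ (dominated convergence), $R$ is bounded on each shape-slice and tends to $a_\kappa:=\inf_b\gamma_K\ge0$ as the background grows in every coordinate. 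A martingale/maximum-principle argument for the lattice walk that increments a uniformly chosen coordinate then forces $R_K\equiv a_\kappa$, independent of the background. Substituting this into $\sum_{p\neq q}R_{K+E_{pq}}=0$ gives $\sum_{p\neq q}a_{\kappa+E_{pq}}=0$; as all $a_\bullet\ge0$ and $\kappa+E_{pq}$ exhausts the shapes with $N\ge2$, we get $a_\kappa=0$ whenever $N(\kappa)\ge2$. Hence $R_K=0$ for $N(K)\ge2$ and $R_K=a_{ij}:=a_{E_{ij}}\ge0$ when $K$ is the single move $i\to j$, which is precisely \eqref{eq:transitionCharact}.

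\emph{Main obstacle.} The two delicate points are (i) converting the purely algebraic relation into a measure — isolating the atom of $\tilde\nu$ at $\Id$, which is the source of the individual rates $a_{ij}$ in analogy with the Kingman atom in Pitman's representation, and correctly invoking the moment theorem on $\Delta_{d-1}^d$ — and, above all, (ii) the residual analysis, where one must show that $R$ is background-free and supported on single moves. Step (ii) is the crux: it is here that consistency \eqref{eq:consistencyEq}, the taming identity $\sum_{p\neq q}R_{K+E_{pq}}=0$, and the nonnegativity of $\gamma$ have to be combined, via the harmonicity relation and the limiting behaviour of $N_K$, to pin down both the support and the sign of the surviving terms.
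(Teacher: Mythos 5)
Your proof is correct, and while the sufficiency direction coincides with the paper's (same case split, same use of row-stochasticity $\sum_l u_{jl}=1$), the necessity direction takes a genuinely different route. The paper tames the potentially infinite measure one transition pair at a time: it normalises $\tilde{\gamma}^{(ij)}_{b,K}:=\gamma_{b+e_i,K+E_{ij}}/\gamma_{e_i,E_{ij}}$, realises these numbers as the finite-dimensional distributions of an explicitly constructed partially exchangeable array (Appendix B), applies de Finetti's theorem for such arrays to get a probability measure $\tilde{\nu}_{ij}$ for each pair, reads off $a_{ij}$ for free as the atom $\nu_{ij}(\{\Id\})$, and then must patch the $d(d-1)$ measures $u_{ij}^{-1}\nu^0_{ij}$ into a single $\nu$ via the commutation relation $u_{i_2j_2}\nu^0_{i_1j_1}(dU)=u_{i_1j_1}\nu^0_{i_2j_2}(dU)$, which in turn requires showing $\nu^0_{ij}(\{u_{ij}=0\})=0$. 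You instead tilt globally by $\sum_i(1-u_{ii})$, setting $M^\sharp_K=\sum_{p\neq q}\gamma_{K+E_{pq}}$, which produces a single finite measure $\tilde{\nu}$ at once and yields $\nu$ by dividing out $\sum_i(1-u_{ii})$ off the identity; this removes the patching and null-set analysis entirely and makes \eqref{eq:nuConstraint} immediate. The price is that the atoms no longer appear automatically, and your residual analysis is where the new work sits: I checked it and it holds --- $R=\gamma-N$ satisfies \eqref{eq:consistencyEq} and $\sum_{p\neq q}R_{K+E_{pq}}=0$ on off-diagonal $K$, hence the harmonicity $\sum_j R_{K+E_{jj}}=dR_K$; boundedness on each shape-slice and the boundary value $\inf_b\gamma_{b,K}$ follow from the monotonicity $\gamma_{K+E_{jj}}\le\gamma_K$ and dominated convergence for $N_K$ (using $\nu(\{\Id\})=0$); the uniform-increment walk sends every coordinate to infinity a.s., so the bounded-martingale argument pins $R$ to the constant $a_\kappa$; and positivity plus the taming identity kill $a_\kappa$ exactly on shapes with two or more off-diagonal moves, leaving the single-move atoms. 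The one step you should not treat as a black box is the ``multivariate Hausdorff moment theorem on $\Delta_{d-1}^d$'': the statement you need is true, but it is precisely the same mathematical content as the paper's de Finetti step (your row relations plus nonnegativity are Kolmogorov consistency plus partial exchangeability of a $[d]$-valued array), so it needs a genuine citation or proof --- the parenthetical appeal to Bernstein approximations is not one. In short: your taming by $\sum_i(1-u_{ii})$ buys a single measure with no patching, the paper's taming by $u_{ij}$ buys the atoms for free, and both ultimately rest on the same representation theorem.
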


With this lemma, whose proof is also provided in section \ref{sec:diceProcess}, we are ready to prove the main result of this section.

\begin{proposition} \label{th:cerwCharact}
	Let $\gamma$ be a collection of nonnegative real numbers as in Lemma \ref{lem:consistencyArray} and $x^{(\infty)} \in [d]^\infty$ given.
	Then there exists a $[d]^\infty$-valued process $X^{(\infty)}$ with $X^{(\infty)}(0) = x$ and such that for each $n$, $R_n X^{(\infty)}$ is a Markov chain on $[d]^n$ that only observes $(b, K)$-changes and whose rates are determined by the subarray
	\begin{equation} \label{eq:nbkRates}
		(\gamma_{b, K} : b \in \mathcal{S}_{d-1}(n), K \in \mathcal{S}_{d-1,d}(b) \setminus \{\diag(b)\})
	\end{equation}
	if and only if there exist a measure $\nu$ on $\Delta_{d-1}^d$ and an array of nonnegative constants $(a_{ij} : i \in [d], j \in [d] \setminus \{i\})$ such that \eqref{eq:nuConstraint} and \eqref{eq:transitionCharact} hold,
	in which case $X^{(\infty)}$ is a dice process.
\end{proposition}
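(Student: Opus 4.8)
The plan is to assemble the proposition from Lemma \ref{lem:consistencyArray} and Lemma \ref{lem:bkRatesExplicit}, viewing it as a bridge between the coordinate-level existence of the infinite process and the array-level integral representation of the rates. The pivotal observation is that the left-hand side of the biconditional---the existence of a $[d]^\infty$-valued process $X^{(\infty)}$ started at $x^{(\infty)}$ whose projection $R_n X^{(\infty)}$ is, for every $n$, the $[d]^n$-valued Markov chain with $(b,K)$-rates drawn from the subarray \eqref{eq:nbkRates}---is equivalent to the consistency of the array $\gamma$ in the sense of the definition preceding Lemma \ref{lem:consistencyArray}.

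First I would establish this equivalence. For the implication from existence to consistency, suppose such an $X^{(\infty)}$ exists. For $m < n$ we have $R_m X^{(\infty)} = R_m(R_n X^{(\infty)})$; since $R_n X^{(\infty)}$ is the $n$-chain and $R_m X^{(\infty)}$ is the $m$-chain, the restriction of the $n$-chain to its first $m$ coordinates has, conditionally on its starting configuration, the same law as the $m$-chain started at the corresponding configuration. This is exactly Definition \ref{consistency}, so $\gamma$ is consistent. For the converse, if $\gamma$ is consistent then the path laws of the chains $(X^{(n)})_{n \in \mathbb{N}}$ are compatible under the projections $R_m$, and Kolmogorov's extension theorem (Theorem 8.23 in \cite{kallenbergFoundationsModernProbability2021}), applied as in the proof of Lemma \ref{rem:ex_dice_infinity}, yields a $[d]^\infty$-valued process $X^{(\infty)}$ with $X^{(\infty)}(0) = x^{(\infty)}$ such that $R_n X^{(\infty)}$ agrees in law with $X^{(n)}$ for each $n$. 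The compatibility ensures that each $R_n X^{(\infty)}$ remains a Markov chain that only observes $(b,K)$-changes.

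Next I would apply Lemma \ref{lem:bkRatesExplicit}, which asserts that $\gamma$ is consistent if and only if there exist a measure $\nu$ on $\Delta_{d-1}^d$ and nonnegative constants $(a_{ij})$ satisfying \eqref{eq:nuConstraint} and \eqref{eq:transitionCharact}. Chaining this with the equivalence of the previous paragraph immediately yields the stated biconditional. For the concluding clause, when \eqref{eq:transitionCharact} holds with parameters $A = (a_{ij})$ and $\nu$, I would verify that $X^{(\infty)}$ is precisely the dice process with these parameters: by Lemma \ref{rem:ex_dice_infinity} such a dice process exists and its $n$-th projection is an $n$-dice process whose rates, in the form \eqref{aux_1}, depend only on the counts $\#\{m : x_m = i,\, y_m = j\}$. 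Rewriting \eqref{aux_1} in the $(b,K)$-parametrization of \eqref{eq:bkDef} recovers exactly the right-hand side of \eqref{eq:transitionCharact}, so the $(b,K)$-rates of the dice process coincide with $\gamma_{b,K}$. Since a Markov chain on the finite space $[d]^n$ is determined by its rates and initial state, $R_n X^{(\infty)}$ coincides in law with the $n$-dice process for every $n$, whence $X^{(\infty)}$ is the dice process with parameters $A$ and $\nu$.

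I expect the proposition itself to be largely a matter of synthesis, since the analytic and combinatorial heavy lifting is carried out in Lemmas \ref{lem:consistencyArray} and \ref{lem:bkRatesExplicit}. The step requiring the most care is the first equivalence together with the final rate-matching: one must confirm that coordinate-level consistency (restriction of the last coordinate) is the correct notion to which Lemma \ref{lem:bkRatesExplicit} applies, and that the combinatorial translation between the dice-process rates \eqref{aux_1} and the $(b,K)$-rates \eqref{eq:transitionCharact} is exact, so that the Kolmogorov-extended process is genuinely identified as a dice process rather than merely sharing its marginals.
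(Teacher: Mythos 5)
Your proposal is correct and follows essentially the same route as the paper: the paper's proof is likewise a short synthesis in which existence of $X^{(\infty)}$ is identified with consistency of $\gamma$, Lemmas \ref{lem:consistencyArray} and \ref{lem:bkRatesExplicit} supply the integral representation, and the process is recognized as a dice process by matching \eqref{eq:transitionCharact} with the dice-process rates \eqref{aux_1} (Remark \ref{rem:bkChangesDice}). The only cosmetic difference is that in the direction ``parameters imply existence'' the paper directly exhibits the dice process of Lemma \ref{rem:ex_dice_infinity} instead of passing through consistency of $\gamma$ and Kolmogorov extension, but given the lemmas the two orderings are interchangeable.
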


\begin{proof}
	Assume first that $\nu$ and $A = (a_{ij} : i \in [d], j \in [d] \setminus \{j\})$ exist and satisfy \eqref{eq:nuConstraint} and \eqref{eq:transitionCharact}.
	Let $X^{(\infty)}$ be a dice process with parameters $A$ and $\nu$.
	By Remark \ref{rem:bkChangesDice}, $R_n X^{(\infty)}$ only has $(b, K)$-changes at rate $\gamma_{b, K}$.

	Conversely, assume that $X^{(\infty)}$ exists and so $\gamma$ is consistent.
	By Lemmas \ref{lem:consistencyArray} and \ref{lem:bkRatesExplicit} there exist an array $A$ of nonnegative constants and a measure $\nu$ on $\Delta_{d-1}^d$ such that \eqref{eq:nuConstraint} and \eqref{eq:transitionCharact} hold.
	It is clear that then $X^{(\infty)}$ is a dice process, cf. \eqref{aux_1}.
\end{proof}

Before moving on to the de Finetti measure, we compare Proposition \ref{th:cerwCharact} with Theorem 3.3 in \cite{carinciConsistentParticleSystems2021}.
In that theorem, the authors characterize the rates of consistent configuration processes in which only a single particle jumps at each time.
In contrast, our framework characterizes the rates of consistent coordinate processes, where multiple particles may jump simultaneously.

\subsection{De Finetti measure and moment duality}
In this section we consider an exchangeable dice process $X^{(\infty)}$.
By the last assertion of Theorem \ref{thm:pExDP}, this means that in this section we ask that $X^{(\infty)}(0)$ is exchangeable.
By definition, this implies that for each $t \geq 0$, $X^{(\infty)}(t) = (X_n^{(\infty)}(t))_{n \in \mathbb{N}}$ is an exchangable sequence with values in $[d]$.
Therefore, by de Finetti's representation theorem, see Theorem 3.1 in \cite{aldousExchangeabilityRelatedTopics1985}, there exists a probability measure $\gamma_t$ on $\Delta_{d-1}$, seen as the set of probability measures on $[d]$, such that
\begin{equation} \label{eq:introdeFinetti}
	\mathbb{P}\Bigl( \bigcap_{n = 1}^N \{X_n^{(\infty)}(t) = i_n\} \Bigr) = \int_{\Delta_{d-1}} \prod_{j = 1}^d u_j^{\#\{n \in [N] : i_n = j\}} \gamma_t(du)
\end{equation}
holds for every $N \in \mathbb{N}$ and each $i_1, \ldots, i_N \in [d]$.
Let $R(t)$ be a random vector on $\Delta_{d-1}$ with distribution $\gamma_t$, and let us rewrite \eqref{eq:introdeFinetti} as
\[
	\mathbb{P}\Bigl( \bigcap_{n = 1}^N \{X_n^{(\infty)}(t) = i_n\} \Bigr) = \mathbb{E}\Bigl[ \prod_{j = 1}^d R_j(t)^{\#\{n \in [N] : i_n = j\}} \Bigr] \,.
\]
Therefore, what \eqref{eq:introdeFinetti} tells us is that, conditional on $R(t)$,  $(X_{n}^{(\infty)}(t))_{n \in \mathbb{N}}$ is a sequence of independent and identically distributed random variables such that $R_i(t)$ is the probability that, for any fixed $n \in \mathbb{N}$, $X_n^{(\infty)}(t)$ is equal to $i$.

We remark that this is equivalent to studying the frequency process, defined as the asymptotic proportion of Markov chains that are in a state $i$ for each $i \in [d]$.
Indeed, by exchangeability at a fixed time $t \geq 0$, letting $R(t)$ be as in the previous paragraph, then
\[
	R(t) = \lim_{n \to \infty} \frac{1}{n} \sum_{j = 1}^n \delta_{X_j^{(\infty)}(t)}
\]
almost surely in the weak topology of probability measures on $[d]$ by Theorem 3.1 and Lemma 2.15 of \cite{aldousExchangeabilityRelatedTopics1985}.
This is readily seen to be equivalent to the almost sure convergence
\begin{align}\label{def_r}
	R_i(t) = \lim_{n \to \infty} \frac{\#\{j \in [n] : X_j^{(\infty)}(t) = i\}}{n}
\end{align}
for all $i \in [d]$, so $R(t) := (R_1(t), \ldots, R_d(t)) \in \Delta_{d-1}$ can also be understood as the frequency process at time $t$.
We call $R(t)$ the \emph{de Finetti measure} of $X^{(\infty)}(t)$.
We are interested in the random process $R := (R(t))_{t \geq 0}$ with values in $\Delta_{d-1}$ which, abusing notation, we also deem as \emph{de Finetti measure}. In the next result we extend the convergence in \eqref{def_r} to the space $D([0, T], \Delta_{d-1})$ equipped with the Skorokhod J1 topology, and characterize the process $R$ as the unique solution to a SDE. We defer its proof to Appendix \ref{Prop_2.2}.

\begin{proposition}\label{prop:convergence_deFinetti}
	The sequence of processes $(R^{(n)})_{n \geq 1}$, with
	\[
		R_i^{(n)}(t) := \frac{\#\{l \in [n] : X_l^{(\infty)}(t) = i\}}{n}
		\quad\text{for $i \in [d]$ and $t \geq 0$,}
	\]
	converges weakly in the space $D([0, T], \Delta_{d-1})$ equipped with the Skorokhod J1 topology, to the process $R$ defined in \eqref{def_r}. Additionally $R$ is the unique strong solution to
	\begin{equation} \label{eq:deFinettiSDE}
		dR_i(t) = \sum_{j \in [d] \setminus \{i\}} \bigl( a_{ji} R_j(t) - a_{ij} R_i(t) \bigr) dt
		+ \int_{\Delta_{d-1}^d} \sum_{j \in [d] \setminus \{i\}} \bigl( u_{ji} R_{j}(t-) - u_{ij} R_i(t-) \bigr) N(dt, dU) \,,
	\end{equation}
	where $N$ is a Poisson random measure on $\mathbb{R}_+ \times \Delta_{d-1}^d$ whose intensity measure is $\lambda \otimes \nu$.
\end{proposition}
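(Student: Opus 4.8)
The plan is to establish the two assertions of Proposition \ref{prop:convergence_deFinetti} separately: first the functional weak convergence $R^{(n)} \Rightarrow R$ in $D([0,T],\Delta_{d-1})$, and then the identification of the limit as the unique strong solution to the SDE \eqref{eq:deFinettiSDE}. For the convergence, I would use the standard machinery for weak convergence of Markov processes via generators (Ethier--Kurtz). The key observation is that each $R^{(n)}$ is itself a Markov process on the finite lattice $\frac{1}{n}\mathcal{S}_{d-1}(n) \subset \Delta_{d-1}$, since by the $(b,K)$-change structure the rates $\tilde\gamma^{(n)}_{x,y}$ depend on $x,y$ only through the empirical counts, which are exactly $nR^{(n)}$. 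So I would write down the generator $\mathcal{A}^{(n)}$ of $R^{(n)}$ acting on smooth test functions $f \in C^2(\Delta_{d-1})$, using the explicit $(b,K)$ rates from \eqref{eq:transitionCharact}, and show it converges to the generator $\mathcal{A}$ associated to \eqref{eq:deFinettiSDE}.

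\medskip

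\noindent Concretely, a single-particle move (the $a_{ij}$ part) shifts $R^{(n)}$ by $\frac{1}{n}(e_j - e_i)$ at rate $\approx n R_i^{(n)} a_{ij}$ (there are $b_i \approx nR_i$ particles of type $i$, each jumping $i\to j$ at rate $a_{ij}$); summing these and Taylor-expanding $f$ to first order gives the drift term $\sum_{j\neq i}(a_{ji}R_j - a_{ij}R_i)\partial_i f$, which is the generator of the deterministic part of \eqref{eq:deFinettiSDE}. The coordinated moves are more delicate: for a sampled dice $U$, each of the $b_i \approx nR_i$ type-$i$ particles independently rolls and a law-of-large-numbers effect kicks in, so that the fraction of type-$i$ particles moving to $j$ concentrates at $u_{ij}$. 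Hence a coordinated event relocates the empirical measure from $R$ to (approximately) the vector with $i$-th coordinate $\sum_{j} u_{ji}R_j$, i.e.\ $R \mapsto U^{\top}R$, which is precisely the jump $R_i \mapsto R_i + \sum_{j\neq i}(u_{ji}R_j - u_{ij}R_i)$ appearing under $N(dt,dU)$ in \eqref{eq:deFinettiSDE}. Making this LLN-concentration rigorous at the level of generator convergence, uniformly enough to handle the integrability condition \eqref{eq:nuConstraint} near the diagonal of $\Delta_{d-1}^d$ (where $\nu$ may have infinite mass but $\sum_i(1-u_{ii})$ is integrable), is the \textbf{main obstacle}; the small coordinated jumps must be controlled so that their cumulative contribution matches the integral term without blow-up. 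I would split $\nu$ into the part bounded away from the diagonal (finitely many jumps on $[0,T]$, handled directly) and the part near the diagonal (where jumps are small, $O(1-u_{ii})$, and a dominated-convergence argument using \eqref{eq:nuConstraint} controls the error in the generator expansion).

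\medskip

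\noindent Having matched the generators and checked that the martingale problem for $\mathcal{A}$ is well-posed, I would invoke the generator convergence criterion together with compact containment (automatic here since $\Delta_{d-1}$ is compact) to conclude $R^{(n)}\Rightarrow R$ in the Skorokhod J1 topology, and that any limit point solves the martingale problem associated to \eqref{eq:deFinettiSDE}. For the second assertion, I would verify that \eqref{eq:deFinettiSDE} has a unique strong solution: the drift is linear hence globally Lipschitz, and the jump coefficient $(i,U) \mapsto \sum_{j\neq i}(u_{ji}R_j - u_{ij}R_i)$ is linear in $R$ with $\int_{\Delta_{d-1}^d}\sum_i |\cdot|\,\nu(dU)$ controlled by \eqref{eq:nuConstraint}, so that the standard existence-and-uniqueness theory for SDEs driven by Poisson random measures with Lipschitz coefficients and integrable intensity (e.g.\ Ikeda--Watanabe) applies on the compact domain $\Delta_{d-1}$. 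Since the coordinated jump maps $\Delta_{d-1}$ into itself (each $U^\top R$ is again a probability vector because $U$ is stochastic) and the drift is tangent to the simplex, the solution stays in $\Delta_{d-1}$, so the SDE is well-posed there. Uniqueness of the martingale problem then forces the weak limit $R$ to coincide in law with the unique strong solution, completing the proof.
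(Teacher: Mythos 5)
Your proposal matches the paper's proof in both structure and substance: the paper likewise establishes weak convergence by writing the generator of the empirical-frequency chain $R^{(n)}$ on $n^{-1}\mathcal{S}_{d-1}(n)$ (single-particle moves yielding the drift after a Taylor expansion, coordinated moves yielding the jump $R \mapsto U^T R$ via multinomial mean/variance bounds whose errors are $O(1/n)$ and integrable against $\nu$ by \eqref{eq:nuConstraint}), and proves well-posedness of \eqref{eq:deFinettiSDE} by Lipschitz/Gronwall estimates. The only cosmetic differences are that the paper controls the generator error by direct multinomial second-moment computations rather than by splitting $\nu$ near and away from the diagonal, and constructs the SDE solution by hand---truncating $\nu$ to sets of finite mass and passing to the limit via Gronwall and Borel--Cantelli---where you invoke off-the-shelf Lipschitz theory for Poisson-driven SDEs.
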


The corresponding generator $\mathcal{A}$ acts formally on functions $f \in C^2(\Delta_{d-1})$ by
\begin{equation} \label{eq:generatorFinetti}
	\begin{split}
		\mathcal{A} f(r)
		 & = \sum_{i \in [d]} \sum_{j \in [d] \setminus \{i\}} a_{ij} r_i (\partial_j - \partial_i) f(r)
		+ \int_{\Delta_{d-1}^d} \bigl( f(U^T r) - f(r) \bigr) \nu(dU)                                      \\
		 & = \sum_{i \in [d]} \sum_{j \in [d] \setminus \{i\}} (a_{ji} r_j - a_{ij} r_i) \partial_{i} f(r)
		+ \int_{\Delta_{d-1}^d} \bigl( f(U^T r) - f(r) \bigr) \nu(dU) \,.
	\end{split}
\end{equation}

As a limit of frequency processes, it is natural to expect $R$ to satisfy a moment duality.
This is, that there exists a Markov process $N = (N_t)_{t \geq 0}$ with values in $\mathbb{N}_0^d$ such that
\[
	\mathbb{E}_r\Bigl[ \prod_{i = 1}^d R_i(t)^{b_i} \Bigr] = \mathbb{E}_{b}\Bigl[ \prod_{i = 1}^d r_i^{N_i(t)} \Bigr] \,,
\]
holds for any $r \in \Delta_{d - 1}$ and $b \in \mathbb{N}_0^d$.
In the previous display, $\mathbb{E}_r$ means that we take the expectation with respect to the de Finetti measure started at $r \in \Delta_{d-1}^d$ and $\mathbb{E}_b$ means that we take the expectation with respect to $N$ started at $b \in \mathbb{N}_0^d$.
The existence of a moment dual holds under an assumption that resembles a reversibility condition, as is stated in the following result.
Moreover, under this condition, the dual process is the counting process of another dice process.

\begin{proposition}\label{prop:duality}
	Assume that $\sum_{j \in [d] \setminus \{i\}} (a_{ij} - a_{ji}) = 0$ for all $i \in [d]$ and that $\nu$ has support on doubly stochastic matrices.
	Then $R$ has a moment dual $N$, which is a Markov process on $\mathbb{N}_0^d$ with transition rates
	\[
		q_{bb'} = \begin{dcases*}
			b_i a_{ji}                                               & if $b' = b + e_j - e_i$,                                              \\
			\prod_{i = 1}^d \binom{n_i}{k_i} \widehat{\gamma}_{b, K} & if $b_i' = \sum_{j = 1}^d k_{ji}$ for $K \in \mathcal{S}_{d-1,d}(b)$, \\
			0                                                        & otherwise.
		\end{dcases*}
	\]
	In the previous expression,
	\[
		\widehat{\gamma}_{n, K}
		:= \int_{\Delta_{d-1}^d} \prod_{i = 1}^d \prod_{j = 1}^d u_{ij}^{k_{ji}} \mathbf{T}^{\mathrm{transp}}\nu(dU)
	\]
	with $\mathbf{T}^{\mathrm{transp}}\nu(dU)$ being the pushforward measure of $\nu$ under the mapping $U \mapsto U^T$.
\end{proposition}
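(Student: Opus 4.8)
The plan is to establish the moment duality through the generator criterion. With the duality function $H(r, b) := \prod_{i=1}^d r_i^{b_i} = r^b$, it suffices to exhibit a Markov process $N$ on $\mathbb{N}_0^d$ whose generator $\mathcal{B}$, acting on the $b$-variable, satisfies $\mathcal{A} H(\cdot, b)(r) = \mathcal{B} H(r, \cdot)(b)$ for all $r \in \Delta_{d-1}$ and $b \in \mathbb{N}_0^d$, where $\mathcal{A}$ is the generator of $R$ from \eqref{eq:generatorFinetti}. Since $H(\cdot, b)$ is a polynomial, hence bounded and smooth on the simplex, and (as checked below) lies in the domain of $\mathcal{A}$, while $N$ will turn out to be confined to a finite state space, the generator relation upgrades to the stated moment identity by a standard duality theorem (see, e.g., Ethier--Kurtz, or the martingale characterisation of duality). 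The candidate dual is the counting process $N_i(t) = \#\{\ell : \tilde{X}_\ell(t) = i\}$ of a dice process $\tilde{X}$ with transposed parameters $\tilde{a}_{ij} := a_{ji}$ and $\tilde{\nu} := \mathbf{T}^{\mathrm{transp}} \nu$; proving the generator relation then simultaneously yields the ``moreover'' claim that $N$ is the counting process of a dice process.

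First I would compute the drift (individual-motion) part of $\mathcal{A} H(\cdot, b)$. Using $\partial_i r^b = b_i r^{b - e_i}$ in the second line of \eqref{eq:generatorFinetti} gives $\sum_i \sum_{j \neq i} (a_{ji} r_j - a_{ij} r_i) b_i r^{b - e_i} = \sum_i \sum_{j \neq i} b_i a_{ji}\, r^{b + e_j - e_i} - \bigl(\sum_i \sum_{j \neq i} b_i a_{ij}\bigr) r^b$. The first sum is exactly the contribution of the single-particle transitions $b \to b + e_j - e_i$ at rate $b_i a_{ji}$, matching the first line of the claimed rates $q_{bb'}$. To turn the second term into the correct diagonal of the dual generator I use the hypothesis $\sum_{j \neq i}(a_{ij} - a_{ji}) = 0$, which lets me replace $\sum_{j \neq i} a_{ij}$ by $\sum_{j \neq i} a_{ji}$ for each $i$, so that the whole drift part collapses to $\sum_i \sum_{j \neq i} b_i a_{ji}\bigl(r^{b + e_j - e_i} - r^b\bigr)$, i.e. the individual part of $\mathcal{B}$.

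Next I would treat the coordinated part $\int_{\Delta_{d-1}^d}\bigl(H(U^T r, b) - H(r, b)\bigr)\nu(dU)$. Writing $(U^T r)_i = \sum_j u_{ji} r_j$ and applying the multinomial theorem to each factor $(U^T r)_i^{b_i}$ expands $H(U^T r, b)$ as a finite sum over $K \in \mathcal{S}_{d-1, d}(b)$ of $\prod_i \binom{b_i}{k_i}\bigl(\prod_{i, j} u_{ji}^{k_{ij}}\bigr) r^{c(K)}$, where $c(K)_i = \sum_j k_{ji}$ is the column sum, which is precisely the target $b'$ of a $(b, K)$-change of the counting process. Here the doubly-stochastic hypothesis enters decisively: because $\sum_i u_{ji} = 1$ (unit column sums) the total $\sum_K \prod_i \binom{b_i}{k_i}\prod_{i,j} u_{ji}^{k_{ij}} = \prod_i(\sum_j u_{ji})^{b_i} = 1$, so isolating the diagonal term $K = \diag(b)$, which contributes $(\prod_i u_{ii}^{b_i} - 1) r^b$, lets me recombine the finitely many $K$-terms as $\sum_{K \neq \diag(b)} \prod_i \binom{b_i}{k_i}\bigl(\int \prod_{i,j} u_{ji}^{k_{ij}}\nu(dU)\bigr)\bigl(r^{c(K)} - r^b\bigr)$. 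The integrals $\int \prod_{i,j} u_{ji}^{k_{ij}}\nu(dU)$ are exactly the $(b, K)$-rates of the dice process driven by $\tilde\nu = \mathbf{T}^{\mathrm{transp}}\nu$, which is a legitimate dice measure: $U^T$ is stochastic iff $U$ is doubly stochastic, and $\int \sum_i(1 - u_{ii})\,\tilde\nu(dU) = \int \sum_i(1 - u_{ii})\,\nu(dU) < \infty$ by \eqref{eq:nuConstraint} since the diagonal is transpose-invariant. Summing these against the multiplicities $\prod_i\binom{b_i}{k_i}$ reproduces the coordinated rates $q_{bb'}$ of the statement. Finally, since both individual and coordinated moves conserve $|b| = \sum_i b_i$, the process $N$ started at $b$ never leaves the finite set $\mathcal{S}_{d-1}(|b|)$, so it is a genuine non-explosive finite-state Markov chain.

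The main obstacle is the analytic bookkeeping of the coordinated term: one must check that $H(\cdot, b)$ lies in the domain of $\mathcal{A}$ even when $\nu$ is infinite, which hinges on the bounds $\abs{\prod_i u_{ii}^{b_i} - 1} \le \abs{b} \sum_i (1 - u_{ii})$ for the diagonal term and $\prod_{i,j} u_{ji}^{k_{ij}} \le \max_m(1 - u_{mm}) \le \sum_m(1 - u_{mm})$ for each off-diagonal $K$, both controlled by \eqref{eq:nuConstraint}; and one must keep separate the two distinct roles of transposition, namely the transpose $U^T r$ built into $\mathcal{A}$ through \eqref{eq:generatorFinetti} versus the transpose of the driving measure $\mathbf{T}^{\mathrm{transp}}\nu$ defining the dual. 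It is precisely the doubly-stochastic assumption that makes these compatible and guarantees that $\tilde\nu$ is conservative, while the condition $\sum_{j\neq i}(a_{ij}-a_{ji})=0$ reconciles the linear parts. Once the generator relation is in hand, invoking the duality theorem for the bounded function $H$ and the finite-state dual $N$ closes the argument.
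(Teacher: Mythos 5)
Your proposal is correct and follows essentially the same route as the paper: both establish generator duality by applying the generator to the monomials $r^b$, using the condition $\sum_{j \in [d]\setminus\{i\}}(a_{ij}-a_{ji})=0$ to collapse the individual-motion part and the multinomial expansion of $(U^T r)^b$ together with the doubly-stochastic assumption (unit column sums) to identify the coordinated rates as those of the counting process of a dice process driven by $\mathbf{T}^{\mathrm{transp}}\nu$, and then pass from generator duality to moment duality by citing a standard transfer result (the paper invokes the Feller property of $R$ plus Jansen--Kurt or Liggett; you invoke boundedness of the duality function plus the finiteness of the dual state space, which serves the same purpose). Your explicit integrability bounds for the jump term and the observation that the dual conserves $\abs{b}$ are welcome refinements of details the paper leaves out, but they do not constitute a different argument.
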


\subsection{Multitype \texorpdfstring{$\mathbf{\Lambda}$}{L}-coalescents: Characterisation in terms of dice processes}

The class of $\Lambda$-coalescents was introduced, independently,  by Donnelly and Kurtz \cite{donnellyParticleRepresentationsMeasureValued1999},
Pitman \cite{pitmanCoalescentsMultipleCollisions1999} and Sagitov \cite{Sagitov1999}.
It was shown to be characterized as the class of Markov processes with values in the partitions of $\mathbb{N}$ that satisfy:
\begin{itemize}
	\item The transitions consist of mergers of at least two blocks.
	\item \emph{Exchangeability}: The distribution of the process is invariant under permutation of labels.
	\item \emph{Consistency}: The restriction of the process to partitions of $[n]$ is Markovian for any $n$ and consistent in the sense of Definition \ref{def:consistent_process}.
	\item \emph{Asynchronicity}: No simultaneous occurrence of mergers.
\end{itemize}
Theorem 1 in \cite{pitmanCoalescentsMultipleCollisions1999} states that such a process is characterized by a finite measure $\Lambda$ on $[0, 1]$.
Indeed, given $\Lambda$, the rate at which $k$ blocks merge whenever there are $b$ blocks is given by
\[
	\lambda_{b, k} = \Lambda(\{0\}) 1_{\{k = 2\}} + \int_{(0, 1]} u^k (1 - u)^{b - k} \frac{\Lambda(du)}{u^2} \,.
\]

Recently, the extension of $\Lambda$-coalescents to multitype setup has garnered attention.
Johnston, Kyprianou and Rogers introduced the \emph{multitype-$\Lambda$-coalescent} (M-$\Lambda$-coalescent for short) in \cite{johnstonMultitypeLcoalescents2023} as processes with values in $d$-typed partitions of $\mathbb{N}$, meaning that each block of the partition is assigned one out of $d$ types.
The possible transitions consist of mergers of multiple blocks of possibly different types into one block of one type, and type switches of single blocks.
This extension was shown to be natural in \cite{gonzalezcasanovaMultitypeLcoalescentsContinuous2024} as the multitype coalescents proposed by Johnston et al. are in close relationship to multitype continuous state branching processes.
Specifically, \cite{gonzalezcasanovaMultitypeLcoalescentsContinuous2024} proved that M-$\Lambda$-coalescents arise as images of multitype continuous state branching process through what they call the \emph{Gillespie} transformation.

To describe M-$\Lambda$-coalescents in a more formal way, let us recall from \cite{gonzalezcasanovaMultitypeLcoalescentsContinuous2024} the space of $d$-typed partitions of $\mathbb{N}$.

\begin{definition}
	We say that $\bm{\pi} = \{(\pi_i, \mathfrak{t}_i) : i \in \mathbb{N}\}$ is a $d$-type partition of $\mathbb{N}$ if $\pi = \{\pi_i : i \in \mathbb{N}\}$ is a partition of $\mathbb{N}$ and for each $i \in \mathbb{N}$, $\mathfrak{t}_i \in [d]_0$ is the type of block $\pi_i$, where $\mathfrak{t}_i = 0$ if and only if $\pi_i = \varnothing$.
	We assume that the elements of $\pi$ are arranged by their minimal elements; this is $\min \pi_i \leq \min \pi_j$ for $i < j$, with the convention $\min \varnothing = \infty$, where there is strict inequality if $\pi_i \neq \varnothing$.
	Let $\mathbf{P}_d$ be the collection of all $d$-type partitions of $\mathbb{N}$.
	For $m \in \mathbb{N}$ and $\bm{\pi} \in \mathbf{P}_d$ we put $\bm{\pi}\vert_{[m]} := \{(\pi_i \vert_{[m]}, \mathfrak{t}_i 1_{\{\pi_i\vert_{[m]} \neq \varnothing\}}) : i \in \mathbb{N}\}$ for the restriction of $\bm{\pi}$ to $[m]$, where $\pi_i\vert_{[m]} := \pi_i \cap [m]$.
\end{definition}

Consider a given typed partition $\bm{\pi} \in \mathbf{P}_d$.
Let $\#\pi$ denote the number of non-empty blocks of $\bm{\pi}$ and let $J \subset [\#\pi]$ be given, where $[\infty]$ is understood as $\mathbb{N}$.
For $i \in [d]$ let us define $\coal_J^i(\bm{\pi})$ as the typed partition obtained from $\bm{\pi}$ by merging the blocks $\pi_j$ indexed by $j \in J$, and assigning this new block the type $i$.
Using this terminology, an M-$\Lambda$-coalescent has a formal infinitesimal generator $\mathcal{L}_{\mathrm{coal}}$ that acts on functions $f \in C(\mathbf{P}_d)$ as
\begin{equation}\label{eq:Lcoal}
	\mathcal{L}_{\mathrm{coal}} f(\bm{\pi}) = \sum_{i = 1}^d \sum_{J \subset [\#\pi]} \Bigl( f\bigl(\coal_J^i(\bm{\pi})\bigr) - f(\bm{\pi}) \Bigr) \lambda_{\bm{\pi}, J, i} \,,
\end{equation}
where, according to \cite{johnstonMultitypeLcoalescents2023}, for each $i \in [d]$,
\begin{equation} \label{eq:jkrRates}
	\begin{split}
		\lambda_{\bm{\pi}, J, i}
		 & = \sum_{k \in [d] \setminus \{i\}} \rho_{ik} 1_{\{\mathfrak{t}_j = k, j \in J\}} 1_{\{\abs{J} = 1\}}
		+ \rho_{ii} 1_{\{\mathfrak{t}_j = i, j \in J\}} 1_{\{\abs{J} = 2\}}                                                                                                            \\
		 & \quad + \int_{[0, 1]^d} \prod_{k = 1}^d u_k^{\abs{ \{j \in J : \mathfrak{t}_j = k\} }} (1 - u_k)^{\abs{ \{ j \in [\#\pi] \setminus J : \mathfrak{t}_j = k \} }} Q_i(du) \,,
	\end{split}
\end{equation}
where $\rho_{ik} \geq 0$ for every $k \in [d]$ and $Q_i$ is a measure on $[0, 1]^d$ without atom at zero such that
\[
	\int_{[0, 1]^d} \sum_{k = 1}^d u_{k}^{1 + \delta_{ik}} Q_i(du) < \infty \,.
\]
In \eqref{eq:jkrRates}, $\rho_{ik}$ represents the rate at which a block switches its type from $k$ to $i$ independently of the others, $\rho_{ii}$ the rate at which two blocks of type $i$ merge into a block of type $i$, and the integral term gives the rate at which blocks, of possibly different types, merge to form a block of type $i$.
Note that by construction in the M-$\Lambda$-coalescent, multiple type transitions of blocks cannot occur simultaneously.
This is in line with the idea that for $\Lambda$-coalescents there is asynchronity.  However,  if one interprets asynchronity as the absence of simultaneous (multiple) \emph{mergers},  a priori simultaneous type switches are not prohibited.
Considering models like the seedbank coalescent with simultaneous switches \cite{Blathetal2020} makes clear that this is also a valid generalisation of the definition of $\Lambda$-coalescents to the multitype set up, in particular if one has Pitman's characterisation in mind.
To be very precise we now ask that in our coalescent processes \emph{no other event may take place when a merger occurs}.
Thus we explicitly allow for simultaneous type switches (or simultaneous mutations, in a different interpretation).
Hence we study multitype coalescent processes in the sense of Pitman as presented at the beginning of this section.

We call the mathematical object \emph{multitype $\Lambda$-coalescent with multiple switching} (M-$\Lambda$-MS-coalescent for short), and formally define it as follows.

\begin{definition} \label{def:PMLcoal}
	A multitype $\Lambda$-coalescent with multiple switching $\mathbf{\Pi} = (\mathbf{\Pi}(t))_{t \geq 0}$ is a multitype coalescent process with values in $\mathbf{P}_d$ that is partially exchangeable, consistent, Markovian, and where no other event may take place at the same time that a merger occurs.
\end{definition}

For a multitype coalescent process $\mathbf{\Pi}$ we let $X_i(t)\in[d]_0$ denote the type of the $i$-th block of $\mathbf{\Pi}(t)$ at time $t \geq 0$.
Before proceeding, let us clarify the notion of partial exchangeability used in Definition \ref{def:PMLcoal}.
For a finite permutation $\sigma : \mathbb{N} \to \mathbb{N}$ we define $\mathbf{\Pi}_\sigma := (\mathbf{\Pi}_\sigma(t))_{t \geq 0}$ by considering that $\sigma(i)$ and $\sigma(j)$ are in the same block of type $l$ of $\mathbf{\Pi}_\sigma(t)$ if and only $i$ and $j$ are in the same block of type $l$ of $\mathbf{\Pi}(t)$.
Let $\mathbf{A}(0)$ be the partition of $\mathbb{N}$ induced by $(X_i(0))_{i \in \mathbb{N}}$, as in \eqref{eq:defIniPartition}.
Then the notion of partial exchangeability is with respect to $\mathbf{A}(0)$; this is, $\mathbf{\Pi}_{\sigma}$ has the same law as $\mathbf{\Pi}$ for every finite permutation $\sigma$ such that $\sigma(A) = A$ for all $A \in \mathbf{A}(0)$.

Before turning to the main result of this section, the characterization of M-$\Lambda$-MS-coalescents, let us use the dice process to formally model type switching of blocks in a process that takes values in $\mathbf{P}_d$.
Take $\bm{\pi} \in \mathbf{P}_d$ fixed.
For $\tilde{t} \in [d]^{\#\pi}$ define $\muta(\bm{\pi}, \tilde{\mathfrak{t}})$ as the typed partition obtained from $\bm{\pi}$ by replacing $\mathfrak{t}_j$ for $\tilde{\mathfrak{t}}_j$ for each $j \in [\#\pi]$.
We define the formal generator $\mathcal{L}_{\mathrm{dice}}$ acting on functions $f \in C(\mathbf{P}_d)$ as
\begin{equation} \label{eq:Ldice}
	\mathcal{L}_{\mathrm{dice}} f(\bm{\pi}) = \sum_{\tilde{\mathfrak{t}} \in [d]^{\#\pi}} \Bigl( f\bigl(\muta(\bm{\pi}, \tilde{\mathfrak{t}})\bigr) - f(\bm{\pi}) \Bigr) \gamma_{\bm{\pi}, \tilde{\mathfrak{t}}} \,.
\end{equation}

In the previous expression,
\begin{align*}
	\gamma_{\bm{\pi}, \tilde{\mathfrak{t}}}
	 & = \sum_{i = 1}^d \sum_{k \in [d] \setminus \{i\}} a_{ik}  \mathcal{I}_{i,k}(\mathfrak{t}, \tilde{\mathfrak{t}})
	+ \int_{\Delta_{d-1}^d} \prod_{i = 1}^d \prod_{k = 1}^d u_{ik}^{ \abs{ \{j \in [\#\pi] : \mathfrak{t}_j = i, \tilde{\mathfrak{t}}_j = k\} } } \nu(dU) \,,
\end{align*}
for an array $A = (a_{ik}, i \in [d], k \in [d] \setminus \{i\})$ of non-negative numbers and a measure $\nu$ on $\Delta_{d-1}^d$ taken as in section \ref{subsec:diceProcesses}, and $\mathcal{I}_{i,k}(\mathfrak{t}, \mathfrak{\tilde{t}})$ is equal to $1$ if there is a unique index $j \in [\#\pi]$ such that $\mathfrak{t}_j \neq \tilde{\mathfrak{t}}_j$, and for such index $\mathfrak{t}_j = i$ and $\tilde{\mathfrak{t}}_j = k$, and equal to $0$ otherwise, hence we are in the setting of dice processes.
Consider now the the formal generator $\mathcal{L}$ defined by
\begin{equation} \label{eq:PMLgen}
	\mathcal{L} = \mathcal{L}_{\mathrm{coal}} + \mathcal{L}_{\mathrm{dice}} \,.
\end{equation}
From now on, will assume that in \eqref{eq:jkrRates} $\rho_{ik} = 0$ for $i \neq k$, as these rates are interpreted as a type transition rates and we are modeling type transitions using the Dice process.
This assumption entails that $\mathcal{L}_{\mathrm{coal}}$ only encodes the coalescence mechanism of a process with values in $\mathbf{P}_d$.

While only formal in nature, this construction can be realised via the graphical construction of Remark \ref{rem:GC} using independent Poisson point processes for the coalescent part and for the type transition part.
Hence we know that M-$\Lambda$-MS-coalescents exist.

From now on, when we say that a coalescent mechanism and a type transition mechanism are independent, we mean that their underlying Poisson point processes are independent.
The following result states that all of the M-$\Lambda$-MS-coalescents can be written as the superposition of an independent M-$\Lambda$-coalescent and a dice process.

\begin{theorem} \label{thm:pmLambda}
	A process $\bm{\Pi}$ with values in $\mathbf{P}_d$ is a multitype $\Lambda$-coalescent with multiple switching if and only if it has a coalescent mechanism given by a multitype-$\Lambda$-coalescent and a type transition mechanism given by a dice process, which is independent of the coalescent mechanism.
	This is, a coalescent process $\bm{\Pi}$ is a multitype $\Lambda$-coalescent with multiple switching if and only if its generator $\mathcal{L}$ satisfies decomposition \eqref{eq:PMLgen} for some generators $\mathcal{L}_{\mathrm{coal}}$ and $\mathcal{L}_{\mathrm{dice}}$ of the form given in \eqref{eq:Lcoal} and \eqref{eq:Ldice}, respectively.
\end{theorem}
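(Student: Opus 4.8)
The plan is to prove the two implications separately, and the direction asserting that the decomposition is \emph{sufficient} is the routine one. Suppose $\mathcal{L} = \mathcal{L}_{\mathrm{coal}} + \mathcal{L}_{\mathrm{dice}}$ with $\mathcal{L}_{\mathrm{coal}}$ of the form \eqref{eq:Lcoal} (with $\rho_{ik}=0$ for $i\neq k$) and $\mathcal{L}_{\mathrm{dice}}$ of the form \eqref{eq:Ldice}, each realised through its own family of Poisson point processes as in Remark \ref{rem:GC}, with the two families independent. I would verify the four clauses of Definition \ref{def:PMLcoal} in turn: Markovianity is immediate from the graphical construction; partial exchangeability and consistency hold because the M-$\Lambda$-coalescent is exchangeable (hence partially exchangeable) and consistent, the dice process is partially exchangeable and consistent by Theorem \ref{thm:pExDP}, and both properties are preserved under the superposition of two independently driven mechanisms since the generator is additive. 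The final clause, that no other event occurs at a merger, is exactly the statement that the (single) merger atoms of the coalescent point processes almost surely avoid the atoms of the dice point processes, which is guaranteed by independence together with the fact that each point process has at most one atom at any given time.

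For the converse, that the decomposition is \emph{necessary}, let $\bm{\Pi}$ be an M-$\Lambda$-MS-coalescent with generator $\mathcal{L}$. The first step is to split $\mathcal{L}$. By the assumption that no other event may take place when a merger occurs, every elementary transition of $\bm{\Pi}$ is either a pure merger (several blocks coalesce and the resulting block receives a type, while the types of all non-participating blocks are left unchanged) or a pure type transition (the partition is preserved and only types switch, possibly in a coordinated fashion). Collecting the rates of the former into an operator $\mathcal{L}_{\mathrm{merge}}$ and those of the latter into $\mathcal{L}_{\mathrm{switch}}$, I obtain $\mathcal{L} = \mathcal{L}_{\mathrm{merge}} + \mathcal{L}_{\mathrm{switch}}$; the content of this step is precisely that the no-simultaneous-event hypothesis forbids any transition that is simultaneously a merger and a type change elsewhere.

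The second step is to identify $\mathcal{L}_{\mathrm{switch}}$ as a dice generator. With mergers switched off the blocks form a fixed family whose types evolve as a sequence of Markov chains on $[d]$ indexed by the blocks, and the partial exchangeability and consistency of $\bm{\Pi}$ descend to this block-type process: permutations of $\mathbb{N}$ fixing $\mathbf{A}(0)$ induce permutations of equally typed blocks, and restriction to $[n]$ restricts the block family compatibly. One then checks that the resulting rates depend only on how many blocks switch between each ordered pair of types, i.e. that they are $(b,K)$-changes, so that Proposition \ref{th:cerwCharact} (equivalently Theorem \ref{thm:pExDP}) forces the mechanism to be a dice process and gives $\mathcal{L}_{\mathrm{switch}} = \mathcal{L}_{\mathrm{dice}}$ of the form \eqref{eq:Ldice}. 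The third step identifies $\mathcal{L}_{\mathrm{merge}}$: with type switches removed it generates a partially exchangeable, consistent, Markovian process whose only transitions are mergers occurring one at a time, which is exactly the setting characterised in \cite{johnstonMultitypeLcoalescents2023}, so its rates take the form \eqref{eq:jkrRates} with $\rho_{ik}=0$ for $i\neq k$, i.e. $\mathcal{L}_{\mathrm{merge}} = \mathcal{L}_{\mathrm{coal}}$. Independence of the two mechanisms then follows by realising each summand through its own graphical construction and invoking uniqueness of the process attached to the generator $\mathcal{L}_{\mathrm{coal}} + \mathcal{L}_{\mathrm{dice}}$: the independent superposition has this generator and hence agrees in law with $\bm{\Pi}$.

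The hard part will be the second step. Translating the type dynamics of a coalescent --- where the natural ``particles'' are blocks whose number drops at merger times and which are relabelled afterwards --- into the stationary sequence-of-Markov-chains framework of Theorem \ref{thm:pExDP} requires care. In particular one must reconcile the coalescent notion of consistency (restriction to the integers $[n]$) with the coordinate-wise consistency of dice processes at the level of blocks, and check that partial exchangeability with respect to $\mathbf{A}(0)$ yields precisely the block-level partial exchangeability needed to apply the characterisation. Making this correspondence precise, and confirming that the type-transition rates genuinely reduce to $(b,K)$-changes, is where the bulk of the technical work lies.
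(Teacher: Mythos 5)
Your overall architecture coincides with the paper's: split the dynamics into a merger mechanism and a type-switching mechanism, identify the former via the characterisation of Johnston, Kyprianou and Rogers, identify the latter via Proposition \ref{th:cerwCharact}, and obtain independence from the fact that mergers and switches never occur at the same time. Your sufficiency argument is fine (and more explicit than the paper's, which simply appeals to the graphical construction preceding Definition \ref{def:PMLcoal}). The problem is in the necessity direction, at exactly the step you defer as ``the hard part'': the plan you sketch for it would fail as stated, and what is missing is the central device of the paper's proof, not a routine verification.

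The obstruction is that the merger/switch decomposition, unambiguous for the infinite process, does \emph{not} commute with restriction to $[n]$: if the block containing $1$ merges with blocks that all lie inside $\{n+1, n+2, \ldots\}$ and the merged block receives a new type, then $\mathbf{\Pi}\vert_{[n]}$ sees a pure type switch of block $1$. Consequently the rates of switch-looking transitions of $\mathbf{\Pi}\vert_{[n]}$ are contaminated by coalescent contributions, and they do \emph{not} satisfy the dice consistency equation \eqref{eq:consistencyEq}: a visible switch in $[n]$ has continuations in $[n+1]$ that include visible \emph{mergers} (block $n+1$ joining the hidden coalescence), so the sum rule over switch continuations fails. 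For the same reason you cannot argue that consistency and partial exchangeability ``descend'' to a block-type process obtained ``with mergers switched off''; no such sub-process can be read off from the restrictions. The paper resolves precisely this point with a Schweinsberg-type argument: it introduces the stopping times $T_1$ and $T_{\{1,2\}}$, conditions on the first-jump events $E_n$ (resp. $F_n$), uses Markovian consistency to assemble infinite random typed partitions $\mathbf{\Theta}^{(\infty)}$ and $\mathbf{\Xi}^{(\infty)}$, and separates the mechanisms by the \emph{infinite-system} events $\mathsf{C}_k$ (a genuine merger occurred, possibly invisible in $[n]$) versus $\mathsf{D}_k$ (no merger anywhere). Only after this separation do the two rate families $\lambda_n$ and $\gamma_n$ of \eqref{eq:lrateCoal}--\eqref{eq:lrateDice} satisfy their respective consistency relations \eqref{eq:lambdaRatesCons} and \eqref{eq:diceRatesCons}, and only then does partial exchangeability, via \eqref{eq:aux_pmlcProof}, reduce them to count-dependent arrays to which Theorem 1.7 of \cite{johnstonMultitypeLcoalescents2023} and Proposition \ref{th:cerwCharact} apply. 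Supplying this separation argument (or an equivalent one) is the missing content of your second step; without it the proof is incomplete.
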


Contrast this result with the works of Dai Pra et al. and Flamm and Möhle, which explore different classes of multitype coalescent processes.
In the recent work of \cite{daipraMultitypeXcoalescentsStructured2025}, a class of multitype $\Xi$-coalescents is introduced that allows for simultaneous coalescent events and type switching, under the condition that all blocks participating in such events are of the same type.
The rates are specified in a probabilistic way, rather than through an explicit integral representation.
This construction further illustrates the need for integral characterizations of multitype coalescent processes, particularly in settings involving both mergers and mutation.

Meanwhile, in Proposition 2 and Theorem 3 of \cite{flammGenealogyMultitypeCannings2025}, integral representations are provided for the rates of a specific subclass of multitype coalescent processes. Specifically, the class they consider allows for multiple coalescent events to occur simultaneously, with blocks of different types potentially coalescing at the same time, with the caveat that blocks of type $k$ must merge into a block of type $k$.
That is, their framework allows simultaneous mergers but does not account for type switching (or mutation) during coalescent events.

In contrast, our work provides an if and only if characterization of multitype coalescent processes that are consistent, partially exchangeable, and Markovian, and in which either:
\begin{enumerate}
	\item a single coalescent event may occur at a given time, with no other event taking place simultaneously, and where blocks of different types may participate in the coalescence; or
	\item blocks may undergo simultaneous type switching without coalescing.
\end{enumerate}
In particular, using Proposition \ref{th:cerwCharact}, we give an explicit integral representation for the rates governing the latter kind of event.
This complements the results of \cite{flammGenealogyMultitypeCannings2025} and \cite{daipraMultitypeXcoalescentsStructured2025}, and contribute to the broader effort of studying multitype coalescent processes in the direction of integral representations of the rates governing their dynamics.

\section{Examples of dice processes}
Theorem \ref{th:cerwCharact} gives us a thorough description on the rates that govern a family of consistent and exchangeable Markov chains.
In particular, expression \eqref{eq:transitionCharact} tells us that the rates are composed of an independent part for each particle, given by the array $A$, and a coordinated part driven by $\nu$.

In this section we will give various examples of different dice processes in order to show that it is a quite flexible model, and not restricted to the coalescent setup we considered in the previous section.
The first two examples are the extreme cases of total independence and total dependence between the Markov chains.
Afterwards we will construct a class of dice processes such that the de Finetti measure of one of these dice processes is a \emph{stochastic exchange model}, recently introduced in \cite{kimSpectralGapKMP2025} and \cite{caputoUniversalCutoffPhenomenon2025}.
Then we will provide an example that extends the \emph{binomial splitting process} studied in \cite{quattropaniMixingAveragingProcess2023}, and whose de Finetti measure extends the \emph{block averaging process} studied in \cite{caputoRepeatedBlockAverages2024}.
Finally, we present the Dirichlet splitting process, the harmonic multinomial splitting process, and the discrete multiagent instant exchange model.
The reason to introduce these examples of dice processes is beacuse their de Finetti measures provide natural extensions of the Kipnis--Marchioro--Presutti model, the harmonic process and immediate exchange model studied in \cite{kimSpectralGapKMP2025}.
In order to present the examples, we will recall that for each $i, j \in [d]$, $E_{ij}$ is the elementary matrix with a $1$ at position $(i, j) \in [d]^2$ and zero everywhere else.

\paragraph{Independent Markov chains}
As the first example we consider the case of a countable collection of independent Markov chains with a common collection of rates $A = (a_{ij})_{i \in [d], j \in [d] \setminus \{i\}}$, where $a_{ij}$ is the rate of any Markov chain goes from state $i$ to state $j$.
By the assumed independence, this collection is obviously consistent.
Moreover, because of the assumption on the rates, it is clear that the collection is partially exchangable.
In this case the dice process has $\nu \equiv 0$, as there is no coordination.

\paragraph{Totally dependent Markov Chains}
In constrast to the first example, in this second example we consider that all of the Markov chains always jump at the same time, and at a jump time all of the Markov chains that are at the same state jump to the same place.
Thus, we first note that there is no independent component, meaning that $a_{ij} = 0$ for all $i \in [d]$ and $j \in [d] \setminus \{i\}$.
Secondly, that all of the Markov chains at a state jump to the same place means that $\nu$ has support on the set of matrices
\[
	\Bigl\{ \sum_{i = 1}^d E_{i f(i)} : f \text{ is a function from } [d] \text{ to } [d] \Bigr\} \,.
\]
Namely, if for $f : [d] \to [d]$ we define the matrix
\[
	U^{(f)} := \sum_{i = 1}^d E_{i f(i)}\,,
\]
then
\[
	\nu(dU) = \sum_{f : [d] \to [d]} c_f \delta_{U^{(f)}}(dU) \,,
\]
where $(c_f)_{f : [d] \to [d]}$ is a collection of nonnegative constants.
Intuitively, at rate $c_f$, all of the Markov chains at state $i$ move to state $f(i)$ for each $i \in [d]$.
In this case, the rate at which $X_1$ moves from state $i$ to state $j$ is
\[
	\int_{\Delta_{d-1}^d} u_{ij} \nu(dU) = \sum_{f : [d] \to [d]} c_f 1_{\{f(i) = j\}} < \infty \,.
\]

Having taken care of the most extreme, in terms of independence and dependence, of the examples, we will consider that $a_{ij} = 0$ for all $i \in [d]$ and all $j \in [d] \setminus \{i\}$ for the rest of the section.
This means that in the examples coordination is the force that drives the processes.
In addition we will consider a given fixed vector $\eta \in (0, \infty)^d$.

\paragraph{Discrete exchange models}
The stochastic exchange model introduced in \cite{kimSpectralGapKMP2025} is a Mar\-kov process $R = (R(t))_{t \geq 0}$ with values in $\Delta_{d-1}$ with infinitesimal generator defined by its action on any $f \in C(\Delta_{d-1})$ by
\[
	\mathcal{A}_{\mathrm{sem}} f(r)
	= \sum_{i = 1}^d \sum_{j \in [d] \setminus \{i\}} \int_{[0, 1]^2} \bigl( f\bigl(r + \bigl( (1 - s) r_i - (1 - v) r_j \bigr) (e_j - e_i) \bigr) - f(r) \bigr) \beta_{ij}(ds, dv) \,,
\]
where for each $i, j \in [d]$ with $i \neq j$, $\beta_{ij}$ is a measure on $[0, 1]^2$ that satisfies the integrability condition $\int_{[0, 1]^2} ((1-s) + (1-v)) \beta_{ij}(ds, dv) < \infty$.
To define the discrete exchange model consider the collection of measures $(\beta_{ij})_{i \in [d], j \setminus \{i\}}$ on $[0, 1]^2$ and define
\[
	\nu_{\mathrm{sem}}(dU) = \sum_{i = 1}^d \sum_{j \in [d] \setminus \{i\}} \beta_{ij}(ds, dv) \delta_{U^{(i,j,s,v)}}(dU) \,,
\]
where
\[
	U^{(i,j,s,v)} = \sum_{k \in [d] \setminus \{i, j\}} E_{kk} + (s E_{ii} + (1-v) E_{ji}) + ((1 - s) E_{ij} + v E_{jj}) \,.
\]
In this case, any one of the Markov chains goes from state $i$ to state $j$, with $i \neq j$, at rate
\[
	\int_{\Delta_{d-1}^d} u_{ij} \nu_{\mathrm{sem}}(dU)
	= \int_{[0, 1]^2} (1 - s) \beta_{ij}(ds, dv) + \int_{[0, 1]^2} (1 - v) \beta_{ji}(ds, dv) < \infty \,.
\]
By the latter inequality it transpires that we can define a dice process with parameter $\nu_{\mathrm{sem}}$.

In particular, what is interesting about this process is that its de Finetti measure is the stochastic exchange model.
To see this, note that
\[
	U^{(i,j,s,v)} = \sum_{k = 1}^d E_{kk} + [ (s - 1) E_{ii} + (1 - v) E_{ji} ] + [(1 - s) E_{ij} + (v-1) E_{jj}] \,,
\]
so
\[
	(U^{(i,j,s,v)})^T = \sum_{k = 1}^d E_{kk} + [ (s - 1) E_{ii} + (1 - v) E_{ij} ] + [(1 - s) E_{ji} + (v-1) E_{jj}] \,.
\]
Therefore, for any $r \in \Delta_{d-1}$,
\begin{align*}
	(U^{(i,j,s,v)})^T r
	 & = r + (-(1 -s) r_i + (1 -v) r_j) e_i + ((1 - s) r_i - (1 - v) r_j) e_j \\
	 & = r + ((1 - s) r_i - (1 - v) r_j) (e_j - e_i)\,.
\end{align*}
Thus, by considering \eqref{eq:generatorFinetti} with $\nu_{\mathrm{sem}}$ we note that
\begin{align*}
	\mathcal{A} f(r)
	 & = \int_{\Delta_{d-1}^d} (f(U^T r) - f(r)) \nu_{\mathrm{sem}}(dU)                                                          \\
	 & = \sum_{i = 1}^d \sum_{j \in [d] \setminus \{i\}} \int_{[0, 1]^2} ( f((U^{(i, j, s, v)})^T r) - f(r) ) \beta_{ij}(ds, dv) \\
	 & = \mathcal{A}_{\mathrm{sem}} f(r)\,,
\end{align*}
proving that the de Finetti measure of our dice process is the stochastic exchange model.

\paragraph{Multinomial splitting process}
For this example consider a collection of nonnegative constants $(c_J)_{J \subset [d], \# J \geq 2}$.
At rate $c_J$, the set $J$ is chosen.
Given the set $J$, for each $j \in J$, all of the Markov chains that are at any state $i \in J$ jump to state $j \in J$ with probability $\eta_j / \eta(J)$, where $\eta(J) = \sum_{j \in J} \eta_j$.
Using this description, the measure $\nu$ is seen to be
\[
	\nu_{\mathrm{msp}}(dU) = \sum_{J \subset [d] : \# J \geq 2} c_J \delta_{U^{(J)}}(dU) \,,
\]
where
\[
	U^{(J)} = \sum_{k \in [d] \setminus J} E_{kk} + \sum_{i \in J} \sum_{j \in J} \frac{\eta_j}{\eta(J)} E_{ij} \,.
\]
Under this measure,
\[
	\int_{\Delta_{d-1}^d} u_{ij} \nu(dU)
	= \sum_{J \subset [d] : J \supset \{i, j\}} c_J \frac{\eta_j}{\eta(J)} < \infty
\]
for each $i, j \in [d]$ such that $i \neq j$.
Thus it is a valid measure for a dice process which we call multinomial splitting process.
By noting that for each set of indices $J$,
\[
	(U^{(J)})^T = \sum_{k \in [d] \setminus J} E_{kk} + \sum_{j \in J} \frac{\eta_j}{\eta(J)} \sum_{i \in J} E_{ji} \,,
\]
we see that the de Finetti measure of the multinomial splitting process $R$ is such that if at a jump $\tau$, the set $J \subset [d]$ with $\# J \geq 2$ is chosen, then
for each $i \in J$, $R_{i}(\tau-)$ is replaced by $(\eta_{i} / \eta(J)) \sum_{j \in J} R_{j}(\tau-)$.
In particular, if $\eta$ is a multiple of the vector with all entries equal to $1$, the de Finetti process provides a continuous version of the block average process studied in \cite{caputoRepeatedBlockAverages2024}.
In a similar fashion, if $c_J \textcolor{blue}{=} 0$ for all $J \subset [d]$ such that $\# J \geq 3$, we obtain the binomial splitting process and averaging process, as the dice process and de Finetti measure respectively, studied in \cite{quattropaniMixingAveragingProcess2023}.

\paragraph{Dirichlet splitting process}
This example could be considered a randomized version of the multinomial splitting process.
Take the collection of nonnegative constants $(c_J)_{J \subset [d], \# J \geq 2}$.
Now at rate $c_J$, the set of indices $J$ is chosen, and a collection of independent random variables $(W_j)_{j \in J}$ is sampled such that $W_j$ has gamma distribution with form parameter $\eta_j$ and rate parameter $1$.
Then, each Markov chain that is at any state $i \in J$ moves to state $j \in J$ with probability $W_j / W(J)$, where $W(J) := \sum_{j \in J} W_j$.
By the relation between the gamma distribution and the Dirichlet distribution, in this case
\[
	\nu_{\mathrm{dsp}}(dU) = \sum_{J \subset [d] : \# J \geq 2} c_J \frac{\Gamma(\eta(J))}{\prod_{j \in J} \Gamma(\eta_j)} \prod_{j \in J} s_j^{\eta_j - 1} \lambda_{\Delta_J}(ds) \delta_{U^{(J, s)}}(dU) \,,
\]
where $\lambda_{\Delta_J}$ is the Lebesgue measure on the set $\Delta_J := \{s \in \Delta_{d - 1} : s_k = 0\ \forall k \in [d] \setminus J\}$, and
\[
	U^{(J, s)} = \sum_{k \in [d] \setminus J} E_{kk} + \sum_{i \in J} \sum_{j \in J} s_{j} E_{ij} \,.
\]
We now have that for any $i, j \in [d]$ with $i \neq j$,
\[
	\int_{\Delta_{d-1}} u_{ij} \nu_{\mathrm{dsp}}(dU)
	= \sum_{J \subset [d] : J \supset \{i, j\}} c_J \frac{\eta_j}{\eta(J)} < \infty\,.
\]
Thus we can define a dice process which we call the Dirichlet splitting process.
For this particular model, the de Finetti measure corresponds to a natural extension of the Kipnis--Marchioro--Presutti model studied by \cite{kimSpectralGapKMP2025}, which is a closed system version of the model studied in \cite{masiHiddenTemperatureKMP2024},  in the sense that we can consider averaging of more than two sites at the same time.

\paragraph{Harmonic splitting process}
For this model we will consider a collection of nonnegative numbers $(c_{iJ})_{i \in [d], J \subset [d] \setminus \{i\}, \# J \geq 1}$, and let us put
\[
	\nu_{\mathrm{hsp}}(dU)
	= \sum_{i = 1}^d \sum_{J \subset [d] \setminus \{i\} : \# J \geq 1} c_{iJ} \frac{s^{\eta_i - 1}}{1 - s} ds \delta_{U^{(i, J, s)}}(dU) \,,
\]
where
\[
	U^{(i, J, s)} = \sum_{k \in [d] \setminus (J \cup \{i\})} E_{kk} + s E_{ii} + \frac{1 - s}{\# J} \sum_{j \in J} E_{ij} \,.
\]
Informally, at rate $c_{iJ} (1 - s)^{-1} s^{\eta_i - 1}$ we choose a state $i \in [d]$, set $J \subset [d]$ and probability $s \in (0, 1)$.
Having chosen these, all of the Markov chains in state $i$ stay at the state with probability $s$ and move with probability $1 - s$.
If they move, then the probability that it moves to any state indexed by $J$ is equal.
Now we have that
\[
	\int_{\Delta_{d-1}^d} u_{ij} \nu_{\mathrm{hsp}}(dU)
	= \sum_{J \subset [d] \setminus \{i\} : \# J \geq 1} \frac{c_{iJ}}{\eta_i \# J} < \infty
\]
for any $i, j \in [d]$ such that $i \neq j$.
Thus we can define a dice process whose de Finetti measure is an extension of the harmonic process presented in \cite{kimSpectralGapKMP2025}.

\paragraph{Discrete multiagent instant exchange model}
Let us present a final model.
For this one we will consider another parameter $\kappa \in (0, \min_{i \in [d]} \eta_i)$.
Take $(c_J)_{J \subset [d], \# J \geq 2}$ as in the Multinomial splitting process, and set
\[
	\nu_{\mathrm{IEM}}(dU)
	= \sum_{J \subset [d] : \# J \geq 2} c_J \prod_{i \in J} \frac{\Gamma(\eta_i)}{\Gamma(\eta_i - \kappa) \Gamma(\kappa / \#J)^{(\# J) - 1}} s_{ii}^{\eta_i - 1} \prod_{j \in J \setminus \{i\}} s_{ij}^{(\kappa / \#J) - 1}  \lambda_{\Delta_J}(ds_i) \delta_{U^{(J, S)}}(dU) \,,
\]
where $S = (s_{ij})_{i, j \in J}$,
\[
	U^{(J, S)} = \sum_{k \in [d] \setminus \{J\}} E_{kk} + \sum_{i \in J} \sum_{j \in J} s_{ij} E_{ij} \,.
\]
This represents that at rate $c_J$ the set of indices $J$ is chosen, and when it is chosen, the probability that a Markov chain moves from state $i$ to $j$ is $s_{ij}$, where $s_i := (s_{ij})_{j \in J}$ is sampled according to a Dirichlet distribution.
It is readily seen that
\[
	\int_{\Delta_{d-1}^d} u_{ij} \nu(dU) = \sum_{J \subset [d] : J \supset \{i, j\}} c_J \frac{\kappa}{\eta_i \#J} < \infty \,,
\]
so a dice process with directing measure $\nu_{\mathrm{IEM}}(dU)$ exists.
We call it \emph{discrete multiagent instant exchange model} because its de Finetti measure is a version of the instant exchange model where more than two agents can be involved in the exchange, see section 2 in \cite{kimSpectralGapKMP2025} and references therein.

\section{Proofs}
\label{sec:diceProcess}

In this section we provide the remaining proofs.
We start with the proofs of Lemmas \ref{lem:consistencyArray} and \ref{lem:bkRatesExplicit} dealing with consistency of the rates of dice processes.
In Section \ref{subsec:proofCharacterizationDiceProcesses} we will prove Theorem \ref{thm:pExDP} characterising the partially exchangeable consistently Markovian processes as dices processes.
Finally in Section \ref{subsec:proofCoalescentCharacterization} we prove the characterisation of multitype-$\Lambda$-coalescents with multiple switching, Theorem \ref{thm:pmLambda}.
We defer the proofs of the de Finetti process and the moment duality result (which are somewhat technical, but following fairly standard methods) to Appendix \ref{Prop_2.2}.
\subsection{Proof: Characterisation of dice processes}

\paragraph{Proof of Lemma  \ref{lem:consistencyArray}}
Note that is suffices to establish that $R_n X^{(n + 1)}$ has the same law as $X^{(n)}$ if and only if \eqref{eq:consistencyEq} holds because  of the equality $R_m X^{(n)} = R_m R_{m+1} \cdots R_{n-1} X^{(n)}$ and an induction argument.
Equation \eqref{eq:consistencyEq} appears as a condition, in terms of transition rates, for a function---the restriction $R_n$--- of a finite state Markov Chain---$X^{(n + 1)}$---to be Markovian with the specified rates.
Let us carry out the argument explicitly.

Suppose that $R_n X^{(n + 1)}$ undergoes a $(b, K)$-change at time $t \in [0, \infty)$.
Then, assuming that $X_{n + 1}^{(n + 1)}(t-) = j$
, $X^{(n + 1)}$ can undergo any of the $(b + e_j, K + E_{jl})$-changes indexed by $l \in [d]$.
Thus the rate at which we observe a $(b, K)$-change in $R_n X^{(n + 1)}$ is given by the sum of the $d$ possible $(b + e_j, K + E_{jl})$-changes that $X^{(n + 1)}$ might observe, which is the right hand side of \eqref{eq:consistencyEq}.
Consequently we deduce that $X^{(n)}$ and $R_n X^{(n + 1)}$ have the same law if and only if \eqref{eq:consistencyEq} holds. \hfill $\Box$

\paragraph{Proof of Lemma \ref{lem:bkRatesExplicit}}
Assume first that \eqref{eq:transitionCharact} holds.
Then, by Lemma \ref{lem:consistencyArray} it suffices to prove that $\gamma_{b, K}$ defined by \eqref{eq:transitionCharact} satisfies \eqref{eq:consistencyEq}.
It is sufficient to consider the following two cases:
\begin{enumerate}
	\item $a_{i\ell} = 0$ for all $i, \ell$,
	\item $\nu \equiv 0$,
\end{enumerate}
as the general case follows by considering the sum.
In the first case, the right hand side of \eqref{eq:consistencyEq} becomes
\begin{align*}
	\sum_{l = 1}^d \int_{\Delta_{d-1}^d} \prod_{i = 1}^d \prod_{\ell = 1}^d u_{i \ell}^{k_{i\ell} + \delta_{ij} \delta_{l \ell}} \nu(dU)
	 & = \int_{\Delta_{d-1}^d} \prod_{i = 1}^d \prod_{\ell = 1}^d u_{i \ell}^{k_{i \ell}} \sum_{l = 1}^d u_{jl} \nu(dU) \\
	 & = \int_{\Delta_{d-1}^d} \prod_{i = 1}^d \prod_{\ell = 1}^d u_{i \ell}^{k_{i \ell}} \nu(dU) = \gamma_{b, K} \,.
\end{align*}
For the second case consider $b \in \mathcal{S}_{d-1}(n)$ and $K \in \mathcal{S}_{d-1,d}(b) \setminus \{\diag(b)\}$, and observe that $b + e_{j} \in \mathcal{S}_{d-1}(n+1)$ and $K + E_{jl} \in \mathcal{S}_{d-1,d}(b + e_j) \setminus \{ \diag(b + e_j) \}$ for each $j, l \in [d]$.
In this case, the right hand side of \eqref{eq:consistencyEq} is equal to
\begin{align*}
	\sum_{l = 1}^d \gamma_{b + e_j, K + E_{jl}}
	 & = \sum_{i = 1}^d \sum_{\ell \in [d] \setminus \{i\}} a_{i \ell} \Bigl( \sum_{l = 1}^d 1_{\{K = \diag(b) + E_{jj} - E_{jl} - E_{ii} + E_{i\ell}\}} \Bigr) \,.
\end{align*}
Note that when $j \neq i$, $\diag(b) + E_{jj} - E_{jl} - E_{ii} + E_{i\ell}$ has a negative $(j,l)$ entry if and only if $l \neq j$.
Similarly, when $j = i$, $\diag(b) + E_{jj} - E_{jl} - E_{ii} + E_{i \ell}$ has a negative entry $(i, l)$ entry whenever $l \neq i, \ell$.
Thus, we deduce the first equality in
\begin{align*}
	\sum_{l = 1}^d \gamma_{b + e_j, K + E_{jl}}
	 & = \sum_{i \in [d] \setminus \{j\}} \sum_{\ell \in [d] \setminus \{i\}} a_{i \ell} 1_{\{K = \diag(b) - E_{ii} + E_{i \ell}\}}           \\
	 & \quad + \sum_{\ell \in [d] \setminus \{j\}} a_{j \ell} \bigl( 1_{\{K = \diag(b)\}} + 1_{\{K = \diag(b) - E_{jj} + E_{j \ell}\}} \bigr) \\
	 & = \sum_{i = 1}^d \sum_{\ell \in [d] \setminus \{j\}} a_{i \ell} 1_{\{K = \diag(b) - E_{ii} + E_{i \ell}\}}
	= \gamma_{b, K} \,,
\end{align*}
whereas the second equality is due to the fact that $1_{\{K = \diag(b)\}} = 0$.
This proves that \eqref{eq:consistencyEq} holds, meaning that $\gamma$ is consistent.

In the other direction assume that $\gamma$ is consistent.
By Lemma \ref{lem:consistencyArray} we know that $\gamma$ must satisfy \eqref{eq:consistencyEq}.
Given $i\in [d]$ and $j \in [d] \setminus \{i\}$, for $b \in \mathbb{N}_0^d$ and $K \in \mathcal{S}_{d - 1}(b) \setminus \{\diag(b)\}$, we define $\tilde{\gamma}_{b, K}^{(ij)} := \gamma_{b + e_i, K + E_{ij}} / \gamma_{e_i, E_{ij}}$, which is a number within $[0, 1]$ in view of \eqref{eq:consistencyEq}.
Then $\tilde{\gamma}^{(ij)} := (\tilde{\gamma}_{b, K}^{(ij)} : b \in \mathbb{N}_0^d, K \in \mathcal{S}_{d-1,d}(b))$ satisfies \eqref{eq:consistencyEq} with $\tilde{\gamma}_{0, 0}^{(ij)} = 1$.
Let us construct an array of random variables $Z^{(ij)} = (Z_{l, \ell}^{(ij)})_{l \in [d], \ell \in \mathbb{N}}$ with values in $[d]$ on an auxiliary probability space $(\Omega', \mathbb{P}')$ using $\tilde{\gamma}^{(ij)}$.
Consider $b \in \mathbb{N}_0^d$ and $K \in \mathcal{S}_{d-1,d}(b)$ given.
For each $l \in [d]$ with $b_l > 0$ let $x^{(l)}$ be any element of $[d]^{b_l}$ such that $k_{l q} = \#\{ m \in [b_l] : x_m^{(l)} = q \}$ for every $q \in [d]$.
If $b_l = 0$ set $x^{(l)}$ as $\partial$, any arbitrary point as $[d]^{0} = \varnothing$.
Now define
\begin{equation} \label{eq:exchArray1}
	\mathbb{P}'\Bigl( \bigcap_{l \in [d] : b_l > 0} \bigcap_{\ell \in [b_l]} \{ Z_{l \ell}^{(ij)} = x_{\ell}^{(l)} \} \Bigr) := \tilde{\gamma}_{b, K}^{(ij)} \,,
\end{equation}
where we understand $\bigcap_{\varnothing} \cdot$ as $\Omega'$.
The explicit construction is given in Appendix \ref{app:PExArray}.
For any choice of finite permutations $\sigma_1, \ldots, \sigma_d : \mathbb{N} \to \mathbb{N}$ put $Z_{\bm{\sigma}}^{(ij)} := (Z_{l \sigma_l(\ell)}^{(ij)})_{l \in [d], \ell \in \mathbb{N}}$.
It is then not difficult to see that for any collection $\sigma_1, \ldots, \sigma_d$ of finite permutations  of $\mathbb{N}$, $Z^{(ij)}$ and $Z_{\bm{\sigma}}^{(ij)}$ have the same distribution.

Before proceeding, let us note that for each $l \in [d]$, $(Z_{l\ell}^{(ij)})_{\ell \in \mathbb{N}}$ is an exchangable sequence on its own right.
Thus, there exists a set of probability zero
outside of which
\[
	\xi_{\ell}^{(ijl)} = \lim_{n \to \infty} \frac{ \#\{m \in [n] : Z_{lm}^{(ij)} = \ell\} }{n}
\]
is well defined for every $l, \ell \in [d]$.
For each $l \in [d]$, the random vector $\xi^{(ijl)} = (\xi_1^{(ijl)}, \ldots, \xi_d^{(ijl)})$ obtained through this procedure can be seen as the directing measure of the sequence $(Z_{l\ell}^{(ij)})_{\ell \in \mathbb{N}}$.
De Finetti's theorem for partially exchangeable arrays, see for instance Corollary 1.7 in \cite{kallenbergProbabilisticSymmetriesInvariance2005} or Corollary 3.9 in \cite{aldousExchangeabilityRelatedTopics1985},
allows us then to deduce that
\[
	\mathbb{P}'\Bigl( \bigcap_{l \in [d]} \bigcap_{\ell \in [b_l]} \{ Z_{l \ell}^{(ij)} = x_{\ell}^{(l)} \} \Bigr)
	= \mathbb{E}'\Bigl[ \prod_{l \in [d]} \prod_{\ell \in [b_l]} \xi_{x_{\ell}^{(l)}}^{(ijl)} \Bigr] \,.
\]
Hence, there exists a probability measure $\tilde{\nu}_{ij}$ on $\Delta_{d-1}^d$, which is undestood as the joint distribution of the directing measures, such that
\begin{equation} \label{eq:exchArray2}
	\mathbb{P}'\Bigl( \bigcap_{l \in [d]} \bigcap_{\ell \in [b_l]} \{ Z_{l \ell}^{(ij)} = x_{\ell}^{(l)} \} \Bigr)
	= \int_{\Delta_{d-1}^d} \prod_{l = 1}^d \prod_{\ell \in [b_l]} u_{l x_{\ell}^{(l)}} \tilde{\nu}_{ij}(dU) \,,
\end{equation}
for any $b \in \mathbb{N}_0^d$ and any $x^{(l)} \in [d]^{b_l}$ with $l \in [d]$ (putting $x^{(l)} = \partial$ if $b_l = 0$).
In the previous expression we understand $\prod_{\varnothing} \cdot = 1$.

By \eqref{eq:exchArray1} and \eqref{eq:exchArray2} it follows that
\[
	\tilde{\gamma}_{b, K}^{(ij)}
	= \int_{\Delta_{d-1}^d} \prod_{l = 1}^d \prod_{\ell = 1}^d u_{l \ell}^{k_{l \ell}} \tilde{\nu}_{ij}(dU) \,.
\]
Before proceeding let us note that if $U = \Id$, the identity matrix in $\Delta_{d-1}^d$, then $\prod_{l = 1}^d \prod_{\ell = 1}^d u_{l \ell}^{k_{l \ell}} > 0$ if and only if $K = \diag(b)$, allowing us to further deduce that
\[
	\tilde{\gamma}_{b, K}^{(ij)}
	= \tilde{\nu}_{ij}(\{\Id\}) 1_{\{K = \diag(b)\}}
	+ \int_{\Delta_{d-1}^d} \prod_{l = 1}^d \prod_{\ell = 1}^d u_{l\ell}^{k_{l\ell}} \tilde{\nu}_{ij}^0(dU) \,,
\]
where $\tilde{\nu}_{ij}^0(A) := \tilde{\nu}_{ij}(A \setminus \{\Id\})$.

Thus, defining $\nu_{ij} = \gamma_{e_i, E_{ij}} \tilde{\nu}_{ij}$ and $\nu_{ij}^0 = \gamma_{e_i, E_{ij}} \tilde{\nu}_{ij}^0$, we deduce that for $b \in \mathbb{N}_0^d$ and $K \in \mathcal{S}_{d-1,d}(b)$,
\begin{equation} \label{eq:gammaRep1}
	\gamma_{b + e_i, K + E_{ij}}
	= \nu_{ij}(\{\Id\}) 1_{\{K = \diag(b)\}}
	+ \int_{\Delta_{d-1}^d} \prod_{l = 1}^d \prod_{\ell = 1}^d u_{l\ell}^{k_{l\ell}} \nu_{ij}^0(dU) \,.
\end{equation}
Consider now $i_1, i_2 \in [d]$, $j_1 \in [d] \setminus \{i_1\}$ and $j_2 \in [d] \setminus \{i_2\}$ such that $i_1 \neq i_2$ or $j_1 \neq j_2$.
For $b \in \mathbb{N}_0^d$ and
$K \in \mathcal{S}_{n-1}$, using \eqref{eq:gammaRep1} we deduce
\begin{align*} \MoveEqLeft
	\nu_{i_1j_1}(\{\Id\}) 1_{\{K = \diag(b)+E_{i_2i_2}-E_{i_2j_2}\}}
	+ \int_{\Delta_{d-1}^d} \prod_{l = 1}^d \prod_{\ell = 1}^d u_{l \ell}^{k_{l \ell}} u_{i_2 j_2} \nu_{i_1j_1}^0(dU) \\
	 & =\gamma_{b + e_{i_1} + e_{i_2}, K + E_{i_1j_1} + E_{i_2j_2}}                                                   \\
	 & =\nu_{i_2j_2}(\{\Id\}) 1_{\{K = \diag(b)+E_{i_1i_1}-E_{i_1j_1}\}}
	+ \int_{\Delta_{d-1}^d} \prod_{l = 1}^d \prod_{\ell = 1}^d u_{l \ell}^{k_{l \ell}} u_{i_1 j_1} \nu_{i_2 j_2}^0(dU) \,.
\end{align*}
Note that because $i_1 \neq j_1$, $1_{\{K = \diag(b) + E_{i_1 i_1} - E_{i_1 j_1}\}} = 1_{\{K + E_{i_1 j_1} = \diag(b) + E_{i_1 i_1}\}} = 0$.
Indeed, the latter equality is due to the fact that $\diag(b) + E_{i_1 i_1}$ is a diagonal matrix while $K + E_{i_1 j_1}$ is not.
Similarly we see that $1_{\{K = \diag(b) + E_{i_2 i_2} - E_{i_2 j_2}\}} = 0$.
Hence, we get
\begin{align*}
	\int_{\Delta_{d-1}^d} \prod_{l = 1}^d \prod_{\ell = 1}^d u_{l \ell}^{k_{l \ell}} u_{i_2 j_2} \nu_{i_1j_1}^0(dU)
	 & = \gamma_{b + e_{i_1} + e_{i_2}, K + E_{i_1j_1} + E_{i_2j_2}}
	= \int_{\Delta_{d-1}^d} \prod_{l = 1}^d \prod_{\ell = 1}^d u_{l \ell}^{k_{l \ell}} u_{i_1 j_1} \nu_{i_2 j_2}^0(dU) \,,
\end{align*}
allowing us to deduce $u_{i_2 j_2} \nu_{i_1 j_1}^0(dU) = u_{i_1 j_1} \nu_{i_2 j_2}^0(dU)$.
This has two consequences:
\begin{itemize}
	\item First, $\nu_{ij}^0$ does not charge the set $\{u_{ij} = 0\}$.
	      Indeed, by an easy application of the previous result we obtain the third equality in
	      \begin{align*}
		      \nu_{ij}^{0}(\{u_{ij} = 0\})
		       & = \int_{\{u_{ij} = 0\}} \sum_{l = 1}^d u_{1l} \nu_{ij}^0(dU)                                               \\
		       & = \int_{\{u_{ij} = 0\}} u_{11} \nu_{ij}^0(dU) + \sum_{l = 2}^d \int_{\{u_{ij} = 0\}} u_{ij} \nu_{1l}^0(dU) \\
		       & = \int_{\{u_{ij} = 0\}} u_{11} \nu_{ij}^0(dU) \,.
	      \end{align*}
	      Proceeding similarly, we can deduce that for any $\beta \in \mathbb{N}$,
	      \[
		      \nu_{ij}^0(\{u_{ij} = 0\}) = \int_{\{u_{ij} = 0\}} \bigl( \prod_{l = 1}^d u_{ll} \bigr)^\beta \nu_{ij}^0(dU) \,.
	      \]
	      By letting $\beta \to \infty$, using the fact that $\nu_{ij}^0(\{\Id\}) = 0$, the Dominated Convergence Theorem implies that
	      \[
		      \nu_{ij}^0(\{u_{ij} = 0\}) = \lim_{\beta \to \infty} \int_{\{u_{ij} = 0\}} \bigl( \prod_{l = 1}^d u_{ll} \bigr)^\beta \nu_{ij}^0(dU) = 0 \,.
	      \]
	\item Second, $u_{i_1 j_1}^{-1} \nu_{i_1 j_1}^0(dU) = u_{i_2 j_2}^{-1} \nu_{i_2 j_2}^0(dU)$ on $\{u_{i_1 j_1} > 0\} \cap \{u_{i_2 j_2} > 0\}$ for $i_1, i_2 \in [d]$, $j_1 \in [d] \setminus \{i_1\}$ and $j_2 \in [d] \setminus \{i_2\}$.
	      Surely, if $A \subset \{u_{i_1 j_1} > 0\} \cap \{u_{i_2 j_2} > 0\}$ is a measurable set, then
	      \[
		      \int_A u_{i_1 j_1}^{-1} \nu_{i_1 j_1}^0(dU)
		      = \int_A u_{i_1 j_1}^{-1} u_{i_2 j_2}^{-1} u_{i_2 j_2} \nu_{i_1 j_1}^0(dU)
		      = \int_A u_{i_1 j_1}^{-1} u_{i_2 j_2}^{-1} u_{i_1 j_1} \nu_{i_2 j_2}^0(dU)
		      = \int_A u_{i_2 j_2}^{-1} \nu_{i_2 j_2}^0(dU) \,.
	      \]
\end{itemize}
By the latter point and the fact that $\bigcup_{i = 1}^d \bigcup_{j = 1}^d \{u_{ij} > 0\} = \Delta_{d-1}^d$, we may define a measure $\nu$ on $\Delta_{d-1}^d$ such that $\nu(dU) = u_{ij}^{-1} \nu_{ij}^0(dU)$ on $\{u_{ij} > 0\}$.
Note then that if $b_i, k_{ij} > 0$ for some fixed $i \in [d]$ and $j \in [d] \setminus \{i\}$, an application of \eqref{eq:gammaRep1} gives us the second equality in
\begin{equation} \label{eq:transitionCharactAux}
	\begin{split}
		\gamma_{b, K}
		 & = \gamma_{(b - e_i) + e_i, (K - E_{ij}) + E_{ij}}
		= \nu_{ij}(\{\Id\}) 1_{\{K = \diag(b) - E_{ii} + E_{ij}\}} + \int_{\Delta_{d-1}^d} \prod_{l = 1}^d \prod_{\ell = 1}^d u_{l \ell}^{k_{l \ell}} u_{ij}^{-1} \nu_{ij}^{0}(dU)                                                \\
		 & = \sum_{l = 1}^d \sum_{\ell \in [d] \setminus \{l\}} \nu_{l \ell}(\{\Id\}) 1_{\{K = \diag(b) - E_{ll} + E_{l \ell}\}} + \int_{\Delta_{d - 1}^d} \prod_{l = 1}^d \prod_{\ell = 1}^d u_{l \ell}^{k_{l \ell}} \nu(dU) \,,
	\end{split}
\end{equation}
The last equality follows by definition of $\nu$, that $\nu_{ij}^0$ has no support on $\{u_{ij} = 0\}$, and the observation that the condition $k_{ij} > 0$ implies that if $l \neq i$ or $\ell \neq j$, then $(\diag(b) - E_{ll} + E_{l\ell})_{ij} = 0$, so $1_{\{K = \diag(b) - E_{ll} + E_{l \ell}\}} = 0$.
By defining $a_{ij} = \nu_{ij}(\{\Id\})$, expression \eqref{eq:transitionCharactAux} is precisely the same as \eqref{eq:transitionCharact}.
To finish the proof, note that for each $i \in [d]$ and $j \in [d] \setminus \{i\}$,
\[
	\int_{\Delta_{d-1}^d} u_{ij} \nu(dU)
	= \nu_{ij}^0(\Delta_{d-1}^d \setminus \{u_{ij} = 0\}) < \infty \,,
\]
so
\[
	\int_{\Delta_{d - 1}^d} \sum_{i = 1}^d (1 - u_{ii}) \nu(dU)
	= \sum_{i = 1}^d \sum_{j \in [d] \setminus \{i\}} \int_{\Delta_{d-1}^d} u_{ij} \nu(dU) < \infty \,,
\]
proving \eqref{eq:nuConstraint}. \hfill $\Box$

\subsection{Proof of Theorem \ref{thm:pExDP}}
\label{subsec:proofCharacterizationDiceProcesses}

Now that we have proven a characterization of dice processes in terms of processes that satisfy $(b, K)$ transitions of configurations, we are ready to prove Theorem \ref{thm:pExDP}, stating that the class of dice processes coincides with the class of partially exchangeable processes $Y^{(\infty)}$ that satisfy that $Y^{(n)} = R_n Y^{(\infty)}$ is a $[d]^n$-valued Markov chain for each $n \in \mathbb{N}$.
The fact that dice processes are, in general, only partially exchangeable is due to the initial condition, the value of the process at time zero, which is generally not exchangeable.
Therefore, it would seem that if we allow the initial condition to be exchangeable, then we may obtain full exchangeability of the processes.
This is what the second part of the statement says.

Let us begin the proof.
On one hand, Remark \ref{rmk:pExchOfDice1} states that a dice process is partially exchangeable.
On the other hand, suppose that $Y^{(\infty)}$ is partially exchangeable.
Let us first observe that $Y_m^{(\infty)} = \pi_m Y^{(\infty)}$ takes values in $D(\mathbb{R}_+, [d])$, the space of càdlàg functions with values in $[d]$.
Consider the initial configuration $\tilde{x} \in [d]^\infty$ given by $\tilde{x}_n = d$ if $n$ is divisible by $d$ and the remainder of $n$ when divided by $d$ otherwise; this is $\tilde{x} = (1, 2, \ldots, d, 1, 2, \ldots, d, \ldots)$.
Note that any other starting configuration $x \in [d]^\infty$ may be obtained by considering suitable projections of $\tilde{x}$.
Using said starting configuration, $\tilde{x}$, we will define an array $\tilde{Y} = (\tilde{Y}_{ij})_{i \in [d], j \in \mathbb{N}}$ by setting $\tilde{Y}_{ij} = Y_{(j-1) d + i}^{(\infty)}$.
It is then easy to see that if $\sigma_1, \ldots, \sigma_d : \mathbb{N} \to \mathbb{N}$ are finite permutations, then $(\tilde{Y}_{i \sigma_i(j)})_{i \in [d], j \in \mathbb{N}}$ has the same law as $\tilde{Y}$.
Indeed, for any $n \in \mathbb{N}$ let $i \in [d]$ and $j \in \mathbb{N}$ be the unique numbers such that $n = (j - 1) d + i$, and define $\sigma(n) := (\sigma_i(j) - 1) d + i$.
Then $\sigma$ is a finite permutation of $\mathbb{N}$ such that $\sigma \in \mathrm{Perm}_{\mathbf{A}^{\tilde{x}}}$, where $\mathbf{A}^{\tilde{x}}$ is the partition of $\mathbb{N}$ induced by $\tilde{x}$ (as in point (1.) of Remark \ref{rem:ex_dice_infinity}).
Moreover, the array obtained from $Y_{\sigma}^{(\infty)}$ is precisely $(\tilde{Y}_{i\sigma_i(j)})_{i \in [d], j \in \mathbb{N}}$, so the equality in law follows from the partial exchangeability of $Y^{(\infty)}$.
Therefore, we can apply Corollary 3.9 in \cite{aldousExchangeabilityRelatedTopics1985}  to deduce the existence of a probability measure $\alpha$ on $\mathcal{P}^d$, where $\mathcal{P}$ is the space of probability measures on $D(\mathbb{R}_+, [d])$, such that
\[
	\mathbb{P}_{\tilde{x}}\Bigl( \bigcap_{i = 1}^d \bigcap_{m_i = 1}^{n_i} \{\tilde{Y}_{i m_i} \in A_{i m_i}\} \Bigr) = \int_{\mathcal{P}^d} \prod_{i = 1}^d \prod_{m_i = 1}^{n_i} P_i(A_{i m_i}) \alpha(dP_1, \ldots, dP_d)
\]
is true for any $n \in \mathbb{N}_0^d$ and any measurable sets $A_{11}, \ldots, A_{1 n_1}, \ldots, A_{d1}, \ldots, A_{d n_d}$ of $D(\mathbb{R}_+, [d])$.
In terms of the sequence of processes $Y^{(\infty)}$ this is rewritten simply as
\[
	\mathbb{P}_{\tilde{x}}\Bigl( \bigcap_{m = 1}^n \{Y_m^{(\infty)} \in A_m\} \Bigr) = \int_{\mathcal{P}^d} \prod_{m = 1}^n P_{\tilde{x}_m}(A_m) \alpha(dP_1, \ldots, dP_d)
\]
for all $n \in \mathbb{N}$ and any measurable sets $A_1, \ldots, A_n$ of $D(\mathbb{R}_+, [d])$.
Moreover, by using suitable projections, it is readily seen that
\begin{equation} \label{eq:pExofY}
	\mathbb{P}_x\Bigl( \bigcap_{m = 1}^n \{ Y_m^{(\infty)} \in A_m \} \Bigr)
	= \int_{\mathcal{P}^d} \prod_{m = 1}^n P_{x_m}(A_m) \alpha(dP_1, \ldots, dP_d)
\end{equation}
holds for any $x \in [d]^{(\infty)}$, any $n \in \mathbb{N}$, and any measurable sets $A_1, \ldots, A_n$ of $D(\mathbb{R}_+, [d])$.
Taking any finite permutation $\sigma : \mathbb{N} \to \mathbb{N}$ and noting that
\[
	\int_{\mathcal{P}^d} \prod_{m = 1}^n P_{x_m}(A_m) \alpha(dP_1, \ldots, dP_d)
	= \int_{\mathcal{P}^d} \prod_{m = 1}^{\max\{\sigma(j) : j \in [n]\}} P_{x_{\sigma(m)}}(A_{\sigma(m)}) \alpha(dP_1, \ldots, dP_d)\,,
\]
where we put $A_j = D(\mathbb{R}_+, [d])$ for all $j > n$,
\eqref{eq:pExofY} entails the relation
\begin{equation} \label{eq:pExPermutationInitialValues}
	\mathbb{P}_x\Bigl( \bigcap_{m = 1}^n \{ Y_m^{(\infty)} \in A_m \} \Bigr)
	= \mathbb{P}_{x_{\sigma}}\Bigl( \bigcap_{m = 1}^n \{ Y_m^{(\infty)} \in A_{\sigma(m)} \} \Bigr) \,,
\end{equation}
for all $n \geq N$, where $N := \max\{j \in \mathbb{N} : \sigma(j) \neq j, \sigma(j + 1) = j + 1\}$.

Now consider $x, y \in [d]^\infty$, and define, for each $i \in [d]$, the set of indices $A_i^{x} := \{j \in \mathbb{N} : x_j = i\}$.
Let $\sigma : \mathbb{N} \to \mathbb{N}$ be any finite permutation such that $\sigma(A_i^{x}) = A_i^{x}$ for every $i\in[d]$, and consider $N$ as in the previous paragraph.
Consider the time of the first jump of $Y^{(N)} = R_N Y^{(\infty)}$ defined by $\tau_1(N) := \inf\{t \geq 0 : Y^{(N)}(t) \neq Y^{(N)}(0)\}$.
Note that this is the same as the first jump of $Y_\sigma^{(N)} = R_N Y_\sigma^{(\infty)}$.
By partial exchangeability we obtain, for any $y \in [d]^N$, the first equality in
\begin{equation} \label{eq:jumpPartial}
	\begin{split}
		\mathbb{P}\bigl( Y^{(N)}(\tau_1(N)) = R_N y, Y^{(N)}(0) = R_N x \bigr)
		 & = \mathbb{P}\bigl( Y_\sigma^{(N)}(\tau_1(N)) = R_N y, Y_\sigma^{(N)}(0) = R_N x \bigr)               \\
		 & = \mathbb{P}\bigl( Y^{(N)}(\tau_1(N)) = R_N y_{\sigma^{-1}}, Y^{(N)}(0) = R_N x_{\sigma^{-1}} \bigr) \\
		 & = \mathbb{P}\bigl( Y^{(N)}(\tau_1(N)) = R_N y_{\sigma^{-1}}, Y^{(N)}(0) = R_N x \bigr) \,,
	\end{split}
\end{equation}
whereas in the second equality $\sigma^{-1}$ is the inverse of $\sigma$, and the last equality follows because we assumed that $\sigma(A_i^{x}) = A_i^{x}$ for every $i \in [d]$.
By being $y$ arbitrary, from \eqref{eq:pExPermutationInitialValues} and \eqref{eq:jumpPartial}, it is clear that the rates of $Y^{(N)}$ satisfy \eqref{eq:pexRates}.
As $N$ was arbitrary, it follows that $Y^{(\infty)}$ must be a dice process, proving the first part of the statement.

Let us prove the last part, namely that exchangeability of a dice process $X^{(\infty)}$ is equivalent to the exchangeability of the initial value.
Clearly the exchangeability of $X^{(\infty)}$ implies the exchangeability of $X^{(\infty)}(0)$.
Let us assume then that $X^{(\infty)}(0)$ is exchangeable.
Consider $n, m \in \mathbb{N}$, $0 = t_0 < t_1 < \cdots < t_m$, and $y^{(0)}, y^{(1)}, \ldots, y^{(m)} \in [d]^n$ given.
Let $\mathbb{P}_{n, x}$ be the law of the $n$-dice process $X^{(n)} := R_n X^{(\infty)}$ started from $x$.
Then, by the Markov property of $X^{(n)}$,
\begin{equation} \label{eq:aux11}
	\mathbb{P}\Bigl( \bigcap_{j = 0}^m \{X^{(n)}(t_j) = y^{(j)}\} \Bigr)
	= \mathbb{P}(X^{(n)}(0) = y^{(0)}) \prod_{j = 1}^m \mathbb{P}_{n, y^{(j-1)}}(X^{(n)}(t_j - t_{j-1}) = y^{(j)}) \,.
\end{equation}
Now, for any permutation $\sigma : [n] \to [n]$, by Remark \ref{rmk:almostExchOfDice1} we have that
\begin{equation} \label{eq:aux22}
	\mathbb{P}_{n, y^{(j-1)}}(X^{(n)}(t_j - t_{j-1}) = y^{(j)})
	= \mathbb{P}_{n, y_{\sigma^{-1}}^{(j-1)}}(X^{(n)}(t_j - t_{j-1}) = y_{\sigma^{-1}}^{(j)})\,,
\end{equation}
while
\begin{equation} \label{eq:aux33}
	\mathbb{P}(X^{(n)}(0) = y^{(0)}) = \mathbb{P}(X^{(n)}(0) = y_{\sigma^{-1}})
\end{equation}
follows by the exchangeability of $X^{(\infty)}(0)$.
Therefore, from \eqref{eq:aux11}, \eqref{eq:aux22}, and \eqref{eq:aux33} we get the first equality in
\begin{align*}
	\mathbb{P}\Bigl( \bigcap_{j = 0}^m \{X^{(n)}(t_j) = y^{(j)}\} \Bigr)
	 & = \mathbb{P}\Bigl( \bigcap_{j = 0}^m \{X^{(n)}(t_j) = y_{\sigma^{-1}}^{(j)}\} \Bigr)
	= \mathbb{P}\Bigl( \bigcap_{j = 0}^m \{X_{\sigma}^{(n)}(t_j) = y^{(j)}\} \Bigr) \,,
\end{align*}
while the latter equality follows by noting that $x_\sigma = y$ if and only if $x = y_{\sigma^{-1}}$, for $x, y \in [d]^n$ and a permutation $\sigma : [n] \to [n]$.
Hence we deduce that $X^{(n)}$ and $X_\sigma^{(n)}$ have the same law for every permutation $\sigma : [n] \to [n]$.
Due to the fact that $n \in \mathbb{N}$ was chosen arbitrary and the consistency condition of $X^{(\infty)}$,
this implies that $X^{(\infty)}$ and $X_{\sigma}^{(\infty)}$ have the same law for any finite permutation $\sigma : \mathbb{N} \to \mathbb{N}$.
This finishes the proof.  \hfill $\Box$

\subsection{Proof of Theorem \ref{thm:pmLambda}}
\label{subsec:proofCoalescentCharacterization}

We now turn to the problem of characterizing the multitype coalescent in the Pitman sense.
Before Theorem \ref{thm:pmLambda} a class of M-$\Lambda$-MS-coalescents was constructed by superposing an independent M-$\Lambda$-coalescent and a dice process.
We now prove that such a decomposition always holds for any M-$\Lambda$-MS-coalescent.
By the discussion preceding Definition \ref{def:PMLcoal} it is clear that $\mathbf{\Pi}$ has independent coalescence and type transition mechanisms.

Let us now suppose that $\bm{\Pi}$ is a M-$\Lambda$-MS-coalescent, and let us consider any arbitrary, but fixed, starting condition $\bm{\pi} \in \mathbf{P}_d$ such that $\pi_i = \{i\}$ for every $i \in \mathbb{N}$, so we will work under the probability measure $\mathbb{P}^{\bm{\pi}}$.
We will write $\bm{\pi}(j)$ to stress that the first block of $\bm{\pi}$ is of type $j$ and $\bm{\pi}(jj)$ to stress that the first two blocks of $\bm{\pi}$ are of type $j$.
Recall that $X_n(t)$ is the type of the $n$-th block of $\bm{\Pi}$ at time $t \geq 0$, hence $X_n(0)=\mathfrak{t}_n$ in the notation of Section 2.5.
Define $T_1 := \inf\{t > 0 : X_1(t) \neq \mathfrak{t}_1 \}$ the first time that the type of block $1$ changes and $T_{\{1, 2\}} = \inf\{t > 0 : 2 \in \Pi_1(t)\}$ the first time that $1$ and $2$ are in the same block.
In what follows, we will use these random stopping times in a similar fashion that Schweinsberg uses $T_{\{1,2\}}$ in the proof of the ``only if'' part of Theorem 2 in \cite{schweinsbergCoalescentsSimultaneousMultiple2000}.
Exploiting exchangeability, we aim to show that the only possible transitions in the process are either of dice-type or of multitype-$\Lambda-$type (satisfying the respective consistency conditions), with no simultaneous occurrence of both.

Let us begin by considering only $T_1$.
Note that if $T_1 = \infty$ a.s. with respect to $\mathbb{P}^{\bm{\pi}(j)}$,
this would imply that the rates for a type transition or coalescence of blocks of type $j$ to other types are 0.  Thus, we may assume $T_1 < \infty$ a.s.
In this case define, for every $n$, $E_n$ as the event where the blocks containing $1, \ldots, n$ do not merge nor change type up to time $T_1-$; namely, that the first jump observed by the partition restricted to $[n]$ involves a type transition of the block containing $1$.
By the Markov consistency of $\mathbf{\Pi}$, $\mathbf{\Pi}\vert_{[n]}$ is a Markov chain with a finite state space and $E_n$ only depends on $\mathbf{\Pi}\vert_{[n]}$.
Thus, $\mathbb{P}^{\bm{\pi}(j)}(E_n) > 0$.
By the previous argument, we can define a random $d$-type partition $\mathbf{\Theta}^{(n)}$ of $[n]$ that has the same law as $\mathbf{\Pi}\vert_{[n]}(T_1)$ given $E_n$.
Moreover, the same consistency ensures that we can define a random $d$-type partition $\mathbf{\Theta}^{(\infty)}$ on $\mathbb{N}$ such that $\mathbf{\Theta}^{(\infty)}\vert_{[n]}$ has the same law as $\mathbf{\Theta}^{(n)}$.

Recall that the $n$-th block of a typed partition $\mathbf{\Gamma}$ is denoted by $\Gamma_n$, so the $n$-th block of $\mathbf{\Theta}^{(\infty)}$ is denoted by $\Theta^{(\infty)}_n$
In particular, the first block is denoted by $\Theta_1^{(\infty)}$ and it is such that $1 \in \Theta_1^{(\infty)}$.
For $k \neq j$ define $\mathsf{S}_k$ as the event
where the first block of $\mathbf{\Theta}^{(\infty)}$ is of type $k$.
In addition, define
\[
	\mathsf{C} = \bigcup_{n \geq 2} \{n \in \Theta_1^{(\infty)}\}
\]
the event where at least one block coalesces with $\{1\}$ (so $\#\Theta_1^{(\infty)} \geq 2$)
, and
\[
	\mathsf{D} = \bigcap_{n \geq 1} \{n \in \Theta_n^{(\infty)}\}
\]
the event where no coalescence occurs.
Additionally consider $\mathsf{C}_k := \mathsf{C} \cap \mathsf{S}_k$ and $\mathsf{D}_k := \mathsf{D}  \cap \mathsf{S}_k$.
Note that by construction $\mathsf{C}_1, \ldots, \mathsf{C}_d, \mathsf{D}_1, \ldots, \mathsf{D}_d$ are pairwise disjoint
and by definition of the M-$\Lambda$-MS-coalescent $\sum_{k \in [d] \setminus \{j\}} \bigl( \mathbb{P}^{\bm{\pi}(j)}( \mathsf{C}_k ) + \mathbb{P}^{\bm{\pi}(j)}( \mathsf{D}_k ) \bigr) = 1$.

Now let $k \in [d] \setminus \{j\}$ be given such that $\mathbb{P}^{\bm{\pi}(j)}(\mathsf{C}_k \cup \mathsf{D}_k) > 0$.
Consider $n \in \mathbb{N}$ given and fixed.
Let $\bm{\theta}_n$ be a $d$-type partition of $[n]$ such that $\mathbb{P}^{\bm{\pi}(j)}( \mathbf{\Theta}^{(\infty)}\vert_{[n]} = \bm{\theta}_n \mid \mathsf{C}_k ) > 0$ or such that $\mathbb{P}^{\bm{\pi}(j)}( \mathbf{\Theta}^{(\infty)}\vert_{[n]} = \bm{\theta}_n \mid \mathsf{D}_k ) > 0$.
Then we have
\begin{align}
	\label{eq:coalNK}
	\mathbb{P}^{\bm{\pi}(j)}( \mathbf{\Theta}^{(\infty)} \vert_{[n]} = \bm{\theta}_n \mid \mathsf{C}_k )
	 & = \sum_{\bm{\theta}_{n + 1} : R_n \bm{\theta}_{n + 1} = \bm{\theta}_n} \mathbb{P}^{\bm{\pi}(j)}( \mathbf{\Theta}^{(\infty)} \vert_{[n + 1]} = \bm{\theta}_{n + 1} \mid  \mathsf{C}_k )                       \\
	\shortintertext{and}
	\mathbb{P}^{\bm{\pi}(j)}( \mathbf{\Theta}^{(\infty)} \vert_{[n]} = \bm{\theta}_n \mid \mathsf{D}_k )
	 & = \sum_{\bm{\theta}_{n + 1} : R_n \bm{\theta}_{n + 1} = \bm{\theta}_n} \mathbb{P}^{\bm{\pi}(j)}( \mathbf{\Theta}^{(\infty)} \vert_{[n + 1]} = \bm{\theta}_{n + 1} \mid  \mathsf{D}_k ) \,, \label{eq:diceNK}
\end{align}
where the above sums are over the set of all $d$-type partition of $[n + 1]$, $\bm{\theta}_{n + 1}$, such that its restriction to $[n]$ coincides with $\bm{\theta}_n$.

In \eqref{eq:coalNK} we are conditioning on the event that there is a coalescence such that the first block of $\mathbf{\Theta}^{(\infty)}$ is of type $k$.
By definition of the M-$\Lambda$-MS-coalescent, the only coalescence or type transition possible is the one involving the block containing $1$.
This means that the probabilities in the sum on the right-hand side are zero if $\bm{\theta}_{n + 1}$ satisfies either of the following two conditions:
\begin{itemize}
	\item $\{n + 1\}$ as a singleton is a block of $\bm{\theta}_{n + 1}$ and its type is different from $\mathfrak{t}_{n + 1}$ (there was a type transition not involved in the coalescence);
	\item The block of $\bm{\theta}_{n + 1}$ containing $n + 1$ is not a singleton and does not contain $1$ (there was a second coalescence event).
\end{itemize}
Hence, there are only two possible values of $\bm{\theta}_{n + 1}$ for which the probabilities inside the sum on the right-hand side of \eqref{eq:coalNK} are nonzero.
Namely, either $n + 1$ merges to the block containing $1$, represented as $\bm{\theta}_{n, c}$ or $\{n + 1\}$ stays as a singleton with the same type as before, represented as $\bm{\theta}_{n, s}$.
Thus, \eqref{eq:coalNK} becomes
\begin{equation} \label{eq:lambda1}
	\mathbb{P}^{\bm{\pi}(j)}( \mathbf{\Theta}^{(\infty)}\vert_{[n]} = \bm{\theta}_n \mid \mathsf{C}_k )
	= \mathbb{P}^{\bm{\pi}(j)}( \mathbf{\Theta}^{(\infty)}\vert_{[n + 1]} = \bm{\theta}_{n, c} \mid \mathsf{C}_k ) + \mathbb{P}^{\bm{\pi}(j)}( \mathbf{\Theta}^{(\infty)}\vert_{[n + 1]} = \bm{\theta}_{n,s} \mid \mathsf{C}_k ) \,.
\end{equation}
Doing a similar analysis, the possible values of $\bm{\theta}_{n + 1}$ such that the probability in the right hand side of \eqref{eq:diceNK} is not zero correspond to $\{n + 1\}$ staying as a singleton, where its type may be any $l \in [d]$.
We represent by $\bm{\theta}_{n, l}$ the partition where the block $\{n + 1\}$ is of type $l$, with which \eqref{eq:diceNK} becomes
\begin{equation}
	\label{eq:diceProc1}
	\mathbb{P}^{\bm{\pi}(j)}( \mathbf{\Theta}^{(\infty)}\vert_{[n]} = \bm{\theta}_n \mid \mathsf{D}_k )
	= \sum_{l = 1}^d \mathbb{P}^{\bm{\pi}(j)}( \mathbf{\Theta}^{(\infty)}\vert_{[n + 1]} = \bm{\theta}_{n, l} \mid \mathsf{D}_k ) \,.
\end{equation}

We will now focus our attention on $T_{\{1, 2\}}$ to analyse the coalescence of blocks $\{1\}$ and $\{2\}$ when they start being of the same type.
Thus, for this part we will suppose that $\mathfrak{t}_2 = j$.
As before we consider the case where $T_{\{1, 2\}} < \infty$ a.s. under $\mathbb{P}^{\bm{\pi}(jj)}$.
Additionally, because the coalescence from one type to another is already covered in the previous case, we will focus on the set $\{T_{\{1, 2\}} < T_1\}$.
Similarly to before, for $n \in \mathbb{N}$ with $n \geq 2$, let $F_n$ be the event where the first jump observed by $\mathbf{\Pi}\vert_{[n]}$ is at time $T_{\{1, 2\}}$, so it involves a coalescence of the first two blocks (which are of the same type) in which the the type of block $1$ does not change.
In this case we have that $\mathbb{P}^{\bm{\pi}(jj)}(F_n) > 0$ by the Markovian consistency.
As before, construct a random $d$-type partition $\mathbf{\Xi}^{(\infty)}$ such that $\mathbf{\Xi}^{(\infty)}\vert_{[n]}$ has the same distribution as $\mathbf{\Pi}\vert_{[n]}(T_{\{1, 2\}})$ given $F_n$.
In this case, it is clear that $\mathbb{P}^{\bm{\pi}(jj)}(\mathsf{C}_j) = 1$ and furthermore, we obtain the analogue of \eqref{eq:lambda1},
\begin{equation}
	\label{eq:lambda2}
	\mathbb{P}^{\bm{\pi}(jj)}(\mathbf{\Xi}^{(\infty)} \vert_{[n]} = \bm{\theta}_{n})
	= \mathbb{P}^{\bm{\pi}(jj)}(\mathbf{\Xi}^{(\infty)} \vert_{[n + 1]} = \bm{\theta}_{n, c}) +
	\mathbb{P}^{\bm{\pi}(jj)}(\mathbf{\Xi}^{(\infty)} \vert_{[n]} = \bm{\theta}_{n, s}) \,,
\end{equation}
for $n \geq 2$.

With these preparations, we can now express the rates.
Under $\mathbb{P}^{\bm{\pi}(j)}$, $T_1$ is exponential, say with rate $\eta_1^{(j)}$, while under $\mathbb{P}^{\bm{\pi}(jj)}$, $T_1 \wedge T_{\{1, 2\}}$ is exponential, say with rate $\eta_{\{1,2\}}^{(j)}$.
Now, for a $d$-type partition $\bm{\theta}_n$ of $[n]$ write
\begin{align}
	\lambda_{n}(\bm{\pi}, \bm{\theta}_n)
	 & = \sum_{k \in [d] \setminus \{\mathfrak{t}_1\}} \eta_1^{(\mathfrak{t}_1)}
	\mathbb{P}^{\bm{\pi}(\mathfrak{t}_1)}(\{\mathbf{\Theta}^{(\infty)}\vert_{[n]} = \bm{\theta}_n\} \cap \mathsf{C}_k) \label{eq:lrateCoal}                                                                                                                                     \\
	 & \qquad + 1_{\{\mathfrak{t}_1 = \mathfrak{t}_2\}} \eta_{\{1,2\}}^{(\mathfrak{t}_1)} \mathbb{P}^{\bm{\pi}(\mathfrak{t}_1 \mathfrak{t}_1)}(T_{\{1, 2\}} < T_1) \mathbb{P}^{\bm{\pi}(\mathfrak{t}_1 \mathfrak{t}_1)}(\mathbf{\Xi}^{(\infty)}\vert_{[n]} = \bm{\theta}_n) \,, \\
	\shortintertext{and
	}
	\gamma_{n}(\bm{\pi}, \bm{\theta}_n)
	 & = \sum_{k \in [d] \setminus \{\mathfrak{t}_1\}} \eta_1^{(\mathfrak{t}_1)} \mathbb{P}^{\bm{\pi}(\mathfrak{t}_1)}(\{\mathbf{\Theta}^{(\infty)}\vert_{[n]} = \bm{\theta}_n\} \cap \mathsf{D}_k) \,. \label{eq:lrateDice}
\end{align}
Using \eqref{eq:lambda1}, \eqref{eq:lambda2}, and \eqref{eq:lrateCoal} we obtain the consistency relation
\begin{equation} \label{eq:lambdaRatesCons}
	\lambda_{n}(\bm{\pi}, \bm{\theta}_n)
	= \lambda_{n+1}(\bm{\pi}, \bm{\theta}_{n, c}) + \lambda_{n+1}(\bm{\pi}, \bm{\theta}_{n, s}) \,.
\end{equation}
On the other hand \eqref{eq:diceProc1} and \eqref{eq:lrateDice} allow us to obtain
\begin{equation} \label{eq:diceRatesCons}
	\gamma_{n}(\bm{\pi}, \bm{\theta}_n)
	= \sum_{l = 1}^d \gamma_{n + 1}(\bm{\pi}, \bm{\theta}_{n, l}) \,.
\end{equation}

Now fix $n \in \mathbb{N}$.
On one hand, $\lambda_{n}(\bm{\pi}, \bm{\theta}_n)$ is the rate at which the process $\mathbf{\Pi}\vert_{[n]}$ goes from $\bm{\pi}\vert_{[n]}$ to $\bm{\theta}_n$ in a coalescence event involving block 1 (and block 2 when the type of block 1 does not change).
On the other hand, $\gamma_{n}(\bm{\pi}, \bm{\theta}_n)$ is the rate at which at which the process $\mathbf{\Pi}\vert_{[n]}$ goes from $\bm{\pi}\vert_{[n]}$ to $\bm{\theta}_n$ in a type transition event  involving block 1.
To finish the proof,
we need to extend this to deduce the rates at which the process $\mathbf{\Pi}\vert_{[n]}$ jumps from a typed partition $\bm{\pi}\vert_{[n]}$ to another typed partition $\bm{\theta}_n$ without necessarily involving block 1.
In order to do this we will use the partial exchangeability of the M-$\Lambda$-MS-coalescent.
Suppose for a moment that $x \in [d]^\infty$ describes the initial types of the process $\mathbf{\Pi}$, so that $x_m$ is the type of the $m$-th block $\Pi_m(0) = \{m\}$ for $m \in \mathbb{N}$.
For any fixed finite permutation $\sigma : \mathbb{N} \to \mathbb{N}$ define the process $\sigma \mathbf{\Pi} := (\sigma \mathbf{\Pi}(t))_{t \geq 0}$, where for each $t \geq 0$ we put
\[
	\sigma \mathbf{\Pi}(t) := \{(\sigma \Pi_i(t), Y_i(t)) ; i \in \mathbb{N}\} \,,
\]
with $\sigma \Pi_i(t) := \{\sigma(j) : j \in \Pi_i(t)\}$ and $Y_i(t)$ being the type of block $\sigma \Pi_i(t)$ (at time $t$).
By considering a similar argument, invoking the partial exchangeability, to that given in the first part of the proof of Theorem \ref{thm:pExDP} we observe that $\sigma \mathbf{\Pi}$ has the same distribution as $\mathbf{\Pi}$ started from the typed partition $\{(\{m\}, x_{\sigma^{-1}(m)}) : m \in \mathbb{N}\}$.
By definition $\sigma^{-1}$ is still a finite permutation, so in particular we deduce that, for any permutation $\sigma : [n] \to [n]$,
\begin{equation} \label{eq:aux_pmlcProof}
	\lambda_{n}(\bm{\pi}, \bm{\theta}_n) = \lambda_{n}(\sigma\bm{\pi}, \sigma\bm{\theta}_n) \quad\text{and}\quad
	\gamma_{n}(\bm{\pi}, \bm{\theta}_n) = \gamma_{n}(\sigma\bm{\pi}, \sigma\bm{\theta}_n)\,,
\end{equation}
where we put
\[
	\sigma \bm{\pi} = \{(\sigma(\pi_i), \mathfrak{t}_i) : i \in \mathbb{N}\} \quad\text{with}\quad \sigma(\pi_i) = \{\sigma(j) : j \in \pi_i\} \,,
\]
and $\sigma\bm{\theta}_n$ is defined similarly.
This gives us the same rates $\lambda_n,\gamma_n$ as before without requiring that block 1 is involved in a transition.

Expression \eqref{eq:aux_pmlcProof} tells us that, for any given $n \in \mathbb{N}$, the rate at which $\mathbf{\Pi}\vert_{[n]}$ goes from $\bm{\pi}\vert_{[n]}$ to $\bm{\theta}_n$ only depends on:
\begin{enumerate}
	\item The number of blocks of each type $i \in [d]$ that coalesce into a block of type $k \in [d]$ in a coalescent event.
	\item The number of blocks of each type $i$ that move to a block of type $k \in [d]$ for each $i, k \in [d]$ in a type transition event.
\end{enumerate}
Therefore, in the coalescent events, Theorems 1.5 and 1.7 in \cite{johnstonMultitypeLcoalescents2023} tell us that \eqref{eq:lambdaRatesCons} implies that $\mathbf{\Pi}$ has a coalescent mechanism given by an M-$\Lambda$-coalescent.
Indeed, the first equality in \eqref{eq:aux_pmlcProof} shows that, for each $j \in [d]$, the rates of coalescent events producing a new block of type $j$ depend only on the current number of blocks $(b_1, \ldots, b_d)$ of each type and on the number of blocks $(k_1, \ldots, k_d)$ of each type participating in the merger.
That is, following the notation in \cite{johnstonMultitypeLcoalescents2023}, for each $j \in [d]$, there exists an array of nonnegative numbers $(\lambda_{\mathbf{b}, \mathbf{k} \to j} : \mathbf{b} \in \mathbb{N}_0^d \setminus \{0, e_j\}, \mathbf{k} \in [\mathbf{b}]_0 \setminus \{0\})$, where, for $\mathbf{b} = (b_1, \ldots, b_d)$ and $\mathbf{k} = (k_1, \ldots, k_d)$, $\lambda_{\mathbf{b}, \mathbf{k} \to j}$ is the rate corresponding to the event in which $k_1$ blocks of type $1$, $k_2$ blocks of type $2$, etc., merge into a single block of type $j$, given there are $b_1$ blocks of type $1$, $b_2$ blocks of type $2$, etc.
Moreover, by \eqref{eq:lambdaRatesCons}, this array satisfies the consistency relation
\[
	\lambda_{\mathbf{b}, \mathbf{k} \to j} = \lambda_{\mathbf{b} + e_l, \mathbf{k} + e_l \to j} + \lambda_{\mathbf{b} + e_l, \mathbf{k} \to j}
\]
for each $l \in [d]$, which is precisely equation (7) in \cite{johnstonMultitypeLcoalescents2023}.
By Theorem 1.7 of \cite{johnstonMultitypeLcoalescents2023}, there exist nonnegative constants $\rho_{jj \to j}$, $\rho_{l \to j}$ for $l \neq j$, and a measure $\mathcal{Q}_{\to j}$ on $[0, 1]^d$ such that $\int_{[0, 1]^d} \sum_{l = 1}^d s_l^{1 + \delta_{jl}} \mathcal{Q}_{\to j} < \infty$, and
\[
	\lambda_{\mathbf{b}, \mathbf{k} \to j} = \sum_{l \in [d] \setminus \{j\}} \rho_{l \to j} 1_{\{\mathbf{k} = e_l\}} + \rho_{jj \to j} 1_{\{\mathbf{k} = 2 e_j\}}
	+ \int_{[0, 1]^d} \prod_{l = 1}^d s_l^{k_l} (1 - s_l) ^{b_l - k_l} \mathcal{Q}_{\to j}(ds) \,.
\]
This is precisely the definition of the rates of the M-$\Lambda$-coalescent given in \cite{johnstonMultitypeLcoalescents2023} (cf. \eqref{eq:jkrRates}), meaning that the mechanism controlling the coalescent part of $\mathbf{\Pi}$ is precisely a M-$\Lambda$-coalescent.

Meanwhile, for the type transition events,
the second equation in \eqref{eq:aux_pmlcProof} show that the rates of type switching are governed by a mechanism that involves only $(b, K)$-changes in the sense described in Section \ref{subsect:particle_interpretation}.
That is, there is an array $(\gamma_{b, K}, b \in \mathbb{N}_0^d \setminus \{0\}, K \in \mathcal{S}_{d-1,d}(b) \setminus \{\diag(b)\})$ where $\gamma_{b, K}$ denotes the rate of a type switching event starting from a configuration with $b_1$ blocks of type $1$, $b_2$ blocks of type $2$, etc., in which $k_{lj}$ blocks of type $l$ switch to type $j$ for each $l, j \in [d]$.
By \eqref{eq:diceRatesCons}, these rates satisfy the consistency equation \eqref{eq:consistencyEq}.
Hence, by Lemma \ref{lem:bkRatesExplicit} together with Proposition \ref{th:cerwCharact}, they coincide with the rates of a dice process, and therefore the type switching mechanism is itself described by a dice process.

Finally, since by construction every coalescent event necessarily involves the merger of at least two blocks (cf. the events $\mathsf{C}_1, \ldots, \mathsf{C}_d$), it follows that $\rho_{l \to j} = 0$ for all $l \neq j$.
Indeed, such terms would otherwise correspond to type switching events where a single block changes its type, and these events are instead governed by the type switching mechanism.
Moreover, the coalescent and type switching mechanisms are independent, as a coalescence event and a type switching event cannot occur simultaneously.
These observations are crucial for ensuring the uniqueness of the decomposition of an M-$\Lambda$-MS-coalescent into two independent components: an M-$\Lambda$-coalescent governing mergers, and a Dice process governing type switching.
Without this restriction, infinitely many decompositions would be possible.
Thus, the characterization is complete.
\section*{Acknowledgements}
J. L. P\'erez gratefully acknowledges the support of the Fulbright Program during his sabbatical stay at Arizona State University. He also wishes to thank the faculty and staff of the School of Mathematical and Statistical Sciences at ASU for their hospitality.
I. Nu\~nez acknowledges the support of the UNAM-PAPIIT grant IN101722.
\appendix

\section{Proof of the de Finetti process and duality}
\label{Prop_2.2}

In this section, we provide the proof of  Proposition \ref{prop:convergence_deFinetti}. We divide the proof in two steps: first we prove that there exists a unique solution to \eqref{eq:deFinettiSDE} and then we prove the convergence of the sequence of processes $(R^{(n)})_{n \geq 1}$ in the space $D([0, T], \Delta_{d-1})$.

\paragraph{Existence and uniqueness of a solution to the SDE}
(i) We first show the uniqueness of solutions to the SDE given in \eqref{eq:deFinettiSDE}. To this end, let us consider two solutions to \eqref{eq:deFinettiSDE}, denoted by $(R(t))_{t\geq0}$ and $(\overline{R}(t))_{t\geq0}$.
For $t \geq 0$ put $\zeta(t) := R(t) - \overline{R}(t)$.
Then, for each $i \in [d]$,
\begin{align}\label{eq:deFinettiSDEIneq}
	\abs{ \zeta_i(t) }
	 & \leq \abs{\zeta_i(0)} + \sum_{j \in [d] \setminus \{i\}} \int_0^t \bigl[ a_{ji} \abs{\zeta_j(s)} + a_{ij} \abs{\zeta_i(s)} \bigr] ds                \\
	 & \quad + \int_0^t \int_{\Delta_{d-1}^d} \sum_{j \in [d] \setminus \{i\}} \bigl[ u_{ji} \abs{\zeta_j(s-)} + u_{ij} \abs{\zeta_i(s-)} \bigr] N(ds, dU)
\end{align}
Summing over $i \in [d]$ and taking expectations yields
\begin{equation} \label{aux_sol_1}
	\sum_{i \in [d]} \mathbb{E}\Bigl[ \sup_{0 \leq s \leq t} \abs{\zeta_i(s)} \Bigr]
	\leq \abs{\zeta(0)} + 2 (d \tilde{a} + \tilde{I}^{\nu}) \int_0^t \sum_{i \in [d]} \mathbb{E}\Bigl[ \sup_{0 \leq s' \leq s} \abs{\zeta_i(s')} \Bigr] ds
\end{equation}
where
\begin{equation} \label{eq:auxiliarySupConstants}
	\tilde{a} = \sup_{i, j \in [d] : i \neq j} a_{ij}
	\quad\text{and}\quad
	\tilde{I}^{\nu} = \sup_{i \in [d]} \int_{\Delta_{d-1}^d} (1 - u_{ii}) \nu(dU) \,.
\end{equation}
Thus, there exists a constant $C>0$ such that
\[
	\sum_{i \in [d]} \mathbb{E}\Bigl[ \sup_{0 \leq s \leq t} \abs{\zeta_i(s)} \Bigr] \leq \abs{\zeta(0)} + C \int_0^t \sum_{i \in [d]} \mathbb{E}\Bigl[ \sup_{0 \leq s' \leq s} \abs{\zeta_i(s')} \Bigr] ds \,, \quad t \geq 0 \,.
\]
Applying Gronwall's inequality, we conclude that for all $t\geq0$,
\[
	\sum_{i \in [d]} \mathbb{E}\Bigl[ \sup_{0 \leq s \leq t} \abs{\zeta_i(s)} \Bigr] \leq \abs{\zeta(0)} e^{Ct} \,.
\]
In particular, if $\zeta(0) = 0$, or equivalently $R(0) = \overline{R}(0)$,
\[
	\sum_{i \in [d]} \mathbb{E}\Bigl[ \sup_{0 \leq s \leq t} \abs{ R_i(s) - \overline{R}_i(s) } \Bigr] = 0 \,.
\]
which implies, by the right-continuity of $R$ and $\overline{R}$, that
$\mathbb{P}\left(R(t) = \overline{R}(t)\text{ for all $t\geq0$}\right)=1$.

(ii) We now address the existence of a solution to \eqref{eq:deFinettiSDE}.
For each $n\geq1$, define the set $V_d^n = \bigcup_{i \in [d]} \{U \in \Delta_{d-1}^d : u_{ii} < 1-1/n\}$. Note that by \eqref{eq:nuConstraint},
\[\nu(V_d^n) \leq \sum_{i \in [d]} \nu(1/n < 1 - u_{ii}) \leq n \int_{\Delta_{d-1}^d} \sum_{i \in [d]} (1 - u_{ii}) \nu(dU) < \infty \,. \]
Thus, following the construction in Proposition 2.2 of \cite{FU2010306}, we can build a pathwise unique solution $(R^n(t))_{t\geq0}$ to the following SDE:
\begin{equation} \label{aux_sol}
	dR_i^n(t) = \sum_{j \in [d] \setminus \{i\}} \bigl( a_{ji} R_j^n(t) - a_{ij} R_i^n(t) \bigr) dt
	+ \int_{V_d^n} \sum_{j \in [d] \setminus \{i\}} \bigl( u_{ji} R_{j}^n(t-) - u_{ij} R_i^n(t-) \bigr) N(dt, dU) \,,
\end{equation}
with initial condition $R^n(0)=(r_1,\dots,r_d)$.

Proceeding similarly to \eqref{aux_sol_1}, we obtain
\begin{align*}
	\sum_{i \in [d]} \mathbb{E}\Bigl[ \sup_{0 \leq s \leq t} \abs{ R_i^n(s) } \Bigr]
	 & \leq 1 + 2 (d \tilde{a} + \tilde{I}^{\nu}) \int_0^t \sum_{i \in [d]} \mathbb{E}\Bigl[ \sup_{0 \leq s' \leq s} \abs{R_i^n(s')} \Bigr] ds \\
	 & = 1 + C \int_0^t \sum_{i \in [d]} \mathbb{E}\Bigl[ \sup_{0 \leq s' \leq s} \abs{R_i^n(s')} \Bigr] ds \,,
\end{align*}
with $\tilde{a}$ and $\tilde{I}^{\nu}$ being the constants defined in \eqref{eq:auxiliarySupConstants}.
By Gronwall's inequality, it follows that
\[
	\sum_{i\in[d]}\E\left[ \sup_{0\leq s\leq t}|R_i^n(s)| \right]\leq e^{Ct}.
\]
Fix $n\leq m$ and $T>0$. Then, proceeding as before, for $t\leq T$,
\begin{align*}
	\sum_{i \in [d]} \mathbb{E}\Bigl[ \sup_{0 \leq s \leq t} \abs{ R_i^n(s) - R_i^m(s) } \Bigr]
	 & \leq 2 ( d \tilde{a} + \tilde{I}^{\nu}_n ) \int_0^t \sum_{i \in [d]} \mathbb{E}\Bigl[ \sup_{0 \leq s' \leq s} \abs{R_i^n(s') - R_i^m(s')} \Bigr] ds \\
	 & \quad + 2 \tilde{I}^{\nu}_{n, m} \int_0^t \mathbb{E}\Bigl[ \sup_{0 \leq s' \leq s} \abs{R_i^m(s')} \Bigr] ds                                        \\
	 & \leq C(n, m, T) + C \int_0^t \sum_{i \in [d]} \mathbb{E}\Bigl[ \sup_{0 \leq s' \leq s} \abs{R_i^n(s') - R_i^m(s')} \Bigr] ds \,,
\end{align*}
where
\[
	\tilde{I}^{\nu}_n = \sup_{i \in [d]} \int_{V_d^n} (1 - u_{ii}) \nu(dU)
	\quad \text{and}\quad
	\tilde{I}^{\nu}_{n, m} \sup_{i \in [d]} \int_{V_d^m \setminus V_d^n} (1 - u_{ii}) \nu(dU) \,,
\]
and
\[
	C(n, m , T) := 2 \tilde{I}^{\nu}_{n, m} \int_0^t e^{Cs} ds \,.
\]
Applying Gronwall's inequality again, we deduce that for $\lambda>0$ and any $t \leq T$,
\begin{align*}
	\lim_{n,m\to\infty}\mathbb{P}\left(\sum_{i\in[d]}\sup_{0\leq s\leq t}\left|R_i^n(s)-R^m_i(s)\right|\geq\lambda\right) & \leq \lim_{n,m\to\infty}\frac{1}{\lambda}
	\sum_{i\in[d]}\E\left[\sup_{0\leq s\leq t}\left|R_i^n(s)-R^m_i(s)\right|\right]\notag                                                                             \\&\leq \lim_{n,m\to\infty}C(n,m,T)e^{CT}=0,
\end{align*}
where the last equality follows from the fact that
\begin{equation}\label{uni_con}
	\int_{\Delta_{d-1}^d}\left(\sum_{j \in [d] \setminus \{i\}}u_{ij}\right)\nu(dU)=\int_{\Delta_{d-1}^d}\left(1-u_{ii}\right)\nu(dU)<\infty.
\end{equation}
Then, we can find a subsequence $(n_k)_{k\geq1}$ such that for any $k \in\mathbb{N}$,
\begin{align*}
	\mathbb{P}\left(\sum_{i\in[d]}\sup_{0\leq s\leq t}\left|R_i^{n_{k+1}}(s)-R^{n_k}_i(s)\right|\geq2^{-k}\right)<2^{-k}.
\end{align*}
Thus, by Borel--Cantelli's Lemma there exists a measurable set $N$ such that $\mathbb{P}(N)=0$, and for $\omega\not\in N$ there exists $K(\omega)\in\mathbb{N}$ such that if $k\geq K(\omega)$ then
\begin{align*}
	\sum_{i\in[d]}\sup_{0\leq s\leq t}\left|R_i^{n_{k+1}}(s)-R^{n_k}_i(s)\right|(\omega)\leq2^{-k}.
\end{align*}
Hence $(R^n(\omega))_{n\geq1}$ is a Cauchy sequence in the space of càdlàg functions from $[0,T]$ to $\mathbb{R}$ equipped with the supremum norm.
Hence, there exists $R(\omega)\in\mathbb{D}([0,T])$ such that
\begin{align}\label{uni_conv}
	\lim_{k\to\infty}\sup_{0\leq s\leq T}|R^{n_k}(s)-R(s)|(\omega)=0,\qquad \omega\not\in N.
\end{align}
Hence,  $(R^n(\omega))_{n\geq1}$ converges a.s. to the process $R$ in the space of càdlàg functions from $[0,T]$ to $\mathbb{R}$ equipped with the supremum norm.
Passing to the limit in \eqref{aux_sol} along this subsequence, and using \eqref{uni_con}, we conclude that $R$ satisfies \eqref{eq:deFinettiSDE}.

\paragraph{Proof of the convergence in \texorpdfstring{$D([0, T], \Delta_{d-1})$}{D([0,T],Dd-1)}}

For this part we need some notation.
For any $\bm{r} \in \Delta_{d-1}$ define $\bm{r}^{(n)}$ as the element in $n^{-1} \mathcal{S}(n)$ whose distance to $\bm{r}$ is minimal so $\abs{\bm{r} - \bm{r}^{(n)}} \leq 1/n$.
In case of ties, choose the minimal element with respect to the lexicographical order.
Given this notation, let us make a short heuristic argument which we will formalize afterwards.
If for each $n \in \mathbb{N}$ and each $i \in [d]$ we define
\[
	R_i^{(n)}(t) := \frac{ \#\{ j \in [n] : X_j^{(\infty)}(t) = i\} }{n} \,, t \geq 0 \,.
\]
Then $R^{(n)} = (R_1^{(n)}, \ldots, R_d^{(n)})$  is a Markov process on the set $n^{-1} \mathcal{S}_{d-1}(n)$ whose generator is given by
\begin{align*}
	\mathcal{A}^n f(\bm{r}) = {}
	 & \sum_{i \in [d]} \sum_{j \in [d] \setminus \{i\}} a_{ij} n r_i  \Bigl( f\Bigl( \bm{r} + \frac{1}{n} (e_j - e_i) \Bigr) - f(\bm{r}) \Bigr) \\
	 & + \int_{\Delta_{d-1}^d} \mathbb{E}_{n\bm{r}, U}\Bigl[ f\Bigl(\frac{1}{n} \sum_{i = 1}^d Y^{(i)}\Bigr) - f(\bm{r}) \Bigr] \nu(dU) \,,
\end{align*}
where under $\mathbb{E}_{n \bm{r}, U}$, $Y^{(i)}$ has multinomial distribution with parameters $n r_i$ and $(u_{i1}, \ldots, u_{id})$.
The convergence is obtained by noting that taking $n$ sufficiently large, for $\bm{r} \in \Delta_{d-1}$,
\[
	n \Bigl( f\bigl( \bm{r}^{(n)} + \frac{1}{n} (e_j - e_i) \bigr) - f(\bm{r}^{(n)}) \Bigr) \approx (\partial_j - \partial_i) f(\bm{r}),
\]
and that for every $i \in [d]$,
\[
	\mathbb{E}_{n \bm{r}^{(n)}, U} \Bigl[ f \Bigl( \frac{1}{n} \sum_{i = 1}^d Y^{(i)} \Bigr) - f(\bm{r^{(n)}}) \Bigr]
	\approx f(U^T \bm{r}) - f(\bm{r}) \,,
\]
by a law of large numbers.
We now proceed with the formal argument.

Fix $\bm{r} \in \Delta_{d-1}$.
Let us consider the Markov process $R^{(n)} = (R_1^{(n)}, \ldots, R_d^{(n)})$ with initial state $\bm{r}^{(n)}$ for $\bm{r} \in \Delta_{d-1}$.
We can extend the generator of the process $R^{(n)}$ to $\Delta_{d-1}$ by setting
\begin{align}\label{con_1}
	\mathcal{A}^n f(\bm{r}) = {}
	 & \sum_{i \in [d]} \sum_{j \in [d] \setminus \{i\}} a_{ij} n r_i^{(n)}  \left[ f\left( \bm{r}^{(n)} + \frac{1}{n} (e_j - e_i) \right) - f\left(\bm{r}^{(n)}\right) \right] \\
	 & + \int_{\Delta_{d-1}^d} \mathbb{E}_{n\bm{r}^{(n)}, U}\left[ f\Bigl(\frac{1}{n} \sum_{i = 1}^d Y^{(i)}\Bigr) - f\left(\bm{r}^{(n)}\right) \right] \nu(dU) \,,
\end{align}
where under $\mathbb{E}_{n \bm{r}^{(n)}, U}$, $Y^{(i)}$ has multinomial distribution with parameters $n r_i^{(n)}$ and $(u_{i1}, \ldots, u_{id})$.

By Taylor's theorem for any $f\in\mathcal{C}^2([0,1]^d)$ we have that there exists a constant $C(f)>0$, depending only on $f$, such that
\begin{align}\label{con_2}
	B_1^n(i,j) & := \Bigg|f\left( \bm{r}^{(n)} + \frac{1}{n} (e_j - e_i) \right) - f\left(\bm{r}^{(n)}\right)-f\left( \bm{r} + \frac{1}{n} (e_j - e_i) \right) + f\left(\bm{r}\right)\Bigg| \notag \\
	           & \leq C(f)\left(\left|r_i^{(n)}-r_i\right|^2+\left|r_j^{(n)}-r_j\right|^2+2\left|r_i^{(n)}-r_i\right|\left|r_j^{(n)}-r_j\right|\right)\notag                                       \\
	           & \leq \frac{C_1(f)}{n^2},
\end{align}
where $C_1(f)>0$ is a constant depending only of $f$.
Using a similar argument we can find, by Taylor's Theorem, a constant $C_2(f)>0$ such that
\begin{align}\label{con_3}
	B_2^n(i,j):=\Bigg|f\left( \bm{r} + \frac{1}{n} (e_j - e_i) \right) + f\left(\bm{r}\right)-\frac{1}{n}(\partial_j - \partial_i) f(\bm{r})\Bigg|\leq \frac{C_2(f)}{n^2}.
\end{align}
The fact that $\bm{r}\in\Delta_{d-1}$ implies that there exists a constant $C_4(f)$ such that
\begin{align}\label{con_4}
	B_3^n(i,j) & :=\Bigg|\left(r_i^{(n)}-r_i\right)  \Bigg[ f\left( \bm{r}^{(n)} + \frac{1}{n} (e_j - e_i) \right) - f\left(\bm{r}^{(n)}\right) \Bigg]\Bigg| \notag \\
	           & \leq C_4(f)\bigl|r_i^{(n)}-r_i\bigr|\left(\left|r_i^{(n)}-r_i\right|+\left|r_j^{(n)}-r_j\right|\right)\notag                                       \\
	           & \leq 2\frac{C_4(f)}{n^2}.
\end{align}
Hence, using \eqref{con_2}, \eqref{con_3}, and \eqref{con_4} gives
\begin{align}\label{con_5} \MoveEqLeft
	\Bigg|\sum_{i \in [d]} \sum_{j \in [d] \setminus \{i\}} a_{ij} n r_i^{(n)}  \left[ f\left( \bm{r}^{(n)} + \frac{1}{n} (e_j - e_i) \right) - f\left(\bm{r}^{(n)}\right) \right]-\sum_{i \in [d]} \sum_{j \in [d] \setminus \{i\}} a_{ij}r_i(\partial_j - \partial_i) f(\bm{r})\Bigg|\notag \\
	 & \leq \sum_{i \in [d]} \sum_{j \in [d] \setminus \{i\}} a_{ij}n(B_1^n(i,j)+B_2^n(i,j)+B_3^n(i,j))\notag                                                                                                                                                                                 \\
	 & \leq  \frac{C_5(f)}{n}\sum_{i \in [d]} \sum_{j \in [d] \setminus \{i\}} a_{ij},
\end{align}
for some constant $C_5(f)>0$ that only depends on $f$.

Now for the integral term in \eqref{con_1}, we have by Taylor's Theorem,
\begin{align*}
	\mathbb{E}_{n\bm{r}^{(n)}, U} & \left[ f\Bigl(\frac{1}{n} \sum_{i = 1}^d Y^{(i)}\Bigr) - f\left(U^T \bm{r}^{(n)}\right) \right] = \sum_{j=1}^d \partial_j f\left(U^T \bm{r}^{(n)}\right)\mathbb{E}_{n\bm{r}^{(n)}, U}\left[ \frac{1}{n} \sum_{i = 1}^d Y^{(i)} - U^T \bm{r}^{(n)} \right]\notag \\
	                              & +\sum_{|\beta|=2}R_\beta\left(U^T \bm{r}^{(n)}\right)\mathbb{E}_{n\bm{r}^{(n)}, U}\left[\left(\frac{1}{n} \sum_{i = 1}^d Y^{(i)}-U^T \bm{r}^{(n)}\right)^\beta\right],
\end{align*}
where $\beta \in\{0,1,2\}^2$ and $R_\beta$ satisfies that there exists a constant $C_{\beta}>0$ such that
\[
	R_\beta\left(\bm{x}\right)<C_{\beta},\qquad \text{for $\bm{x}\in\mathbb{R}^d$ such that $|x_i|\leq 1$ for $i=1,\dots,d$.}
\]
The fact that $Y^{(i)}$ has multinomial distribution with parameters $r_i^{(n)}$ and $(u_{i1}, \ldots, u_{id})$, implies that
\[
	\mathbb{E}_{n\bm{r}^{(n)}, U}\left[ \frac{1}{n} \sum_{i = 1}^d Y^{(i)}-U^T\bm{r}^{(n)} \right]=0.
\]
Thus, we can find a constant $K>0$ such that
\begin{align}\label{con_6}
	\Bigg|\mathbb{E}_{n\bm{r}^{(n)}, U} & \left[ f\Bigl(\frac{1}{n} \sum_{i = 1}^d Y^{(i)}\Bigr) - f\left(U^T\bm{r}^{(n)}\right) \right]\Bigg|
	\leq K\sum_{j=1}^d\mathbb{E}_{n\bm{r}^{(n)}, U}\left[ \left(\frac{1}{n} \sum_{i = 1}^d Y^{(i)}- U^T \bm{r}^{(n)}\right)_j^2 \right] \notag                                                                                                 \\
	                                    & +K\sum_{k\not=j}\mathbb{E}_{n\bm{r}^{(n)}, U}\left[ \left(\frac{1}{n} \sum_{i = 1}^d Y^{(i)}-U^T\bm{r}^{(n)}\right)_j\left(\frac{1}{n} \sum_{i = 1}^d Y^{(i)}-U^T\bm{r}^{(n)}\right)_k \right]\notag \\
	                                    & \leq \frac{K}{n^2}\left[\sum_{i=1}^d\sum_{j=1}^d u_{ij}(1-u_{ij}) r_i^{(n)} + \sum_{i=1}^d\sum_{k\not=j}u_{ij}u_{ik} r_i^{(n)}\right]\notag                                                          \\
	                                    & \leq 2\frac{K}{n}\sum_{i=1}^d\sum_{j=1}^d u_{ij}(1-u_{ij}).
\end{align}
On the other hand, proceeding like in \eqref{con_2},
\begin{align}\label{con_7}
	\Bigg|f\left(U^T \bm{r}^{(n)}\right) & -f\left(\bm{r}^{(n)}\right)-f(U^T\bm{r})+f(\bm{r})\Bigg|\leq D(f)\sum_{i=1}^d\left|\sum_{j=1}^d u_{ij} r_j^{(n)} - r_i^{(n)} -\sum_{j=1}^du_{ij}\bm{r_j}+\bm{r_i}\right|\notag \\
	                                     & \leq D(f)\sum_{i=1}^d\left|\sum_{j\not=i}u_{ij}\left(r_j^{(n)}-r_j\right)+(u_{ii}-1)\left(r_i^{(n)}-r_i\right)\right|\notag                                                    \\&\leq\frac{2D(f)}{n}\sum_{i=1}^d(1-u_{ii}),
\end{align}
where $D(f)>0$ is a constant dependent only on $f$.

Hence, applying \eqref{con_6} and \eqref{con_7}, gives
\begin{align}\label{con_8}
	\int_{\Delta_{d-1}^d}\Bigg| \mathbb{E}_{n \bm{r}^{(n)}, U} & \left[ f\Bigl(\frac{1}{n} \sum_{i = 1}^d Y^{(i)}\Bigr) - f\left(\bm{r}^{(n)}\right) \right]-f(U^T\bm{r})+f(\bm{r})\Bigg| \nu(dU)\notag \\&\leq \frac{K(f)}{n}\sum_{i=1}^d\int_{\Delta_{d-1}^d}(1-u_{ii})\nu(dU),
\end{align}
where the integral in the last inequality is finite by \eqref{eq:nuConstraint}.

Finally, using \eqref{con_5} together \eqref{con_8}, we obtain for every $\bm{r}\in\Delta_{d-1}$,
\begin{align*}
	\lim_{n\to\infty}\left|\mathcal{A}^nf(\bm{r})-\mathcal{A}f(\bm{r})\right|=0.
\end{align*}

\paragraph{Proof of Proposition \ref{prop:duality}}
By the assumption we made on $A$ and $\nu,$ we have that if $f_n(r) = r^n$ for $r \in \Delta_{d-1}$ and a given $n \in \mathbb{N}_0^d$, then
\begin{align*} \MoveEqLeft
	\sum_{i \in [d]} \sum_{j \in [d] \setminus \{i\}} n_i (a_{ji} r^{n+e_j-e_i} - a_{ij} r^n)   \\
	 & = \sum_{i \in [d]} \sum_{j \in [d] \setminus \{i\}} n_i a_{ji} (r^{n + e_j - e_i} - r^n)
	+ \sum_{i \in [d]} \sum_{j \in [d] \setminus \{i\}} n_i (a_{ji} - a_{ij}) r^n
\end{align*}
Now,
\[
	\sum_{i \in [d]} \sum_{j \in [d] \setminus \{i\}} n_i (a_{ji} - a_{ij}) r^n = 0
\]
for all values of $r$ and $n$ if and only if $\sum_{j \in [d] \setminus \{i\}} (a_{ij} - a_{ji}) = 0$ for all $i \in [d]$.

On the other hand, observe that for a given $U \in \Delta_{d - 1}^d$ and $n \in \mathbb{N}_0^d$,
\begin{align}\label{md_1}
	(U^T r)^n
	 & = \prod_{i = 1}^d \Bigl( \sum_{j = 1}^d u_{ji} r_j \Bigr)^{n_i}
	= \prod_{i = 1}^d \sum_{k_i \in \mathcal{S}_{d-1}(n_i)} \binom{n_i}{k_i} \prod_{j = 1}^d u_{ji}^{k_{ij}} r_j^{k_{ij}}                                                                     \\
	 & = \sum_{K \in \mathcal{S}_{d-1,d}(n)} \Bigl( \prod_{i = 1}^d \binom{n_i}{k_i} \prod_{j = 1}^d u_{ji}^{k_{ij}} \Bigr) \prod_{j = 1}^d r_{j}^{\sum_{i = 1}^d k_{ij}}                     \\
	 & = \sum_{K \in \mathcal{S}_{d-1,d}(n)} \Bigl( \prod_{i = 1}^d \binom{n_i}{k_i} \prod_{j = 1}^d u_{ji}^{k_{ij}} \Bigr) \Bigl( \prod_{j = 1}^d r_{j}^{\sum_{i = 1}^d k_{ij}} - r^n \Bigr)
	+ r^n \prod_{i = 1}^d \Bigl( \sum_{j = 1}^d u_{ji} \Bigr)^{n_i} \,,
\end{align}
where we recall that $K = (k_1, \ldots, k_d)$ and $k_i = (k_{i1}, \ldots, k_{i d})$.

Hence,
\begin{align}\label{md_2}
	(U^T r)^n - r^n
	= \sum_{K \in \mathcal{S}_{d-1}(n)} \Bigl( \prod_{i = 1}^d \binom{n_i}{k_i} \prod_{j = 1}^d u_{ji}^{k_{ij}} \Bigr) \Bigl( \prod_{j = 1}^d r_{j}^{\sum_{i = 1}^d k_{ij}} - r^n \Bigr)
	+ r^n \Bigl( \prod_{i = 1}^d \Bigl( \sum_{j = 1}^d u_{ji} \Bigr)^{n_i} - 1 \Bigr) \,.
\end{align}
Note that
\[
	r^n \Bigl( \prod_{i = 1}^d \Bigl( \sum_{j = 1}^d u_{ji} \Bigr)^{n_i} - 1 \Bigr) = 0
\]
for each $r^n$
and $n \in \mathbb{N}_0^d$ if and only if $\sum_{j = 1}^d u_{ji} = 1$.

By the previous computations we deduce that, under the assumptions we made, an application of \eqref{eq:generatorFinetti} to $f_n$ together with identities \eqref{md_1} and \eqref{md_2}, gives us
\begin{align*}
	\mathcal{A} f_n(r)
	 & = \sum_{i \in [d]} \sum_{j \in [d] \setminus \{i\}} n_i a_{ji} (r^{n + e_j - e_i} - r^n)
	+ \sum_{K \in \mathcal{S}_{d-1,d}(n)} \prod_{i = 1}^d \binom{n_i}{k_i} \widehat{\gamma}_{n, K} \Bigl( \prod_{j = 1}^d r_{j}^{\sum_{i = 1}^d k_{ij}} - r^n \Bigr) \,,
\end{align*}
where
\[
	\widehat{\gamma}_{n, K}
	= \int_{\Delta_{d-1}^d} \prod_{i = 1}^d \prod_{j = 1}^d u_{ij}^{k_{ji}} \mathbf{T}^{\mathrm{transp}}\nu(dU)
\]
and $\mathbf{T}^{\mathrm{transp}}\nu(dU)$ is the pushforward measure of $\nu$ under the mapping $U \mapsto U^T$.
Thus, we have proved that generator duality holds true.  To pass from here to the moment duality, one proves that $R$ is a Feller process, and applies Proposition 3.1 in \cite{jansenNotionDualityMarkov2014} or Theorem 3.42 in \cite{liggettContinuousTimeMarkov2010}. We leave out the details of this step. \hfill $\Box$

\section{Construction of array satisfying (\ref{eq:exchArray1})}
\label{app:PExArray}

In this appendix we will carry out the construction of an array $Z^{(ij)} = (Z_{l \ell}^{(ij)})_{l \in [d], \ell \in \mathbb{N}}$ that satisfies equation \eqref{eq:exchArray1}.
To ease notation, we drop throughout this appendix the superindexes; hence, we will write $Z_{l\ell}$, and \eqref{eq:exchArray1} becomes
\[
	\mathbb{P}'\Bigl( \bigcap_{l \in [d] : b_l > 0} \bigcap_{\ell \in [b_l]} \{Z_{l\ell} = x_{\ell}^{(l)} \} \Bigr) = \tilde{\gamma}_{b, K} \,,
\]
where $\tilde{\gamma} = (\tilde{\gamma}_{b, K} : b \in \mathbb{N}_0^d, K \in \mathcal{S}_{d-1,d}(b))$ is an array satisfying that $\tilde{\gamma}_{0, 0} = 1$ and
\[
	\tilde{\gamma}_{b, K} = \sum_{\ell \in [d]} \gamma_{b + e_l, K + E_{l\ell}}
	\quad\text{for any } l \in [d] \,.
\]

For the construction we will consider $(\Omega', \mathbb{P}') = ([0, 1), \lambda)$, where $[0, 1)$ is taken with the Borel sets and $\lambda$ is the Lebesgue measure.
Moreover, let $V$ be defined on $\Omega'$ and having uniform distribution, so $\mathbb{P}'(V \in B) = \lambda(B)$ for any Borel set $B$.
We will now construct a collection of Borel sets $\mathfrak{B} = (B_x : x \in \{0\} \cup \bigcup_{n \in \mathbb{N}} [d]^n)$ inductively on $n$.
In order to do this we will use the map $r : \mathbb{N} \to [d]$, where $r(n) = d$ if $n$ is divisible by $d$ and the remainder of $n$ when divided by $d$ otherwise.
Additionally let us define $\mathfrak{v} : \mathbb{N} \to \mathbb{N}^d$ and $\mathfrak{K} : \bigcup_{n \in \mathbb{N}} (\{n\} \times [d]^n) \to \mathbb{N}_0^{d \times d}$ respectively by
\[
	\mathfrak{v}(n) = \sum_{m = 1}^n e_{r(m)}
	\quad\text{and}\quad
	\mathfrak{K}(n, x) = \sum_{m = 1}^n E_{r(m) x_m}
\]
for $n \in \mathbb{N}$ and $x \in [d]^n$.
For a set $A \subset \mathbb{R}$ and a number $a \in \mathbb{R}$ we will use the notation $a + A$ to denote the set $\{a + b : b \in A\}$.

The construction of $\mathfrak{B}$ is as follows:
\begin{enumerate}
	\item Put $B_0 = [0, 1)$, and note that it has length $\tilde{\gamma}_{0, 0}$.
	\item For $n = 1$, put $B_i = \sum_{j = 1}^{i-1} \tilde{\gamma}_{e_1, E_{1j}} + [0 , \tilde{\gamma}_{e_1, E_{1i}})$ for $i \in [d]$, and note that $B_i$ has length $\tilde{\gamma}_{e_1, E_{1i}}$.
	\item Suppose that $B_{x}$ has been defined for $x \in \{0\} \cup \bigcup_{m = 1}^n [d]^m$ for some $n \in \mathbb{N}$.
	      For any $y \in [d]^{n + 1}$ put
	      \[
		      B_{y} := \Bigl(\inf B_{R_n y} + \sum_{j = 1}^{y_{n + 1} - 1} \tilde{\gamma}_{\mathfrak{v}(n + 1), \mathfrak{K}(n + 1, y + (j - y_{n + 1}) e_{n + 1})} \Bigr) + [0, \tilde{\gamma}_{\mathfrak{v}(n + 1), \mathfrak{K}(n + 1, y)}) \,.
	      \]
	      By construction $B_y$ has length $\tilde{\gamma}_{\mathfrak{v}(n + 1), \mathfrak{K}(n + 1, y)}$.
\end{enumerate}
Note that by the consistency hypothesis on $\tilde{\gamma}$, $(B_x : x \in [d]^n)$ is a partition of $[0, 1)$ for any $n \in \mathbb{N}$, and in fact $(B_x : x \in [d]^{n + 1})$ is a partition nested in $(B_x : x \in [d]^n)$.
Indeed, it is clear that $(B_x : x \in [d])$ is a partition of $[0, 1)$.
Now suppose that $(B_x : x \in [d]^n)$ is a partition of $[0, 1)$.
Note that for a given $x \in [d]^n$, if $y \in [d]^{n + 1}$ is such that $R_n y = x$, then
\[
	B_y = \inf B_{x} + \Bigl[ \sum_{j = 1}^{y_{n + 1} - 1} \tilde{\gamma}_{\mathfrak{v}(n + 1), \mathfrak{K}(n + 1, y + (j - y_{n + 1}) e_{n + 1})}, \sum_{j = 1}^{y_{n + 1}} \tilde{\gamma}_{\mathfrak{v}(n + 1), \mathfrak{K}(n + 1, y + (j - y_{n + 1}) e_{n + 1})} \Bigr) \,,
\]
As a consequence, if $y' \in [d]^{n+1}$ is another vector such that $R_n(y') = x$ and $y' \neq y$, this last equality implies $B_y \cap B_{y'} = \varnothing$.
Now, if for a vector $x \in [d]^n$ and a number $l \in [d]$ we understand $(x, l)$ as a vector in $[d]^{n + 1}$, we obtain that
\begin{align*}
	\bigcup_{y \in [d]^{n + 1} : R_n y = x} B_y
	 & = \bigcup_{l = 1}^d B_{(x, l)}
	= \inf B_x + \bigcup_{l = 1}^d \Bigl[ \sum_{j = 1}^{l - 1} \tilde{\gamma}_{\mathfrak{v}(n + 1), \mathfrak{K}(n + 1, (x, j))}, \sum_{j = 1}^{l} \tilde{\gamma}_{\mathfrak{v}(n + 1), \mathfrak{K}(n + 1, (x, j))}\Bigr) \\
	 & = \inf B_x + \Bigl[ 0, \sum_{j = 1}^d \tilde{\gamma}_{\mathfrak{v}(n + 1), \mathfrak{K}(n + 1, (x, j))} \Bigr)\,.
\end{align*}
By the consistency condition of $\tilde{\gamma}$, $\sum_{j = 1}^d \tilde{\gamma}_{\mathfrak{v}(n + 1), \mathfrak{K}(n + 1, (x, j))} = \tilde{\gamma}_{\mathfrak{v}(n), \mathfrak{K}(n, x)}$, and hence
\[
	\bigcup_{y \in [d]^{n + 1} : R_n y = x} B_y = \inf B_x + [0, \tilde{\gamma}_{\mathfrak{v}(n), \mathfrak{K}(n, x)}) = B_x \,.
\]
This in particular implies that if $y$ and $y'$ are distinct elements in $[d]^{n + 1}$, then $B_y \cap B_{y'} = \varnothing$.
The case where $R_n y = R_n(y')$ was already considered, and the other case follows as we supposed that $(B_x : x \in [d]^n)$ is a partition, $B_y \subset B_{R_n y}$, $B_{y'} \subset B_{R_n(y')}$ and $B_{R_n y} \cap B_{R_n(y')} = \varnothing$.
This proves, by induction, that $(B_x, x \in [d]^{n + 1})$ is a partition of $[0, 1)$, which we have seen that is nested in $(B_x, x \in [d]^n)$.

Using the collection $\mathfrak{B}$ and the random variable $V$ we will define an auxiliary sequence of random variables $(\tilde{Z}_l)_{l \in \mathbb{N}}$ that take values in $[d]$ through their finite dimensional distributions.
For any $n \in \mathbb{N}$ and any $x \in [d]^n$ set $\tilde{Z}_1 = x_1, \ldots, \tilde{Z}_n = x_n$ if and only if $V \in B_x$.
With this definition we note that
\begin{equation}\label{aux_2}
	\mathbb{P}'\Bigl( \bigcap_{l = 1}^n \{\tilde{Z}_l = x_l\} \Bigr) = \tilde{\gamma}_{\mathfrak{v}(n), \mathfrak{K}(n, x)} \,.
\end{equation}
Finally, we define the array $Z$ in terms of the sequence $\tilde{Z}$ by putting
\[
	Z_{l\ell} = \tilde{Z}_{l + (\ell - 1) d} \quad\text{for all $l \in [d]$ and $\ell \in \mathbb{N}$.}
\]
By using the consistency hypothesis of $\tilde{\gamma}$ together with the additivity of $\mathbb{P}'$ it is straightforward to check that $Z$ satisfies \eqref{eq:exchArray1}.

\bibliographystyle{amsalpha}
\bibliography{bibi.bib}

\end{document}